\documentclass[11pt,reqno]{amsart}
\usepackage{a4wide}
\usepackage{hyperref,amsmath,amsfonts,mathscinet,amssymb,amsthm}
\usepackage{accents}
\usepackage{dirtytalk}
\usepackage{paralist}
\usepackage{constants}
\usepackage{xcolor}
\usepackage{tikz-cd}
\usepackage{todonotes}
\usepackage[mathscr]{euscript}
\usepackage{enumitem}
\usepackage{mathtools}
\usepackage{etoolbox}
\usepackage{cite}

\usepackage{mathtools}

\theoremstyle{plain}
\newtheorem{thm}{Theorem}[section]
\newtheorem{lemma}[thm]{Lemma}
\newtheorem{cor}[thm]{Corollary}
\newtheorem{prop}[thm]{Proposition}

\theoremstyle{definition}
\newtheorem{rem}[thm]{Remark}

\newtheorem{example}[thm]{Example}
\newtheorem{defn}[thm]{Definition}

\expandafter\let\expandafter\oldproof\csname\string\proof\endcsname
\let\oldendproof\endproof
\renewenvironment{proof}[1][\proofname]{%
  \oldproof[\bf #1]%
}{\oldendproof}

\numberwithin{equation}{section}

\newcommand{\abs}[1]{\left|{#1}\right|}
\newcommand{\norm}[1]{\left\lVert{#1}\right\rVert}

\newcommand{\R}{\mathbb{R}}
\newcommand{\Co}{\mathbb{C}}
\newcommand{\N}{\mathbb{N}}
\newcommand{\M}{\mathcal{M}}

\DeclareMathOperator*{\loc}{loc}
\DeclareMathOperator*{\supp}{supp}
\DeclarePairedDelimiter\ceil{\lceil}{\rceil}

\newcommand{\glob}{\operatorname{glob}}
\newcommand{\GWL}{\mathscr{WL}}
\newcommand{\WL}{\mathit{WL}}
\newcommand{\fat}{\operatorname{Fat}}

\DeclareFontFamily{U}{matha}{\hyphenchar\font45}
\DeclareFontShape{U}{matha}{m}{n}{
      <5> <6> <7> <8> <9> <10> gen * matha
      <10.95> matha10 <12> <14.4> <17.28> <20.74> <24.88> matha12
      }{}
\DeclareSymbolFont{matha}{U}{matha}{m}{n}
\DeclareFontSubstitution{U}{matha}{m}{n}

\DeclareFontFamily{U}{mathx}{\hyphenchar\font45}
\DeclareFontShape{U}{mathx}{m}{n}{
      <5> <6> <7> <8> <9> <10>
      <10.95> <12> <14.4> <17.28> <20.74> <24.88>
      mathx10
      }{}
\DeclareSymbolFont{mathx}{U}{mathx}{m}{n}
\DeclareFontSubstitution{U}{mathx}{m}{n}

\DeclareMathDelimiter{\vvvert}{0}{matha}{"7E}{mathx}{"17}
\DeclarePairedDelimiterX{\normi}[1]
  {\vvvert}
  {\vvvert}
  {\ifblank{#1}{\:\cdot\:}{#1}}

\begin{document}

\title{Wiener--Luxemburg amalgam spaces revisited}

\author{Ivan Kotal\'ik}

\email[I.~Kotal\'ik]{ivan.kotalik690@student.cuni.cz}
\urladdr{0009-0009-0128-2111}

\address{
 Department of Mathematical Analysis,
	Faculty of Mathematics and Physics,
	Charles University,
	Sokolovsk\'a~83,
	186~75 Praha~8,
	Czech Republic}

\subjclass[2020]{46E30, 46A16}
\keywords{absolute continuity of the norm, associate spaces, embedding theorems, extremal embeddings, nonincreasing rearrangement, quasi-Banach function spaces, rearrangement-invariant spaces, weak Fatou property, Wiener--Luxemburg amalgam space}

\date{\today}

\maketitle

\section*{Abstract}

The fundamental reliance on nonincreasing rearrangements restricts the recently introduced Wiener--Luxemburg amalgam spaces only to the rearrangement-invariant setting. To overcome this barrier, we develop a novel comprehensive framework that expands these amalgam structures to arbitrary quasi-Banach function spaces. By constructing a new quasinorm based on measure-constrained suprema and infima, we successfully separate these amalgams from rearrangement theory. We resolve the ensuing structural challenges regarding the Fatou property in this generalized setting, show consistency with the original theory, and fully characterize associate spaces, continuous embeddings, and the absolute continuity of the norm.

\section{Introduction}
In various areas of mathematical analysis, the need has arisen to separately study the local blow-ups and the global decay of functions. Recently, the concept of Wiener--Luxemburg amalgam spaces has been proposed to satisfy this need in the context of Banach function spaces in~\cite{Amalgam}. However, this construction allows only for rearrangement-invariant spaces. For example, this poses problems if one wishes to work with spaces where the distribution of mass matters, e.g., weighted Lebesgue spaces. In this paper, we introduce a novel approach to these spaces, developing a comprehensive framework to overcome this limitation. Our proposed definition is inspired by the work of Z.~Mihula and M.~P\'{a}ndy~\cite{mihulapandy}, who utilize a similar construction to introduce a new notion of almost compactness \say{near infinity}. Consider two arbitrary quasi-Banach function spaces $A$, $B$. We redefine the Wiener--Luxemburg amalgam space as
\begin{equation*}
    \norm{f}_{\GWL(A,B)}:=\sup_{\mu(E)=1} \norm{f\chi_E}_A + \inf_{\mu(E)=1} \norm{f\chi_{R\setminus E}}_B,
\end{equation*}
where $f$ is a measurable function and $(R,\mu)$ is a $\sigma$-finite infinite measure space. We subsequently demonstrate that, under reasonable assumptions, this functional possesses the requisite properties. To establish our framework, we draw extensively upon recent advances in the theory of quasi-Banach function spaces. In particular, we rely on the two papers~\cite{MuNePeTu} and~\cite{Pesaquasi}. We believe that this theory may be utilized to obtain new insights about quasi-Banach function spaces. For example, the methods and techniques introduced in this paper have already been used in the closely related paper~\cite{pesa2026amalgamapproachcompactnessquasibanach} to provide new characterizations of compactness in quasi-Banach function spaces. \par
To fully appreciate the scope of this extension, we recall the original construction introduced in~\cite{Amalgam}. For two rearrangement-invariant Banach function spaces $A$, $B$ defined over the real half-line, the original Wiener--Luxemburg amalgam space is defined by the quasinorm
\begin{equation*}
    \norm{f}_{\WL(A,B)}:=\norm{f^*\chi_{[0,1)}}_A + \norm{f^*\chi_{[1,\infty)}}_B,
\end{equation*}
where $f^*$ denotes the nonincreasing rearrangement of a measurable function $f$. Although this formulation elegantly separates the local and global behavior of a function, its fundamental reliance on rearrangements limits the theory to rearrangement-invariant spaces. By replacing this rearrangement-based approach with suprema and infima over sets of unit measure, our framework not only recovers the classical definition in the rearrangement-invariant setting (as we will demonstrate) but successfully extends these concepts to broader contexts, such as weighted Lebesgue spaces, where nonincreasing rearrangements cannot be meaningfully applied.  \par
These spaces, along with their underlying theoretical concepts, have found broad application in various mathematical problems. They were used to make a counterexample regarding $p$-homogeneous spaces in~\cite[Proposition 2.11]{BOZA2026130469}. A.~Alberico, A.~Cianchi, L.~Pick, and L.~Slav\'ikov\'a have used them while analyzing optimal Sobolev-type embeddings across the whole Euclidean space within the framework of rearrangement-invariant Banach function spaces in~\cite{MR3809140}. J. Vyb\'iral has used them for the identification of the optimal target space for the limiting scenario of standard Sobolev embeddings over the full Euclidean space. A similar need arose while investigating generalized Lorentz--Zygmund spaces; this prompted the creation of broken logarithmic functions, enabling one to handle the local and global characteristics of functions independently. Readers seeking a thorough examination of these generalized Lorentz--Zygmund spaces should consult~\cite{MR1698383}. Additionally, this methodology has seen successful application within interpolation theory. In this area, characterizing the sums and intersections of spaces essentially as amalgams has demonstrated significant utility, as detailed in~\cite{MR3776083, MR576995, tomaszewski}. \par 
Historically, the concept of amalgam spaces can be traced all the way back to the now a century old paper~\cite{Wiener} by N.~Wiener. Since then, many different versions of amalgam spaces have been developed; notable examples include those introduced by F.~Holland~\cite{Holland} and H. G. Feichtinger~\cite{feichtinger1980banach}. Over recent decades, many iterations of these spaces have been utilized extensively across numerous fields as demonstrated in these articles ~\cite{MR788385, MR1062738, MR1152343, MR1503035, MR983891, MR435739, MR1145171, MR594411, MR116186, MR208277, MR602764}. \par

This paper has the following structure. In Section~\ref{section_prelims}, we provide a comprehensive summary of the necessary theory that is used throughout the rest of the paper. \par

In Section~\ref{weak_fatou_sec}, we deal with functionals that are almost a quasi-Banach function norm but do not have the strong Fatou property, which is usually referred to as just the Fatou property. We show that the situation is still salvageable if the functional at least has the weak Fatou property. Our main contribution is that we transfer existing results to the context of quasi-Banach function norms. These results are crucial for showing that our amalgams are quasi-Banach function spaces. \par

Section~\ref{amalgam_sec} serves as the main part of this text. We introduce the new definition for the Wiener--Luxemburg amalgam space. We investigate its basic properties, and we use the theory from Section~\ref{weak_fatou_sec} to show that our amalgam functional defines a quasi-Banach function space. Furthermore, we show that our definition is consistent with the original definition of the Wiener--Luxemburg amalgam space in the sense that if we work with rearrangement-invariant spaces, we obtain equivalent quasinorms. Next, we follow in the footsteps of~\cite{Amalgam} and study the associate space of our amalgam spaces, and, as a corollary, we find that our quasinorm produces a normable topology if we start with normed spaces. We fully characterize embeddings between our amalgam spaces, and we study extremal embeddings into the spaces $L^1$ and $L^\infty$. While the fundamental conclusion is the same as in~\cite{Amalgam}, the method to obtain it is significantly more involved. Moreover, we use these results to generalize the classical theorem that all rearrangement-invariant Banach function spaces lie in between the spaces $L^1\cap L^\infty$ and $L^1+L^\infty$ to general quasi-Banach function spaces. Lastly, we prove a characterization of the absolute continuity of the norm in our amalgam spaces and transfer the result back to the rearrangement-invariant amalgams. \par

\section{Preliminaries}\label{section_prelims}

This section establishes the basic theory and important results upon which we build the revised theory of Wiener--Luxemburg amalgam spaces. \par

We start with introducing some elementary notation. By $(R,\mu)$ or $(S,\nu)$ we denote an arbitrary (totally) $\sigma$-finite measure space. For a $\mu$-measurable set $E\subset R$, we denote its characteristic function as $\chi_E$. By $\M(R,\mu)$ we denote the set of all extended complex-valued $\mu$-measurable functions defined on $R$. In this set, we identify all functions that are equal $\mu$-almost everywhere. Furthermore, we denote the set of all nonnegative functions in $\M(R,\mu)$ as $\M_+(R,\mu)$ and the set of functions that are finite $\mu$-almost everywhere in $\M(R,\mu)$ as $\M_0(R,\mu)$. For better readability, we often shorten $\mu$-almost everywhere to just \say{a.e.}~when the measure used is clear from the context. In the particular case of the $n$-dimensional Lebesgue measure, we denote it as $\lambda^n$ or $\lambda$ in the one-dimensional case. \par

For two topological linear spaces $X$, $Y$, we denote by $X\hookrightarrow Y$ the fact that  $X\subset Y$ and the identity mapping $I\colon X \to Y$ is continuous, and we say that $X$ is continuously embedded into $Y$. \par
For an arbitrary function $f$ and some set $E$ in the domain of $f$, we denote the restriction of $f$ to $E$ as $f|_E$, and we denote the support of $f$ as $\supp f$, which we define as
\begin{equation*}
    \supp f := \left \{ x \in R: \left \lvert \overline{f} \right \rvert > 0 \right \},
\end{equation*}
where $\overline{f}$ is an arbitrary representative of $f$. Our arguments will at no point depend in any way on the choice of this representative. In particular, we always have $f = 0$ $\mu$-a.e.~on $R \setminus \supp f$.

\subsection{Norms and quasinorms}

In this subsection, we recall the definitions and essential properties of norms and quasinorms as they are the fundamental building blocks for everything that we study in this text.

\begin{defn}
    Let $X$ be a complex linear space. A functional ${\norm{\cdot}}\colon X \to [0,\infty)$ is called a \textsl{quasinorm} if it has the following properties for all $a\in \Co$ and $x,y\in X$:
    \begin{enumerate}[label=$(\roman*)$]
        \item $\norm{ax}=\abs{a}\norm{x}$,
        \item $\norm{x}=0$ if and only if $x=0$ in $X$,
        \item there exists a constant $C\geq 1$ such that $\norm{x+y}\leq C(\norm{x}+\norm{y})$.
    \end{enumerate}
    The least such constant in $(iii)$ is called the \textsl{modulus of concavity} of $\norm{\cdot}$, and we denote it as $C_{\norm{\cdot}}$ or $C_X$ if we associate the norm with some space $X$. If it is that $C_{\norm{\cdot}}=1$, we call the functional $\norm{\cdot}$ a \textsl{norm}.
\end{defn}

If two quasinorms are equivalent in the usual sense, we write $\norm{\cdot}_1\approx\norm{\cdot}_2$. To denote the norm of a bounded operator between quasinormed spaces  we write $\norm{T}_{X\to Y}$. \par

Obviously, a norm produces a metrizable topology on the linear space $X$. The case is not so simple when we speak about quasinorms. Luckily, the topology is metrizable due to the classical Aoki--Rolewicz theorem. For more information on this theorem and its proof, see~\cite[Theorem 5]{Aoki} or~\cite{Rolewicz}. \par

Now, we would like to mention a critical observation regarding the triangle inequality. It will allow us to handle sums within a quasinorm more easily. This observation has been used before in~\cite[Lemma 3.1]{Pesaquasi} and~\cite{maligrandatypecotype}.

\begin{prop}\label{nekvindas_trick}
    Let $\norm{\cdot}_X$ be a quasinorm on some complex linear space $X$. Let $x_n\in X$ be a sequence of points. Then
    \begin{equation*}
        \norm{\sum_{n=1}^N x_n}_X\leq \sum_{n=1}^N C_{\norm{\cdot}}^n \norm{x_n}_X.
    \end{equation*}
\end{prop}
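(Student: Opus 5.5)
We argue by induction on $N$, peeling off the first summand at each step. Write $C=C_{\norm{\cdot}}\geq 1$. For $N=1$ the claim reads $\norm{x_1}_X\leq C\norm{x_1}_X$, which holds because $C\geq 1$.

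For the induction step, assume the inequality holds for every family of $N-1$ points (with $N\geq 2$). Split the sum as $x_1+\sum_{n=2}^N x_n$ and apply the quasi-triangle inequality $(iii)$ once:
\begin{equation*}
    \norm{\sum_{n=1}^N x_n}_X\leq C\norm{x_1}_X + C\norm{\sum_{n=2}^N x_n}_X .
\end{equation*}
Apply the induction hypothesis to the $N-1$ points $y_k:=x_{k+1}$, $k=1,\dots,N-1$. This gives
\begin{equation*}
    \norm{\sum_{n=2}^N x_n}_X\leq \sum_{k=1}^{N-1} C^k\norm{x_{k+1}}_X=\sum_{n=2}^N C^{n-1}\norm{x_n}_X .
\end{equation*}
Multiplying by $C$ turns the exponents $C^{n-1}$ into $C^{n}$. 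Combined with the term $C\norm{x_1}_X=C^1\norm{x_1}_X$, this yields exactly $\sum_{n=1}^N C^n\norm{x_n}_X$.

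The only point needing care is the order of splitting. Peeling off $x_1$ first is what places the smallest power on $x_1$ and the largest, $C^N$, on $x_N$, matching the stated indexing. Splitting off the last term instead would give the same bound with the powers assigned in reverse order, which is equally valid but not the form stated. No step is a real obstacle. Homogeneity and definiteness of the quasinorm are not used, only property $(iii)$ and $C\geq 1$.
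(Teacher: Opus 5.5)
Your induction is correct: one application of $(iii)$ to $x_1+\sum_{n=2}^N x_n$, followed by the induction hypothesis on the remaining $N-1$ terms, gives exactly the weights $C^n$. The paper states this without proof and cites \cite{Pesaquasi} and \cite{maligrandatypecotype}, where the observation is obtained by the same repeated use of the quasi-triangle inequality, so your argument matches the standard one.
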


Finally, we define the concepts of intersection and sum of spaces.
\begin{defn}
    Let $\norm{\cdot}_X$ and $\norm{\cdot}_Y$ be quasinorms on some complex linear spaces $X$ and $Y$. Suppose that $X$ and $Y$ are continuously embedded into some Hausdorff topological vector space $Z$. Then we define the spaces 
    \begin{equation*}
        \begin{aligned}
            X+Y&:=\{x+y: x\in X, y\in Y \}, \\
            X\cap Y &:=\{z: z\in X, z\in Y \},
        \end{aligned}
    \end{equation*}
    endowed with the quasinorms
    \begin{equation*}
        \begin{aligned}
            \norm{z}_{X+Y} &:= \inf_{\substack{x\in X, y\in Y\\z=x+y}} \norm{x}_X + \norm{y}_Y, \\
            \norm{z}_{X\cap Y} &:= \max\{ \norm{z}_X, \norm{z}_Y\}.
        \end{aligned}
    \end{equation*}
\end{defn}

One can easily verify that the defined functionals are indeed quasinorms.

\subsection{Banach function norms and quasinorms}

We now shift our focus towards a particular case of norms and quasinorms acting on spaces of functions. The functionals in question are the so-called Banach function norms and Banach function quasinorms. For more details on norms, we refer the reader to the standard book~\cite{BennetSharpley} and for quasinorms to the article~\cite{Pesaquasi}. For an alternative approach, one may consult~\cite{LORIST2024247}.

\begin{defn}
    Let ${\norm{\cdot}}\colon\M(R,\mu)\to [0,\infty]$ be a functional satisfying $\norm{\abs{f}}=\norm{f}$ for all $f\in\M(R,\mu)$. We say that $\norm{\cdot}$ is a \textsl{Banach function norm} if it satisfies the following for all $a\in \Co$, $f,f_n,g\in\M_+(R,\mu)$ and $E\subset R$ such that $f_n\nearrow f$ a.e.~and $\mu(E)<\infty$:
    \begin{enumerate}[label=(P\arabic*)]
        \item it is a norm, i.e.,~it satisfies
            \begin{enumerate}[label=(\alph*)]
                \item $\norm{af}=\abs{a}\norm{f}$,
                \item $\norm{f}=0$ if and only if $f=0$ a.e.,
                \item $\norm{f+g}\leq \norm{f}+ \norm{g}$,
            \end{enumerate}
        \item it has the lattice property, i.e.,~if $f\leq g$ a.e., then $\norm{f}\leq\norm{g}$,
        \item it has the (strong) Fatou property, i.e.,~$\norm{f_n}\nearrow\norm{f}$,
        \item it is nontrivial, i.e.,~$\norm{\chi_E}<\infty$,
        \item its elements are locally integrable, i.e.,~for the set $E$ there exists a constant $C_E>0$ such that $\int_E f \, d\mu \leq C_E \norm{f}$.
    \end{enumerate}
    We define the corresponding \textsl{Banach function space} as the collection 
    \begin{equation*}
        X:=\left\{f\in\M(R,\mu): \norm{f}_X < \infty \right\}.
    \end{equation*}
\end{defn}

Note that, as we will see later on, the strong Fatou property may be difficult or outright impossible to verify for some functionals. In Section~\ref{weak_fatou_sec}, we will introduce specific machinery in the form of the weak Fatou property to work around this issue. \par

As this set of properties may prove to be too restrictive, we introduce the much broader terms of quasi-Banach function norms and quasi-Banach function spaces.

\begin{defn}
    Let ${\norm{\cdot}}\colon\M(R,\mu)\to [0,\infty]$ be a functional satisfying $\norm{\abs{f}}=\norm{f}$ for all $f\in\M(R,\mu)$. We say that $\norm{\cdot}$ is a \textsl{quasi-Banach function norm} if it satisfies the axioms (P2), (P3), (P4) and a weaker version of (P1), namely
    \begin{enumerate}
        \item[(Q1)] it is a quasinorm, i.e.,~it satisfies
            \begin{enumerate}[label=(\alph*)]
                \item $\norm{af}=\abs{a}\norm{f}$,
                \item $\norm{f}=0$ if and only if $f=0$ a.e.,
                \item there exists a constant $C\geq1$ such that $\norm{f+g}\leq C(\norm{f}+ \norm{g})$.
            \end{enumerate}
    \end{enumerate}
    We define the corresponding \textsl{quasi-Banach function space} as the collection 
    \begin{equation*}
        X:=\left\{f\in\M(R,\mu): \norm{f}_X < \infty \right\}.
    \end{equation*}
\end{defn}
Again, we call the smallest possible constant in (Q1) c) the modulus of concavity, and we use the same notation as before. \par
Using the name \say{Banach} is justified as it has been proven in~\cite{Pesaquasi} that these spaces are indeed complete even in the quasinormed case. Another classical fact that has been extended to quasinorms in~\cite{Pesaquasi} is the following theorem.

\begin{thm}\label{continuous_embeddings}
    Let $\norm{\cdot}_X$ and $\norm{\cdot}_Y$ be quasi-Banach function norms and let $X$ and $Y$ be the corresponding quasi-Banach function spaces. Then $X\subset Y$ if and only if $X\hookrightarrow Y$.
\end{thm}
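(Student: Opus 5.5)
One direction is immediate: if $X\hookrightarrow Y$, then in particular $X\subset Y$. For the converse I would assume $X\subset Y$ and show that the identity $I\colon X\to Y$ is bounded. Since both spaces are complete quasinormed spaces (the completeness of quasi-Banach function spaces is recalled above from~\cite{Pesaquasi}) and are metrizable by the Aoki--Rolewicz theorem, they are $F$-spaces. The closed graph theorem holds for linear maps between $F$-spaces, so it suffices to show that $I$ has a closed graph.

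For the closed graph, take $f_n\to f$ in $X$ and $f_n\to g$ in $Y$; we must show $f=g$ a.e. The key ingredient is a Riesz--Fischer-type fact: convergence in a quasi-Banach function norm implies that some subsequence converges $\mu$-a.e. I would prove it as follows. Pass to a subsequence with $\norm{f_{n_{k+1}}-f_{n_k}}_X\le (2C_X)^{-k}$. Consider $h=\sum_k \abs{f_{n_{k+1}}-f_{n_k}}$.

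\begin{itemize}
\item By Proposition~\ref{nekvindas_trick}, the partial sums of $h$ have uniformly bounded $X$-quasinorm.
\item By the Fatou property (P3), $\norm{h}_X<\infty$.
\end{itemize}

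It follows that $h<\infty$ a.e. Indeed, on the set $\{h=\infty\}$ one has $\infty\cdot\chi_{\{h=\infty\}}\le h$. By (P2) and homogeneity this forces $\norm{\chi_{\{h=\infty\}}}_X=0$, hence the set is null by (Q1b). Thus $f_{n_k}$ converges a.e. to some $\tilde f$.

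It remains to identify $\tilde f$ with $f$. Write
\begin{equation*}
\abs{f-\tilde f}\le \liminf_{j}\abs{f-f_{n_j}}.
\end{equation*}
Applying (P3) to the increasing functions $\inf_{j\ge m}\abs{f-f_{n_j}}$ together with the lattice property gives
\begin{equation*}
\norm{f-\tilde f}_X\le \liminf_j \norm{f-f_{n_j}}_X=0.
\end{equation*}
So $\tilde f=f$ a.e. Applying the same argument in $Y$ along a further subsequence gives that this subsequence converges a.e. to $g$. Hence $f=g$ a.e., the graph is closed, and $I$ is bounded.

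The main obstacle is this a.e.-subsequence lemma in the quasinormed setting. One has to use the constants $C_X^n$ carefully via Proposition~\ref{nekvindas_trick}, choosing a fast-enough geometric decay so that the series of norms converges. If the closed graph theorem for $F$-spaces is considered unavailable, I would instead argue by contradiction directly. Suppose there are $f_k$ with $\norm{f_k}_X\le (2C_X)^{-k}$ and $\norm{f_k}_Y\ge k\,C_Y^k$. Form $F=\sum_k\abs{f_k}$, which lies in $X$ by the same Fatou argument, hence in $Y$. The lattice property then gives $\norm{F}_Y\ge\norm{f_k}_Y\to\infty$, a contradiction. This alternative avoids completeness and uses only (P2), (P3) and Proposition~\ref{nekvindas_trick}, so I would probably present it as the main proof.
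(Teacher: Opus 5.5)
Both of your arguments are correct. The paper gives no proof of this statement and simply quotes \cite{Pesaquasi}. The standard argument behind that citation is the Bennett--Sharpley one, carried over to quasinorms. First, $X$ and $Y$ are $F$-spaces (completeness plus Aoki--Rolewicz), so the closed graph theorem applies. Second, the graph of the identity is closed because both spaces embed into $\M_0(R,\mu)$ (Theorem~\ref{local_measure_embedding}).

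Your first route is exactly this. The only difference is that you reprove the a.e.-subsequence fact by hand, which could be replaced by a reference to Theorem~\ref{local_measure_embedding}.

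Your second route, the one you intend to present, is genuinely different. It is the automatic-continuity argument for positive operators, and it is the same device the paper uses in its proof of Theorem~\ref{weak_fatou_char}: normalize the bad functions so that Proposition~\ref{nekvindas_trick} keeps the partial sums uniformly bounded, add them up, and let the lattice property in $Y$ force the norm of the sum to be infinite.

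What it buys:
\begin{itemize}
\item It needs no completeness, no metrizability, no closed graph theorem and no convergence in measure.
\item It only uses the weak Fatou property of $X$, together with (P2) and homogeneity of $Y$.
\item In particular, it could be applied directly to $\norm{\cdot}_{\GWL(A,B)}$ in Theorem~\ref{amalgam_embeddings}, without first renorming by $\norm{\cdot}_{\fat(\GWL(A,B))}$ via Corollary~\ref{gwl_quasinormability}.
\end{itemize}
In exchange, the closed-graph route does not use positivity of the identity. It therefore extends to linear maps that are not order-preserving, provided their graph can be shown to be closed.

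Two small points for the write-up:
\begin{itemize}
\item The factor $C_Y^k$ is superfluous. No quasi-triangle inequality in $Y$ is used, so $\norm{f_k}_Y\geq k$ suffices.
\item You should say explicitly why such $f_k$ exist. A linear map between quasinormed spaces is continuous if and only if it is bounded. Any $g$ violating a bound satisfies $\norm{g}_X>0$ by (Q1)~b), so it can be rescaled.
\end{itemize}
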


If needed, we can restrict ourselves to working only with functions that are finite almost everywhere in the context of quasi-Banach function spaces. We will be using this fact when needed, without explicit mention. This classical theorem, see~\cite[Chapter 1, Theorem 1.4]{BennetSharpley}, was shown to still be valid, even when considering a quasinorm, again in~\cite{Pesaquasi}.

\begin{thm}\label{local_measure_embedding}
    Let $\norm{\cdot}_X$ be a quasi-Banach function norm and let $X$ be the corresponding quasi-Banach function space. Then 
    \begin{equation*}
        X\hookrightarrow \M_0(R,\mu),
    \end{equation*}
    where we equip $\M_0(R,\mu)$ with the topology of convergence in measure on the sets of finite measure.
\end{thm}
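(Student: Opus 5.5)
The embedding $X\hookrightarrow \M_0(R,\mu)$ has two parts. First I will show that every $f\in X$ is finite a.e. Then I will show continuity: whenever $\norm{f_n}_X\to 0$, the sequence $f_n$ converges to $0$ in measure on every set $E$ with $\mu(E)<\infty$. Since both topologies are metrizable (by the Aoki--Rolewicz theorem for $X$), sequential continuity at $0$ is enough, and linearity of the identity then gives continuity everywhere.

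\emph{Finiteness a.e.} Let $f\in X$ and set $F=\{\abs{f}=\infty\}$. Suppose $\mu(F)>0$. By $\sigma$-finiteness there is $E\subset F$ with $0<\mu(E)<\infty$. For every $t>0$ we have $t\chi_E\le\abs{f}$, so by (P2) and (Q1)(a)
\begin{equation*}
    t\norm{\chi_E}_X\le\norm{f}_X<\infty .
\end{equation*}
Letting $t\to\infty$ gives $\norm{\chi_E}_X=0$, hence $\chi_E=0$ a.e. by (Q1)(b). This contradicts $\mu(E)>0$.

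\emph{Continuity.} Fix $E$ with $\mu(E)<\infty$ and $\varepsilon>0$, and put $E_n=\{x\in E:\abs{f_n(x)}>\varepsilon\}$. Since $\varepsilon\chi_{E_n}\le\abs{f_n}$, the lattice property gives
\begin{equation*}
    \norm{\chi_{E_n}}_X\le \varepsilon^{-1}\norm{f_n}_X\to 0 .
\end{equation*}
So it remains to prove the following claim: if $G_n\subset E$ and $\norm{\chi_{G_n}}_X\to0$, then $\mu(G_n)\to0$. This claim replaces (P5), which is unavailable for quasinorms, and it is the main obstacle.

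I will argue the claim by contradiction. Suppose that, after passing to a subsequence, $\mu(G_n)\ge\delta>0$ and $\norm{\chi_{G_n}}_X\le 4^{-n}$, say. Let $C=C_X$. By Proposition~\ref{nekvindas_trick},
\begin{equation*}
    \norm{\sum_{n=1}^N 2^n\chi_{G_n}}_X\le\sum_{n=1}^N C^n2^n4^{-n}.
\end{equation*}
This bound is uniform in $N$ only when $C<2$, so I will choose the decay rate adapted to $C$: take $\norm{\chi_{G_n}}_X\le (2C)^{-n}4^{-n}$, which keeps the right-hand side bounded in $N$. The Fatou property (P3) then gives $g=\sum_n 2^n\chi_{G_n}\in X$. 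By the first step, $g$ is finite a.e.

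On the other hand, $G=\limsup_n G_n$ satisfies $\mu(G)\ge\delta$, because the $G_n$ lie in the finite-measure set $E$ (reverse Fatou lemma for sets). Every $x\in G$ belongs to infinitely many $G_n$, so $g(x)=\infty$ there. Thus $g$ is infinite on a set of positive measure, which is the desired contradiction.

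The subtle point is making the geometric-series bound in Proposition~\ref{nekvindas_trick} summable by choosing the subsequence decay against $C^n$. Once the subsequence is chosen that way, the rest is routine.
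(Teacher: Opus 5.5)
Your proof is correct. The paper gives no argument of its own here. It quotes the result from \cite{Pesaquasi}, and points to \cite[Chapter 1, Theorem 1.4]{BennetSharpley} for the normed case.

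The classical proof there is immediate from (P5). The bound $\int_E\abs{f}\,d\mu\le C_E\norm{f}_X$ gives $X\hookrightarrow L^1(E)$, hence convergence in measure on $E$, with an explicit linear modulus.

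Quasi-Banach function norms need not satisfy (P5), so you replace it by a qualitative lemma: if $G_n\subset E$ and $\norm{\chi_{G_n}}_X\to0$, then $\mu(G_n)\to0$. You prove it by a Riesz--Fischer-type argument:
\begin{enumerate}[label=(\alph*)]
\item Proposition~\ref{nekvindas_trick} along a subsequence with $\sum_n C_X^n\norm{\chi_{G_n}}_X<\infty$;
\item then (P3);
\item then a.e.\ finiteness of elements of $X$.
\end{enumerate}
This is the natural substitute in the quasinormed setting. It rests on the same mechanism that gives completeness of quasi-Banach function spaces in \cite{Pesaquasi}. It uses only (Q1), (P2), (P3), not even (P4). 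The price is that it is non-quantitative: you get no explicit dependence of the admissible size of $\norm{f}_X$ on $\varepsilon$ and $E$, whereas (P5) gives a linear bound.

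Two small simplifications:
\begin{enumerate}[label=(\roman*)]
\item The weights $2^n$ are unnecessary. The sum $\sum_n\chi_{G_n}$ is already infinite on $\limsup_n G_n$, and you only need the summability above.
\item To pass from sequential continuity at $0$ to continuity, first countability of $X$ and linearity of the identity suffice. You do not need metrizability of $\M_0(R,\mu)$.
\end{enumerate}
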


As we know, the norms of characteristic functions of sets having the same measure may vary when working with quasi-Banach function spaces. However, the following theorem shows that we are still able to say something about these norms if the underlying measure space is nonatomic. This result was first shown in~\cite[Lemma 4.4]{Slavikova2012}. We state this theorem in a slightly more general manner, but the original proof still works without any alterations. Also note that the first part of the statement is trivial in the case of a finite measure space due to the property (P4).

\begin{thm}\label{lenka4.4}
    Let $(R,\mu)$ be a nonatomic measure space, $\norm{\cdot}_X$ be a quasi-Banach function norm and let $X$ be the corresponding quasi-Banach function space. Let $t\in(0,\infty)$. Then there exists a constant $C_1>0$ such that for any $E\subset R$ satisfying $\mu(E)=t$, we have 
    \begin{equation*}
        \norm{\chi_E}_X \leq C_1.
    \end{equation*}
    Furthermore, if $\norm{\cdot}_X$ is a Banach function norm, there exists a constant $C_2>0$ such that for any $E\subset R$ satisfying $\mu(E)=t$, we have 
    \begin{equation*}
        \norm{\chi_E}_X \geq C_2.
    \end{equation*}
\end{thm}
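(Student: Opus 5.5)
The upper bound comes from a chain argument and the lower bound from a counting argument; both use nonatomicity to cut sets into pieces of prescribed measure.

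\textbf{Upper bound.} Suppose no such $C_1$ exists. Then there are sets $E_n$ with $\mu(E_n)=t$ and $\norm{\chi_{E_n}}_X\geq 4^n C_X^{n}$, where $C_X$ is the modulus of concavity of $\norm{\cdot}_X$. The plan is to build from these a function $f=\sum_n 2^{-n}C_X^{-n}\chi_{F_n}$ with $F_n\subset E_n$ and $\norm{f}_X=\infty$, and then to show that $f$ vanishes outside a set of finite measure on which (P4) and the Fatou property force $\norm{f}_X<\infty$, a contradiction. Working directly with the union of the $E_n$ fails, because it may have infinite measure.

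The key reduction comes first. Since $\mu$ is $\sigma$-finite, write $R=\bigcup_k R_k$ with the $R_k$ increasing and $\mu(R_k)<\infty$. For each $n$ the sets $E_n\cap R_k$ increase to $E_n$, so by the Fatou property (P3) there is $k_n$ with $\norm{\chi_{E_n\cap R_{k_n}}}_X\geq \tfrac12 4^n C_X^n$. This alone does not bound the measure of the union, since $k_n$ may tend to infinity.

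We therefore use the opposite mechanism. Fix a single set $G$ of finite measure, say $G=R_k$. By (P4) and the lattice property (P2), every subset of $G$ has norm at most $\norm{\chi_G}_X<\infty$. The plan is to show that if $\sup_{\mu(E)=t}\norm{\chi_E}_X=\infty$, then one can find $E$ with $\mu(E)=t$, $\mu(E\setminus R_k)$ arbitrarily small for suitable $k$, and $\norm{\chi_{E\setminus R_k}}_X$ huge. Concretely, by the quasi-triangle inequality,
\[
\norm{\chi_{E\setminus R_k}}_X\geq C_X^{-1}\norm{\chi_E}_X-\norm{\chi_{R_k}}_X .
\]
Iterating, we choose disjoint sets $H_j$ of measure at most $t$, lying in the annuli $R_{k_{j+1}}\setminus R_{k_j}$, with $\norm{\chi_{H_j}}_X\geq j\,C_X^{j}2^{j}$. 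Nonatomicity lets us split $H_j$ into pieces of measure $t2^{-j}$; by Proposition~\ref{nekvindas_trick}, one of these pieces $H_j'$ has norm at least roughly $j\,2^{j}C_X^{j}/(2^{j}C_X^{2^j})$. This exponential loss is the main obstacle. To avoid it, we split each $H_j$ instead into only two halves and use a binary refinement, keeping at each step the half whose norm is at least $(2C_X)^{-1}$ of the whole. After $j$ steps the surviving piece has measure $t2^{-j}$ and norm at least $\norm{\chi_{H_j}}_X(2C_X)^{-j}$, which is still at least $j$. The union $H=\bigcup_j H_j'$ then has measure at most $t$. Since the $H_j'$ are disjoint, $\chi_H\geq\chi_{H_j'}$ for every $j$, so by (P2) $\norm{\chi_H}_X\geq j$ for all $j$. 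On the other hand, $H$ has finite measure, so (P4) gives $\norm{\chi_H}_X<\infty$, a contradiction. The lower bound $\norm{\chi_{H_j}}_X\geq j(2C_X)^j$ is available by choosing each $H_j$ from the unbounded family, after discarding a big $R_k$ via the displayed inequality.

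\textbf{Lower bound.} Here $\norm{\cdot}_X$ is a Banach function norm, so $C_X=1$, and (P5) applies only to fixed sets, so it cannot be used directly. Instead fix $E_0$ with $\mu(E_0)=t$, and take any $E$ with $\mu(E)=t$ and $\norm{\chi_E}_X<\varepsilon$. Chaining via subadditivity gives nothing directly, so we use a counting argument. If infinitely many disjoint sets $E_j$ of measure $t$ had $\norm{\chi_{E_j}}_X<4^{-j}$, then $\sum_j\chi_{E_j}$ would have norm at most $1$ by subadditivity and Fatou. Yet by nonatomicity and a measure-exhaustion argument we can instead arrange the sets to overlap a fixed finite-measure region, where (P5) bounds $\int\chi_{E_j}$ from below, giving a contradiction.
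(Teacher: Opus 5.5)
The paper does not reprove this result; it defers to Slav\'ikov\'a's Lemma~4.4. Your upper bound is correct, your lower bound has a genuine gap, and the fix is the device you already used for the upper bound.

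\textbf{Upper bound: correct, but overbuilt.} Halving a set costs at most a factor $2C_X$, since $\norm{\chi_A+\chi_B}_X\le 2C_X\max\{\norm{\chi_A}_X,\norm{\chi_B}_X\}$. So $j$ halvings of a set of measure at most $t$ and norm at least $j(2C_X)^j$ leave a piece of measure at most $t2^{-j}$ and norm at least $j$. The union $H$ of these pieces has $\mu(H)\le t$ and $\norm{\chi_H}_X=\infty$ by (P2), contradicting (P4). Several parts can be deleted:
\begin{itemize}
\item the abandoned plan with $f=\sum_n 2^{-n}C_X^{-n}\chi_{F_n}$;
\item the $\sigma$-finiteness reduction, the annuli, the displayed inequality, and the disjointness.
\end{itemize}
Since $\sup_{\mu(E)=t}\norm{\chi_E}_X=\infty$, you may take any $E_j$ with $\mu(E_j)=t$ and $\norm{\chi_{E_j}}_X\ge j(2C_X)^j$. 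Overlaps are harmless, because you only use $\mu(H)\le\sum_j\mu(H_j')$ and $H_j'\subset H$. For the same reason, the ``exponential loss'' from cutting into $2^j$ pieces at once is no obstacle: just demand $\norm{\chi_{E_j}}_X\ge j\sum_{i=1}^{2^j}C_X^i$.

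\textbf{Lower bound: the gap.} The step ``arrange the sets to overlap a fixed finite-measure region, where (P5) bounds $\int\chi_{E_j}$ from below'' has no mechanism behind it. The sets of small norm are given to you, and they may escape every fixed set of finite measure. If they are disjoint, a uniform overlap $\mu(E_j\cap G)\ge c>0$ with $\mu(G)<\infty$ is impossible, because $\sum_j\mu(E_j\cap G)\le\mu(G)$. The disjointness is also unjustified, and the bound on $\norm{\sum_j\chi_{E_j}}_X$ is never used.

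The missing idea is to build the fixed set out of the bad sets themselves:
\begin{itemize}
\item choose $E_j$ with $\mu(E_j)=t$ and $\norm{\chi_{E_j}}_X<4^{-j}$;
\item by nonatomicity, pick $G_j\subset E_j$ with $\mu(G_j)=t2^{-j}$;
\item put $G:=\bigcup_j G_j$, so that $\mu(G)\le t$.
\end{itemize}
Now apply (P5) to this single set $G$, with its constant $C_G$:
\[
t2^{-j}=\int_G\chi_{G_j}\,d\mu\le C_G\norm{\chi_{G_j}}_X\le C_G\norm{\chi_{E_j}}_X<C_G4^{-j}
\]
for every $j$, which is absurd. What makes this work is the quantitative mismatch: the overlap decays like $2^{-j}$ while the norm decays like $4^{-j}$. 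No uniform overlap is needed. The argument uses only (P2) and (P5), not the triangle inequality or (P3). Hence it proves admissibility for every quasi-Banach function norm with (P5), which is Proposition~\ref{p5admissible}.
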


 This fact was recently noticed again in the article~\cite{mihulapandy} and prompted the authors to introduce a new concept of extremal fundamental functions defined below.
\begin{defn}
    Let $\norm{\cdot}_X$ be a quasi-Banach function norm and let $X$ be the corresponding quasi-Banach function space. We define the \textsl{minimal fundamental function} of $\norm{\cdot}_X$ as 
     \begin{equation*}
        \varphi_X^{\min}(t) := \inf_{\substack{E\subset R \\ \mu(E)=t}} \norm{\chi_E}_X, \ \text{for } t \text{ in the range of } \mu,
     \end{equation*}
     and the \textsl{maximal fundamental function} of $\norm{\cdot}_X$ as 
     \begin{equation*}
        \varphi_X^{\max}(t) := \sup_{\substack{E\subset R \\\mu(E)=t}} \norm{\chi_E}_X, \ \text{for } t \text{ in the range of } \mu.
     \end{equation*}
\end{defn}

Notice that the maximal fundamental function is nonzero, for every $t>0$, because if it were not, then $\norm{\chi_E}_X=0$ for some set of positive measure. However, that is in contradiction to the property (Q1) of $\norm{\cdot}_X$. Also note that due to Theorem~\ref{lenka4.4} the maximal fundamental function is always finite and that for Banach function norms the minimal fundamental function is nonzero for $t>0$ if our measure space is nonatomic. For quasinorms, that may not be the case, as demonstrated by the following example.

\begin{example}\label{non_admissibility}
    Let $w(x):=\min\{1,1/x^3\}$. Consider the quasi-Banach function space $L^1(w)([0,\infty),\lambda)$ endowed with the norm
    \begin{equation*}
        \norm{f}_{L^1(w)}:=\int_0^\infty \abs{f(x)}w(x) \, dx, \ f\in \M([0,\infty),\lambda).
    \end{equation*}
    It is easy to check that this functional satisfies all axioms (P1)-(P4). We calculate that
    \begin{equation*}
        \norm{\chi_{(n,n+1)}}_{L^1(w)}=\int_n^{n+1} \min\left\{1,\frac{1}{x^3}\right\} \, dx \leq \frac{1}{n^3}\to 0.
    \end{equation*}
\end{example}

It will be useful to restrict ourselves to spaces that possess the property that their minimal fundamental function is nonzero for $t>0$.

\begin{defn}
    Let $\norm{\cdot}_X$ be a quasi-Banach function norm and let $X$ be the corresponding quasi-Banach function space. 
    We say that $\norm{\cdot}_X$ is \textsl{admissible} if $\varphi_X^{\min}(t)>0$ for all $t>0$.
\end{defn}

The next proposition provides a sufficient condition for admissibility, which is a generalization of the second part of Theorem~\ref{lenka4.4}. This was observed in~\cite[Proposition 3.2]{mihulapandy}.

\begin{prop}\label{p5admissible}
    Let $(R,\mu)$ be a nonatomic measure space and let $\norm{\cdot}_X$ be a quasi-Banach function norm with the property (P5). Then $\norm{\cdot}_X$ is admissible.
\end{prop}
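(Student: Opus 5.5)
We argue by contradiction. Suppose that $\varphi_X^{\min}(t)=0$ for some $t>0$. Then there is a sequence of sets $E_n\subset R$ with $\mu(E_n)=t$ and $\norm{\chi_{E_n}}_X\to 0$. By passing to a subsequence we may assume $\norm{\chi_{E_n}}_X\leq 4^{-n}$; in fact, any summability condition adapted to the modulus of concavity will do. The goal is to turn this into a violation of (P5). For that we need a single set $F$ of finite measure such that the $E_n$ carry a substantial portion of their mass inside $F$, because (P5) only controls integrals over a fixed set of finite measure.

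The main obstacle is that the sets $E_n$ may escape to infinity, as in Example~\ref{non_admissibility}. There admissibility genuinely fails, and the weight $w$ violates (P5) because it decays. So the argument must use (P5) on sets of finite measure in a way that detects escaping mass. I would try the following. Using $\sigma$-finiteness, write $R=\bigcup_k R_k$ with $R_k$ increasing and $\mu(R_k)<\infty$. Let $F_k:=R_k$. For each fixed $k$, (P5) gives
\begin{equation*}
    \mu(E_n\cap F_k)=\int_{F_k}\chi_{E_n}\,d\mu\leq C_{F_k}\norm{\chi_{E_n}}_X\to 0 \quad (n\to\infty).
\end{equation*}
This alone is not a contradiction. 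Escaping mass has to be combined with the lattice property and the Fatou property via a gluing construction. The idea is to choose indices $n_1<n_2<\dots$ and sets $G_j:=E_{n_j}\setminus F_{k_j}$, together with an increasing sequence $k_j$, so that the $G_j$ are essentially disjoint and each has measure at least $t/2$. This is possible by first choosing $k_j$ large enough that $\mu(G_i\setminus F_{k_j})$ is tiny for $i<j$, and then choosing $n_j$ large enough that $\mu(E_{n_j}\cap F_{k_j})<t/2$.

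Now set $g:=\sum_j 2^{j}\chi_{G_j}$, or more carefully $g:=\sum_j a_j\chi_{G_j}$ with $a_j\to\infty$. By Proposition~\ref{nekvindas_trick}, together with the Fatou property (P3) applied to the partial sums, we get $\norm{g}_X<\infty$ provided $\sum_j C^j a_j 4^{-n_j}<\infty$. This can be arranged by taking $n_j$ large enough. Next, exploit nonatomicity. Select subsets $H_j\subset G_j$ of measure exactly $t/2^{j+1}$, so that $F:=\bigcup_j H_j$ has finite measure at most $t$. Then (P5) on $F$ yields
\begin{equation*}
    \sum_j a_j\frac{t}{2^{j+1}}=\int_F g\,d\mu\leq C_F\norm{g}_X<\infty.
\end{equation*}
This is contradicted by choosing $a_j:=4^{j}$, which forces the left-hand side to diverge while $\norm{g}_X$ stays finite thanks to the rapid decay of $\norm{\chi_{E_{n_j}}}_X$.

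Nonatomicity is what allows us to carve out the sets $H_j$ of prescribed measure. The delicate bookkeeping lies in making the $G_j$ disjoint enough and in making the $\norm{\chi_{E_{n_j}}}_X$ decay faster than $C^j a_j$ grows. In the case where the $E_n$ do not escape, that is, if some $F_k$ satisfies $\mu(E_n\cap F_k)\geq\varepsilon$ for infinitely many $n$, the contradiction is immediate from the displayed (P5) estimate. So the proof splits into these two cases, and the escaping case is the real work.
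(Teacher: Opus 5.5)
Your argument is correct, but it follows a different route from the one the paper relies on. The paper does not prove this proposition itself; it cites \cite[Proposition 3.2]{mihulapandy}.

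The tools assembled in the preliminaries point to a short duality argument. Since $\norm{\cdot}_X$ satisfies (P4) and (P5), Theorem~\ref{second_dual} makes $\norm{\cdot}_{X'}$ a Banach function norm. On the nonatomic space, the first part of Theorem~\ref{lenka4.4} then gives $\varphi^{\max}_{X'}(t)<\infty$. H\"older's inequality (Theorem~\ref{holder}) with both functions equal to $\chi_E$ yields $t=\mu(E)\leq\norm{\chi_E}_X\norm{\chi_E}_{X'}$, i.e.\ $\varphi^{\min}_X(t)\geq t/\varphi^{\max}_{X'}(t)>0$.

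You instead argue by contradiction and construct a $g\in X$ together with a set $F$ of finite measure on which $\int_F g\,d\mu=\infty$. You use only Proposition~\ref{nekvindas_trick}, (P2), (P3), (P5) and nonatomicity. This is self-contained and avoids the Gogatishvili--Soudsk\'y and Slav\'ikov\'a results. The duality route is shorter and gives an explicit lower bound for $\varphi^{\min}_X$ in terms of $\varphi^{\max}_{X'}$.

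Two remarks on your write-up.

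First, the exhaustion $F_k$, the sets $G_j=E_{n_j}\setminus F_{k_j}$ and the case split are superfluous. Escape to infinity does no harm, because (P5) is applied on $F=\bigcup_j H_j$, which is built from pieces of the $E_{n_j}$ themselves. Taking $g=\sum_j 4^j\chi_{E_{n_j}}$ and $H_j\subset E_{n_j}$ with $\mu(H_j)=t2^{-j-1}$ already gives $\int_F g\,d\mu\geq 4^j\mu(H_j)=2^{j-1}t\to\infty$, whatever the overlaps.

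Second, your $G_j$ are only essentially disjoint, so the displayed identity $\sum_j a_j t2^{-j-1}=\int_F g\,d\mu$ should read $\leq$. That inequality is all the contradiction needs.
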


Another property of these spaces that we would like to investigate is the absolute continuity of the norm, which is studied in~\cite[Chapter 1.3]{BennetSharpley}. 

\begin{defn}
     Let $\norm{\cdot}_X$ be a quasi-Banach function norm and let $X$ be the corresponding quasi-Banach function space. A function $f\in\M(R,\mu)$ is said to have an \textsl{absolutely continuous norm} in $X$ if for every sequence $E_n\subset R$ satisfying $\chi_{E_n}\to 0$ a.e., it holds $\norm{f\chi_{E_n}}\to 0$. We denote the subspace of all functions having absolutely continuous norm in $X$ as $X_a$. If it is that $X=X_a$, we say that the space $X$ has an \textsl{absolutely continuous norm}. 
\end{defn}

\subsection{Associate space}

Associate spaces are an important notion in the theory of Banach function spaces and quasi-Banach function spaces. For a thorough study of them in the case of Banach function spaces, we refer the reader to~\cite[Chapter 1, Sections 2, 3 and 4]{BennetSharpley}. \par
Notice that the definition of the associate space does not require any special assumptions on the original functional.

\begin{defn}
    Let ${\norm{\cdot}_X}\colon \M(R,\mu) \to [0,\infty]$ be a nonnegative functional, and let
    \begin{equation*}
        X:=\left\{f\in\M(R,\mu): \norm{f}_X < \infty \right\}.
    \end{equation*}
    We define the \textsl{associate functional} of $\norm{\cdot}_X$ as 
    \begin{equation*}
        \norm{f}_{X'}:=\sup\left\{\int_R \abs{fg} \, d\mu: g\in X \text{,  } \norm{g}_X\leq 1\right\},
    \end{equation*}
    for any $f\in \M(R,\mu)$. Furthermore, we define the \textsl{associate space} of $X$ as the collection 
    \begin{equation*}
        X':=\left\{f\in\M(R,\mu): \norm{f}_{X'} < \infty \right\}.
    \end{equation*}
\end{defn}

Note that we will usually only write
\begin{equation*}
    \norm{f}_{X'}= \sup_{\norm{g}_X\leq 1} \int_R \abs{fg} \, d\mu.
\end{equation*}
We are mostly interested in the particular case, where we work with at least a quasinorm. Nevertheless, we think it is interesting to mention that almost no assumptions are required. In fact, the next classical result, which is the famous H\"older inequality, also works without any additional assumptions.
\begin{thm}\label{holder}
    Let ${\norm{\cdot}_X}\colon \M(R,\mu) \to [0,\infty]$ be a positively homogeneous nonnegative functional and $\norm{\cdot}_{X'}$ be its associate functional. Then for all $f\in \M(R,\mu)$, it holds that
    \begin{equation*}
        \int_R \abs{fg} \, d\mu \leq \norm{g}_X \norm{f}_{X'},
    \end{equation*}
    where $0\cdot\infty=-\infty \cdot \infty = \infty$.
\end{thm}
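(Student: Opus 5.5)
We prove the inequality by cases on the values of $\norm{g}_X$ and $\norm{f}_{X'}$, always working with the normalized function $g/\norm{g}_X$. The only ingredients are the definition of the associate functional as a supremum and the positive homogeneity of $\norm{\cdot}_X$. No triangle inequality, lattice property or Fatou property is needed, which is why the statement holds under such weak hypotheses. The convention $0\cdot\infty=\infty$ is chosen precisely so that the degenerate cases also come out right.

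\emph{The generic case.} Suppose first that $0<\norm{g}_X<\infty$. Put $h:=g/\norm{g}_X$. By positive homogeneity we get $\norm{h}_X=1$, so $h\in X$ is admissible in the supremum defining $\norm{f}_{X'}$. Hence
\begin{equation*}
\int_R \abs{fh}\,d\mu\le \norm{f}_{X'}.
\end{equation*}
Multiplying by the positive finite number $\norm{g}_X$ and using linearity of the integral for nonnegative functions gives $\int_R\abs{fg}\,d\mu\le\norm{g}_X\norm{f}_{X'}$. This holds both when $\norm{f}_{X'}$ is finite and when it is infinite.

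\emph{The case $\norm{g}_X=\infty$.} If $\norm{f}_{X'}>0$, the right-hand side equals $\infty$ and there is nothing to prove. If $\norm{f}_{X'}=0$, the stated convention $0\cdot\infty=\infty$ again makes the right-hand side $\infty$, so the inequality holds trivially.

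\emph{The case $\norm{g}_X=0$.} If $\norm{f}_{X'}=\infty$, the convention again gives right-hand side $\infty$. So assume $\norm{f}_{X'}<\infty$; we must show $\int_R\abs{fg}\,d\mu=0$. For every $t>0$, homogeneity gives $\norm{tg}_X=t\norm{g}_X=0\le1$, so $tg$ is admissible in the supremum. Therefore
\begin{equation*}
t\int_R\abs{fg}\,d\mu\le\norm{f}_{X'}\quad\text{for all } t>0.
\end{equation*}
Letting $t\to\infty$ forces $\int_R\abs{fg}\,d\mu=0$, which is the required inequality.

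The only delicate point is this last case. Positive homogeneity alone does not give $g=0$ a.e., so we cannot simply say the integral vanishes. The scaling argument with $tg$ handles it, and it is exactly where the assumption that $\norm{\cdot}_X$ is positively homogeneous, rather than merely nonnegative, is used. One should also check that homogeneity is meant for all scalars $t>0$ with $\norm{tg}_X=t\norm{g}_X$ in $[0,\infty]$, so that $\norm{tg}_X=0$ when $\norm{g}_X=0$. With the cases above being exhaustive, this completes the plan.
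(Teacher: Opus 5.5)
The paper gives no proof of this statement. It quotes it as the classical H\"older inequality and only remarks that the convention covers two pathologies: $\norm{g}_X=0$ with $g\neq 0$, and $\norm{f}_{X'}=\sup\emptyset=-\infty$. Your argument therefore stands on its own. It is correct for every case it treats, and the scaling argument with $tg$ is exactly the right way to handle the first pathology.

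What is missing is the second pathology. Your claim that the cases are exhaustive tacitly assumes $\norm{f}_{X'}\geq 0$. However, the supremum defining $\norm{f}_{X'}$ runs over $\{h\in X:\norm{h}_X\leq 1\}$, and this set can be empty.

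\begin{itemize}
\item By the normalization you use in your first case, the set is empty exactly when $\norm{h}_X=\infty$ for every $h\in\M(R,\mu)$. A positively homogeneous functional can do this, for instance $\norm{\cdot}_X\equiv\infty$. Then $X=\emptyset$ and $\norm{f}_{X'}=-\infty$.
\item In your first and third cases this cannot happen, because $g/\norm{g}_X$, respectively $g$ itself, is admissible. So $\norm{f}_{X'}\geq 0$ there, and your reasoning is fine.
\item In the case $\norm{g}_X=\infty$, though, you split only into $\norm{f}_{X'}>0$ and $\norm{f}_{X'}=0$. You need the extra subcase $\norm{f}_{X'}=-\infty$. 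It is settled at once by the other half of the stated convention, $-\infty\cdot\infty=\infty$, which makes the right-hand side infinite. This is precisely why that clause appears in the statement.
\end{itemize}

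With this one line added, your proof is complete.
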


The convention at the end of the previous theorem is needed because our lack of assumptions allows for pathological cases that require special attention. More precisely, we allow that $\norm{g}_X=0$ even if $g\not= 0$ a.e., and it can also happen that $\norm{f}_{X'}=\sup \emptyset=-\infty$. \par
From now on, we assume that $\norm{\cdot}_X$ is a quasi-Banach function norm. Note that in this case, it is easily verified that in the definition of the associate norm, we can only work with functions in the unit sphere, that is,
\begin{equation*}
    \norm{f}_{X'}= \sup_{\norm{g}_X= 1} \int_R \abs{fg} \, d\mu.
\end{equation*}\par 
We follow with a theorem due to A.~Gogatishvili and F.~Soudsk\'y from~\cite{SoGo} that generalizes the classical result by G.~G.~Lorentz and W.~A.~J.~Luxemburg, which can be found in~\cite[Chapter 1, Theorem 2.7]{BennetSharpley}. It shows that we need very little for the associate functional to be a Banach function norm. In particular, when a quasi-Banach function norm satisfies (P5), its associate functional is a Banach function norm. It also includes the characteristic property of Banach function spaces that they coincide with their second associate space.

\begin{thm}\label{second_dual}
    Let ${\norm{\cdot}_X}\colon \M(R,\mu) \to [0,\infty]$ be a nonnegative functional that satisfies (P4), (P5), and for all $f\in\M(R,\mu)$ it holds that $\norm{f}_X=\norm{\abs{f}}_X$. Then its associate functional $\norm{\cdot}_{X'}$ is a Banach function norm. \par 
    Furthermore, $\norm{\cdot}_X$ is equivalent to a Banach function norm if and only if $\norm{\cdot}_X\approx \norm{\cdot}_{X''}$, where $\norm{\cdot}_{X''}$ denotes the associate functional of $\norm{\cdot}_{X'}$, and if $\norm{\cdot}_X$ is a Banach function norm, then $\norm{\cdot}_X=\norm{\cdot}_{X''}$.
\end{thm}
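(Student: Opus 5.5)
Theorem~\ref{second_dual} has three parts: (a) $\norm{\cdot}_{X'}$ is a Banach function norm; (b) $\norm{\cdot}_X$ is equivalent to a Banach function norm if and only if $\norm{\cdot}_X\approx\norm{\cdot}_{X''}$; (c) if $\norm{\cdot}_X$ is a Banach function norm, then $\norm{\cdot}_X=\norm{\cdot}_{X''}$. I would prove them in the order (a), (c), (b), since (b) follows quickly from the first two.

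For (a), I would check the axioms (P1)--(P5) directly from the definition of $\norm{f}_{X'}$ as a supremum of $\int_R|fg|\,d\mu$ over $\norm{g}_X\le1$.
\begin{itemize}
\item Homogeneity and the triangle inequality hold because each map $f\mapsto\int_R|fg|\,d\mu$ is a seminorm, and a supremum of seminorms is a seminorm.
\item The lattice property (P2) is immediate.
\item The Fatou property (P3) follows from the monotone convergence theorem together with the interchange of two suprema.
\item Property (P5) for $X'$ follows from (P4) for $X$. For $\mu(E)<\infty$, the function $g=\chi_E/\norm{\chi_E}_X$ is admissible in the supremum, which gives $\int_E|f|\,d\mu\le\norm{\chi_E}_X\norm{f}_{X'}$.
\item Property (P4) for $X'$ follows from (P5) for $X$: $\norm{\chi_E}_{X'}=\sup_{\norm{g}_X\le1}\int_E|g|\,d\mu\le C_E$.
\item Definiteness: if $f\ne0$ on a set of positive measure, $\sigma$-finiteness lets me pick $E$ of finite positive measure on which $|f|$ is bounded below. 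Testing with $\chi_E/\norm{\chi_E}_X$ gives $\norm{f}_{X'}>0$, where (P4) guarantees $\norm{\chi_E}_X<\infty$.
\end{itemize}

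For (c), the inequality $\norm{f}_{X''}\le\norm{f}_X$ follows from the H\"older inequality (Theorem~\ref{holder}). The reverse inequality is the Lorentz--Luxemburg theorem, and this is the main obstacle. My plan is as follows.
\begin{itemize}
\item First reduce to $f\ge0$ bounded and supported on a set of finite measure. This uses (P3) for both $X$ and $X''$, after exhausting $R$ by sets of finite measure.
\item For such $f$ with $\norm{f}_X>1$, separate $f$ from the convex set $\{h\ge0:\norm{h}_X\le1\}$. Work inside $L^1(E)$, where $E\supset\supp f$ has finite measure; the convex set is closed there by Fatou and the embedding into measure convergence (Theorem~\ref{local_measure_embedding}).
\item Hahn--Banach yields $g\in L^\infty(E)$ with $\int fg>\sup_{\norm{h}_X\le1}\operatorname{Re}\int hg$. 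Replacing $g$ by $|g|$ (using the lattice property) gives $\norm{g}_{X'}\le1<\int fg$, hence $\norm{f}_{X''}>1$.
\item Scaling then gives $\norm{f}_X\le\norm{f}_{X''}$.
\end{itemize}

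For (b), if $\norm{\cdot}_X\approx\norm{\cdot}_{X''}$, then $\norm{\cdot}_X$ is equivalent to $\norm{\cdot}_{X''}$, which is a Banach function norm by applying (a) twice; note that $\norm{\cdot}_{X'}$ satisfies (P4) and (P5) by (a). Conversely, suppose $\norm{\cdot}_X\approx\norm{\cdot}_Y$ with $\norm{\cdot}_Y$ a Banach function norm. Equivalent functionals have equivalent associates, since the constants pass through the suprema, and applying this twice gives $\norm{\cdot}_{X''}\approx\norm{\cdot}_{Y''}$. By (c), $\norm{\cdot}_{Y''}=\norm{\cdot}_Y\approx\norm{\cdot}_X$, which closes the argument.
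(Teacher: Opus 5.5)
The paper does not prove this theorem. It quotes it from Gogatishvili--Soudsk\'y, and the final claim is the Lorentz--Luxemburg theorem from Bennett--Sharpley. Your reconstruction follows the standard route: direct verification of the axioms for $\norm{\cdot}_{X'}$, Hahn--Banach separation in $L^1(E)$ for $\norm{f}_X\le\norm{f}_{X''}$, and transport of equivalence through two associates. Part (b) and the reduction step in (c) are fine, but two steps need repair.

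\textbf{Homogeneity in (a).} Your proofs of (P5) and of definiteness test with $\chi_E/\norm{\chi_E}_X$ and use $\norm{\chi_E/\norm{\chi_E}_X}_X=1$. That is positive homogeneity of $\norm{\cdot}_X$, which is not among the stated hypotheses. With the paper's unit-ball definition of the associate functional, this hypothesis cannot simply be dropped. The functional $\norm{f}_X:=2+\norm{f}_{L^1}$ satisfies (P4), (P5) and $\norm{f}_X=\norm{\abs{f}}_X$, yet no $g$ satisfies $\norm{g}_X\le 1$. So $\norm{\cdot}_{X'}\equiv\sup\emptyset$, which is not a Banach function norm. You should either state homogeneity explicitly (it holds in every application in the paper), or work with the quotient form $\sup_g \int_R\abs{fg}\,d\mu/\norm{g}_X$, where testing with $g=\chi_E$ needs no rescaling. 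In either version you also need $\norm{\chi_E}_X>0$ whenever $\mu(E)>0$ in order to divide. This follows from (P5), since $\mu(E)\le C_E\norm{\chi_E}_X$.

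\textbf{The separation step in (c).} You separate $f$ from the one-sided set $K=\{h\ge 0:\norm{h}_X\le 1\}\subset L^1(E)$ and then replace $g$ by $\abs{g}$. The lattice property only lets you replace $h\in K$ by $h\chi_{\{\operatorname{Re} g>0\}}\in K$. This gives $\sup_{h\in K}\int_R h(\operatorname{Re} g)^+\,d\mu\le\gamma$, where $\gamma\ge 0$ is the separating constant (note $0\in K$). Nothing controls $\int_R h(\operatorname{Re} g)^-\,d\mu$, which can be large, so $\norm{\abs{g}}_{X'}\le\gamma$ does not follow.

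Use $(\operatorname{Re} g)^+$ instead. Since $f\ge 0$ and $g$ vanishes outside $E$, every $h$ with $\norm{h}_X\le1$ gives $\abs{h}\chi_E\in K$, and you get
$\int_R f(\operatorname{Re} g)^+\,d\mu\ge\operatorname{Re}\int_R fg\,d\mu>\gamma\ge\norm{(\operatorname{Re} g)^+}_{X'}$.
Moreover $0<\norm{(\operatorname{Re} g)^+}_{X'}<\infty$, by definiteness and (P4) for $\norm{\cdot}_{X'}$. Testing $\norm{f}_{X''}$ with the normalized $(\operatorname{Re} g)^+$ then gives $\norm{f}_{X''}>1$.

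Alternatively, separate from the balanced set $\{h\in L^1(E):\norm{h}_X\le1\}$. Testing with $\abs{h}\,\overline{\operatorname{sgn} g}$ then does justify passing to $\abs{g}$.

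With these two corrections your argument is complete.
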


The last result regarding associate spaces that we need is the following version of Landau's resonance theorem, which was first proven in this generality in~\cite{Pesaquasi}.

\begin{thm}\label{associate_char}
    Let $\norm{\cdot}_X$ be a quasi-Banach function norm, let $X$ be the corresponding quasi-Banach function space, and let $\norm{\cdot}_{X'}$ and $X'$, respectively, be the associate norm of $\norm{\cdot}_X$ and the corresponding associate space. Then for $f\in \M(R,\mu)$ we have $f\in X'$ if and only if it satisfies 
    \begin{equation*}
        \int_R \abs{fg} \, d\mu < \infty,
    \end{equation*}
    for all $g\in X$ such that $\norm{g}_X\leq 1$. \par
    In particular, if $f\notin X'$, then there exists $g\in X$ satisfying $\norm{g}_X\leq 1$ such that 
    \begin{equation*}
        \int_R \abs{fg} \, d\mu = \infty.
    \end{equation*}
\end{thm}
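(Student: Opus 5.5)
The statement in question is Theorem~\ref{associate_char}, the Landau resonance theorem for quasi-Banach function spaces. One direction is trivial. If $f\in X'$, then H\"older's inequality (Theorem~\ref{holder}) gives $\int_R\abs{fg}\,d\mu\le\norm{g}_X\norm{f}_{X'}<\infty$ for every $g$ with $\norm{g}_X\le1$.

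For the converse I argue by contraposition, which also proves the ``in particular'' part. Suppose $f\notin X'$, so $\norm{f}_{X'}=\infty$. I want a single $g$ with $\norm{g}_X\le 1$ and $\int_R\abs{fg}\,d\mu=\infty$. By the definition of the associate functional, for every $n$ there is $g_n\ge0$ with $\norm{g_n}_X\le1$ and
\[
\int_R\abs{f}g_n\,d\mu>\lambda_n,
\]
where $\lambda_n$ is a sequence to be chosen. Let $C=C_X$ be the modulus of concavity. I set
\[
g:=\sum_{n=1}^\infty a_n g_n,\qquad a_n:=(2C)^{-n}.
\]
For the partial sums $G_N=\sum_{n\le N}a_ng_n$, Proposition~\ref{nekvindas_trick} yields
\[
\norm{G_N}_X\le\sum_{n\le N}C^n a_n\norm{g_n}_X\le\sum_n 2^{-n}\le 1.
\]
Since $G_N\nearrow g$ pointwise and all terms are nonnegative, the Fatou property (P3) gives $\norm{g}_X=\lim_N\norm{G_N}_X\le1$. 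Hence $g\in X$ with $\norm{g}_X\le1$.

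It remains to check the integral. For every $n$,
\[
\int_R\abs{fg}\,d\mu\ge a_n\int_R\abs{f}g_n\,d\mu> a_n\lambda_n.
\]
Choosing $\lambda_n:=n(2C)^n$ makes the right-hand side $n$, so the integral is infinite.

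The only genuine obstacle is that the quasi-triangle inequality does not pass directly to countable sums. This is handled by combining Proposition~\ref{nekvindas_trick}, whose growing constants $C^n$ are absorbed by the geometric weights $a_n$, with the Fatou property applied to the increasing partial sums. No completeness argument or assumption (P5) is needed.
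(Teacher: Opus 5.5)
Your proof is correct and complete. The paper does not prove this theorem itself; it imports it from \cite{Pesaquasi}, so there is no in-paper argument to compare against.

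Your argument is the direct resonance construction. The weights $(2C_X)^{-n}$ absorb the growing constants $C_X^n$ from Proposition~\ref{nekvindas_trick}. Applying (P3) to the increasing partial sums then upgrades the finite estimate to a countable Riesz--Fischer-type bound $\norm{\sum_n a_n g_n}_X \leq \sum_n C_X^n a_n \norm{g_n}_X$. This is the right tool for the generality stated.

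The familiar Banach-space alternative would not work here. That route approximates $\abs{f}$ by bounded truncations supported on sets of finite measure and then applies the uniform boundedness principle. It needs (P5) to guarantee those truncations lie in $X'$, and it breaks down without it. For instance, for $X=L^{1/2}$ on a nonatomic space one has $X'=\{0\}$, so no nonzero truncation lies in $X'$. Your argument uses only (Q1), (P3) and $\norm{\abs{g}}_X=\norm{g}_X$, and so covers that case as well.
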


\subsection{Nonincreasing rearrangement}

Although the goal of this text is to alleviate the need for rearrangements, we will now turn our attention to the nonincreasing rearrangement and rearrangement-invariant spaces because the theory will be useful to us. Again, the preferred resource for the nonincreasing rearrangement and rearrangement-invariant Banach function norms is~\cite[Chapter 2]{BennetSharpley}. In the case of quasinorms, we turn to the two articles~\cite{MuNePeTu} and~\cite{Pesaquasi}. \par
We first introduce the concepts of the distribution function and of the nonincreasing rearrangement. Notice that the nonincreasing rearrangement is simply the generalized inverse of the distribution function.

\begin{defn}
    Let $f\in \M(R,\mu)$. We define the \textsl{distribution function} of $f$ as the function
    \begin{equation*}
        f_*(s):= \mu(\{t\in R: \abs{f(t)}>s\}), \ s\in[0,\infty),
    \end{equation*}
    and we define the \textsl{nonincreasing rearrangement} of $f$ as the function
    \begin{equation*}
        f^*(t):= \inf\{s\in [0,\infty): f_*(s)\leq t\}, \ t\in[0,\infty).
    \end{equation*}
\end{defn}

The basic properties of these functions can be found in~\cite[Chapter 2, Proposition 1.3]{BennetSharpley} and~\cite[Chapter 2, Proposition 1.7]{BennetSharpley}. We consider these properties well-known enough to omit them and use them without explicit reference. \par

Sometimes we work with resonant measure spaces, which are defined below. Note that this definition was originally shown as a characterization of a more complicated definition; see \cite[Chapter 2, Theorem 2.7]{BennetSharpley}.
\begin{defn}
    A $\sigma$-finite measure space $(R,\mu)$ is \textsl{resonant} if it is nonatomic or completely atomic with atoms of equal measure.
\end{defn}

Another crucial notion is equimeasurability.
\begin{defn}
    We say that two functions $f\in\M(R,\mu)$ and $g\in \M(S,\nu)$ are \textsl{equimeasurable} if $f_*=g_*$.
\end{defn}
For analyzing the local behavior of a function, we will make quite heavy use of a theorem that will allow us to explicitly construct the level set of prescribed measure for a given function by \say{reconstructing} the function from its rearrangement. More information, including proofs for the following statements, can be obtained in~\cite[Chapter 2.7]{BennetSharpley}.

\begin{defn}
    Let $(R,\mu)$ and $(S,\nu)$ be two measure spaces. A measurable mapping $\sigma\colon R\to S$ is said to be a \textsl{measure-preserving transformation} if for every $\nu$-measurable set $E\subset S$ and its inverse image $\sigma^{-1}(E)$ it holds that $\mu(\sigma^{-1}(E))=\nu(E)$.
\end{defn}

Naturally, this transformation preserves equimeasurability.

\begin{prop}\label{preserve_equimeasurability}
    Let $(R,\mu)$ and $(S,\nu)$ be two arbitrary $\sigma$-finite measure spaces, and let $\sigma\colon R\to S$ be a measure-preserving transformation. For $f\in\M_+(S,\nu)$, we define $g:=f\circ \sigma$. Then $g\in\M_+(R,\mu)$, and $f$ and $g$ are equimeasurable.
\end{prop}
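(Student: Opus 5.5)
The plan is to check the two claims directly from the definitions: first measurability, then equality of the distribution functions.

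For measurability, I note that $\sigma$ is measurable as a map from $(R,\mu)$ to $(S,\nu)$, so $\sigma^{-1}(E)$ is $\mu$-measurable for every $\nu$-measurable $E\subset S$. Fix a Borel set $B\subset[0,\infty]$. Then
\begin{equation*}
    g^{-1}(B)=\sigma^{-1}\bigl(f^{-1}(B)\bigr),
\end{equation*}
and $f^{-1}(B)$ is $\nu$-measurable, so $g^{-1}(B)$ is $\mu$-measurable. Nonnegativity is inherited pointwise, hence $g\in\M_+(R,\mu)$.

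There is one subtlety. Elements of $\M_+$ are equivalence classes, so I must check that $g$ does not depend on the representative of $f$. If $f_1=f_2$ $\nu$-a.e., let $N:=\{f_1\neq f_2\}$, so $\nu(N)=0$. Then $\{f_1\circ\sigma\neq f_2\circ\sigma\}=\sigma^{-1}(N)$, and $\mu(\sigma^{-1}(N))=\nu(N)=0$. So $g$ is well defined a.e.

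For equimeasurability, fix $s\in[0,\infty)$. The level set of $g$ is the preimage of the level set of $f$:
\begin{equation*}
    \{t\in R: g(t)>s\}=\{t\in R: f(\sigma(t))>s\}=\sigma^{-1}\bigl(\{u\in S: f(u)>s\}\bigr).
\end{equation*}
Since $\sigma$ is measure-preserving, this gives
\begin{equation*}
    g_*(s)=\mu\bigl(\sigma^{-1}(\{f>s\})\bigr)=\nu(\{f>s\})=f_*(s).
\end{equation*}
As this holds for every $s$, we get $f_*=g_*$, which is exactly the definition of equimeasurability. This step holds even when the level sets have infinite measure, because the defining identity $\mu(\sigma^{-1}(E))=\nu(E)$ is an equality in $[0,\infty]$.

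I expect no real obstacle; the argument is definitional. The only point needing care is the well-definedness on equivalence classes, which I have handled above using that $\sigma^{-1}$ maps $\nu$-null sets to $\mu$-null sets.
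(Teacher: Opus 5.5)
Your proof is correct and is the standard argument. The paper gives no proof of its own and defers to \cite[Chapter 2.7]{BennetSharpley}, where equimeasurability is obtained in the same way, by writing $\{g>s\}=\sigma^{-1}(\{f>s\})$ and using that $\sigma$ preserves measure; your extra check that $g$ does not depend on the representative of $f$ (via $\mu(\sigma^{-1}(N))=\nu(N)=0$) is correct.
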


Next up is the promised theorem due to J.~V.~Ryff, which originates from~\cite{Ryff}.

\begin{thm}\label{ryff}
    Let $(R,\mu)$ be a finite nonatomic measure space and let $f\in \M_+(R,\mu)$. Then there exists a measure preserving transformation $\sigma\colon R\to (0,\mu(R))$ such that $f=f^*\circ \sigma$ a.e.
\end{thm}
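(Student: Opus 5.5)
The plan follows the classical construction (as in~\cite[Chapter 2, Theorem 7.5]{BennetSharpley}). Write $m:=\mu(R)$. The guiding idea is to send a point $x$ to the "position" that the value $f(x)$ occupies in the nonincreasing ordering of $f$. The natural first candidate is
\begin{equation*}
    \sigma_0(x):=\mu(\{y\in R: f(y)>f(x)\})=f_*(f(x)).
\end{equation*}
If every level set $\{f=s\}$ has measure zero, $\sigma_0$ already works. In general, however, $f$ may have plateaus. A set $E_s:=\{f=s\}$ with $\mu(E_s)>0$ is collapsed by $\sigma_0$ to the single point $f_*(s)$. Such $s$ are the values on which $f^*$ is constant on the interval $I_s:=[f_*(s),f_*(s)+\mu(E_s))$, and there are at most countably many of them, since the $E_s$ are disjoint and $\mu$ is finite. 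This includes $s=\infty$, for which $I_\infty=[0,\mu(\{f=\infty\}))$.

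\textbf{Step 1: spreading out each plateau.} Fix such an $s$. Using nonatomicity (Sierpi\'nski's theorem on the range of a nonatomic measure), I will construct a nested family $\{F(t)\}_{t\in[0,\mu(E_s)]}$ of measurable subsets of $E_s$ with $F(t_1)\subset F(t_2)$ for $t_1\le t_2$ and $\mu(F(t))=t$. Concretely, I would first define $F$ on the dyadic rationals by repeatedly splitting sets into halves of equal measure. For general $t$ I would set $F(t):=\bigcup_{q\le t}F(q)$, where $q$ ranges over dyadic rationals; continuity of measure then gives $\mu(F(t))=t$. Then define
\begin{equation*}
    \tau_s(x):=\inf\{t: x\in F(t)\}, \quad x\in E_s.
\end{equation*}
For this map $\tau_s^{-1}([0,t))\subset F(t)\subset \tau_s^{-1}([0,t])$. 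Since $\mu(F(t))=t$ and $t\mapsto\mu(F(t))$ is continuous, this yields $\mu(\tau_s^{-1}([0,t)))=t$, so $\tau_s$ is a measure-preserving map $E_s\to[0,\mu(E_s))$.

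\textbf{Step 2: global definition and measure preservation.} Set $\sigma(x):=\sigma_0(x)+\tau_{f(x)}(x)$ if $x$ lies in a plateau $E_s$, and $\sigma(x):=\sigma_0(x)$ otherwise, modifying on a null set so that the values lie in $(0,m)$.

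For $t\in(0,m)$, let $u:=f^*(t)$. I will show that, up to a null set,
\begin{equation*}
    \sigma^{-1}((0,t))=\{f>u\}\cup\bigl(\{f=u\}\cap F_u(t-f_*(u))\bigr),
\end{equation*}
where the second part is absent if $\mu(\{f=u\})=0$. In either case the right-hand side has measure $t$. For the points outside plateaus I will use that $f_*$ is right-continuous and that $\{x: \sigma_0(x)=c\}$ is null for nonplateau values. The sets $(0,t)$ form a $\pi$-system generating the Borel sets of $(0,m)$. A Dynkin $\pi$--$\lambda$ argument therefore extends $\mu(\sigma^{-1}(E))=\lambda(E)$ to all Borel $E$, and then to all Lebesgue measurable $E$ by completion. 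Measurability of $\sigma$ itself follows from the same description of $\sigma^{-1}((0,t))$.

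\textbf{Step 3: the identity $f=f^*\circ\sigma$ a.e.} Take $x$ with $f(x)=s$. If $s$ is a plateau value, then $\sigma(x)\in I_s$, and $f^*\equiv s$ on the interior of $I_s$; the endpoints carry zero measure because $\sigma$ is measure preserving. If $s$ is not a plateau value, then $\sigma(x)=f_*(s)$, and $f^*(f_*(s))\le s$ always holds. Strict inequality can only occur on a $\sigma$-preimage of the countable set of jump points of $f^*$, or on a null set of points, so equality holds a.e.

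\textbf{Main obstacle.} I expect the hardest part to be the bookkeeping in Step 2 at the boundaries. This means handling ties at the points where $f_*$ jumps, the value $f=\infty$, and the requirement that $\sigma$ land in the open interval, while checking that every discrepancy set is genuinely $\mu$-null. The construction of the nested family in Step 1 is standard, given the fact that a nonatomic measure attains every intermediate value on subsets.
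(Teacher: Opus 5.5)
Your proposal is correct and is essentially the classical argument the paper relies on: the paper gives no proof of its own, citing Ryff~\cite{Ryff} and \cite[Chapter 2.7]{BennetSharpley}, where the proof proceeds as yours does, using $f_*\circ f$ off the countably many level sets of positive measure and a measure-preserving map built from a nested family of sets on each such level set. Two minor points: it is cleaner to get measurability of $\sigma$ directly from $\sigma=f_*\circ f+\sum_s\tau_s\chi_{E_s}$, since you describe $\sigma^{-1}((0,t))$ only up to a null set, and the passage from Borel to Lebesgue-measurable target sets uses completeness of $\mu$, which the statement tacitly assumes.
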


Notice that this theorem only works with a finite measure space. We can eliminate this assumption if we consider a function that decays at infinity.

\begin{cor}\label{ryff_cor}
    Let $(R,\mu)$ be an arbitrary infinite nonatomic measure space, and let $f\in \M_+(R,\mu)$. If 
    \begin{equation*}
        \lim_{t\to \infty} f^*(t)=0,
    \end{equation*}
    then there exists a measure preserving transformation $\sigma\colon \supp f \to \supp f^*$ such that $f=f^*\circ \sigma$ a.e.~on $\supp f$.
\end{cor}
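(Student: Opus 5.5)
The approach is to reduce to Theorem~\ref{ryff} by exhausting $\supp f$ with sets of finite measure on which $f$ is bounded below. Since $f^*(t)\to 0$ as $t\to\infty$, every level set $\{\abs{f}>s\}$ with $s>0$ has finite measure, since $f_*(s)<\infty$ for each $s>0$. So $\supp f=\bigcup_n E_n$, where $E_n:=\{f>s_n\}$ for a sequence $s_n\searrow 0$. Correspondingly, $\supp f^*=[0,\mu(\supp f))$, and the equimeasurability of $f$ and $f^*$ gives $\{f^*>s_n\}=[0,f_*(s_n))$. I will choose the $s_n$ not to be values of $f$ taken on sets of positive measure (there are only countably many such values), so that the level sets behave well at their boundaries.

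\emph{Step 1: the pieces.} Fix the annuli $A_n:=E_n\setminus E_{n-1}=\{s_n<f\le s_{n-1}\}$ (with $s_0=\infty$) and the intervals $I_n:=[f_*(s_{n-1}),f_*(s_n))$. We have $\mu(A_n)=\lambda(I_n)<\infty$. Each $A_n$ with the restricted measure is a finite nonatomic measure space, so Theorem~\ref{ryff} applied to $f|_{A_n}$ gives a measure preserving $\sigma_n\colon A_n\to(0,\mu(A_n))$ with $f|_{A_n}=(f|_{A_n})^*\circ\sigma_n$ a.e.

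\emph{Step 2: identify the rearrangements.} The key computation is
\begin{equation*}
(f|_{A_n})^*(t)=f^*\bigl(t+f_*(s_{n-1})\bigr), \quad t\in(0,\mu(A_n)).
\end{equation*}
To check it, compare distribution functions: for $s\in[s_n,s_{n-1})$,
\begin{equation*}
\mu(\{f|_{A_n}>s\})=f_*(s)-f_*(s_{n-1}),
\end{equation*}
which is exactly the distribution function of $f^*$ restricted to $I_n$ and translated to start at $0$. Both functions are nonincreasing and right-continuous, so they coincide. I can then set $\sigma:=\sigma_n+f_*(s_{n-1})$ on $A_n$. This map sends $A_n$ into $I_n$, preserves measure there, and satisfies $f=f^*\circ\sigma$ a.e.\ on $A_n$.

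\emph{Step 3: glue.} Since the $A_n$ are disjoint and cover $\supp f$, and the $I_n$ are disjoint and cover $\supp f^*$, the glued $\sigma$ is measurable. For measurable $F\subset\supp f^*$,
\begin{equation*}
\mu(\sigma^{-1}(F))=\sum_n\mu(\sigma_n^{-1}(\ldots))=\sum_n\lambda(F\cap I_n)=\lambda(F),
\end{equation*}
so $\sigma$ is measure preserving. Only countably many null sets are discarded in the process.

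The main obstacle is the bookkeeping in Step 2: showing that $f^*$ restricted to $I_n$ really is the shifted rearrangement of the piece $f|_{A_n}$. The delicate points are the endpoint conventions, handled by choosing $s_n$ where $f$ has no level set of positive measure, and the open versus half-open target intervals, which differ from each other only by null sets.
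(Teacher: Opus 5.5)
Your argument is correct, and it is essentially the standard proof that the paper relies on without reproducing (it only refers to \cite[Chapter~2.7]{BennetSharpley}): split $\supp f$ into finite-measure level slabs, apply Theorem~\ref{ryff} on each slab, translate the resulting maps onto consecutive intervals, and glue. Your precaution of choosing the $s_n$ away from values of $f$ attained on sets of positive measure is harmless but unnecessary, because the identity $(f|_{A_n})^*(t)=f^*\bigl(t+f_*(s_{n-1})\bigr)$ for $t\in[0,\mu(A_n))$ follows directly from the definition of the rearrangement for any choice of $s_n\searrow 0$.
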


We now define a subclass of (quasi-)Banach function spaces where the size of some function depends only on its distribution function. Again, a systematic study of the normed version of these spaces can be found in~\cite[Chapter 2.4]{BennetSharpley}. Generalizations of important results to quasinorms can be found in the recent paper~\cite{MuNePeTu}.

\begin{defn}
    Let $\norm{\cdot}_X$ be a quasi-Banach function norm and $X$ its corresponding quasi-Banach function space. If for every $f,g\in\M(R,\mu)$, $\norm{\cdot}_X$ satisfies
    \begin{enumerate}
        \item[(P6)] $\norm{f}_X=\norm{g}_X$ whenever $f_*=g_*$,
    \end{enumerate}
    we say that $\norm{\cdot}_X$ and $X$ are \textsl{rearrangement-invariant}, often abbreviated to r.i.
\end{defn}

The property (P6) has deep consequences, one of which is the following representation theorem. Essentially, we can replace an r.i.~quasinorm over any resonant measure space with a quasinorm over the real half line with the Lebesgue measure. The original result for norms is the classical Luxemburg representation theorem; see~\cite[Chapter 2, Theorem 4.10]{BennetSharpley}. The presented version for quasinorms was obtained in~\cite[Proposition 3.2]{pesaabs}, which is based on the construction from~\cite[Theorem 3.1]{MuNePeTu}. We will often work with the representation space without explicitly mentioning this theorem.

\begin{thm}
    Let $(R,\mu)$ be a resonant measure space and let $\norm{\cdot}_X$ be an r.i.~quasi-Banach function norm over $\M(R,\mu)$. Then there exists an r.i.~quasi-Banach function norm $\norm{\cdot}_{\overline{X}}$ on $\M([0,\infty), \lambda)$ such that for all $f\in\M(R,\mu)$, it holds $\norm{f}_X=\norm{f^*}_{\overline{X}}$. \par 
    Furthermore, $\norm{\cdot}_{\overline{X}}$ has the property (P5) whenever $\norm{\cdot}_X$ does and if $(R,\mu)$ is infinite and nonatomic, then $\norm{\cdot}_{\overline{X}}$ is uniquely determined.
\end{thm}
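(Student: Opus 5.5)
Since the representation theorem is cited from the literature, I will give a self-contained sketch in the spirit of \cite[Theorem 3.1]{MuNePeTu} and \cite[Proposition 3.2]{pesaabs}. The idea is to transport functions on $[0,\infty)$ back to $R$ and read off the norm there. Let $\alpha:=\mu(R)$. For $g\in\M_+([0,\infty),\lambda)$ I will first look for $\tilde g\in\M_+(R,\mu)$ with $\tilde g_*=(g^*\chi_{[0,\alpha)})_*$, and then set
\begin{equation*}
    \norm{g}_{\overline{X}}:=\norm{\tilde g}_X .
\end{equation*}
The existence of $\tilde g$ is where resonance enters.
\begin{itemize}
    \item In the nonatomic case, I will build $\tilde g$ from a measure-preserving map $\sigma\colon R\to(0,\alpha)$ and put $\tilde g=g^*\circ\sigma$. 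When $\alpha<\infty$, the map is supplied by Theorem~\ref{ryff} applied to a function with strictly decreasing rearrangement. When $\alpha=\infty$, I will exhaust $R$ by disjoint pieces of measure one and glue together the corresponding Ryff maps onto $(k,k+1)$. Proposition~\ref{preserve_equimeasurability} then gives the required equimeasurability.
    \item In the completely atomic case with atoms of mass $m$, I will instead average $g^*$ over the intervals $[km,(k+1)m)$, following \cite{MuNePeTu}, and place these averages on the atoms in decreasing order.
\end{itemize}
By (P6), the value of $\norm{g}_{\overline{X}}$ does not depend on the choice of $\tilde g$.

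Next I will check the axioms. Property (P6) for $\overline{X}$ holds by construction, since $g^*$ depends only on $g_*$. Homogeneity is immediate. The lattice property follows because $g\le h$ implies $g^*\le h^*$, so $\tilde g\le\tilde h$ when both are built with the same $\sigma$. For the Fatou property, $g_n\nearrow g$ gives $g_n^*\nearrow g^*$, hence $\tilde g_n\nearrow\tilde g$, and (P3) for $X$ finishes it. Property (P4) reduces to $\norm{\chi_E}_X$ for $\mu(E)=\lambda(F)$, which is finite by Theorem~\ref{lenka4.4}. Transferring (P5) uses $\int_R\tilde g\,d\mu=\int g^*\chi_{[0,\alpha)}$ together with the Hardy–Littlewood inequality.

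Finally, for $f\in\M(R,\mu)$, the function $\widetilde{f^*}$ is equimeasurable with $f$, so (P6) gives $\norm{f}_X=\norm{f^*}_{\overline X}$.

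I expect the main obstacle to be the quasi-triangle inequality, since $(g+h)^*\not\le g^*+h^*$. My first attempt will use
\begin{equation*}
    (g+h)^*(t)\le g^*(t/2)+h^*(t/2)
\end{equation*}
together with boundedness of the dilation $D_2$ on $X$. That boundedness I would get by splitting $R$ into two pieces equivalent to halves, which needs nonatomicity. In the atomic case I will rely instead on the averaging argument of \cite{MuNePeTu}; I am less sure that step goes through cleanly.

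Uniqueness for infinite nonatomic $R$ is the easy part. There every nonincreasing right-continuous $g^*$ with the right decay is realized as $f^*$ for some $f$ on $R$. Any other representation $\overline{X}_1$ then satisfies $\norm{g}_{\overline X_1}=\norm{g^*}_{\overline X_1}=\norm{f}_X$ for all such $g$. For the remaining $g$, with $g^*\ge c>0$ at infinity, both representations are infinite because of $\chi_R$ and the Fatou property.
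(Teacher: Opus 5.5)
Your existence construction follows the standard route behind the result the paper cites from \cite{pesaabs} and \cite{MuNePeTu}; the paper gives no proof of its own. The existence part works, but there is a genuine error in the uniqueness paragraph.

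You claim that for $g$ with $g^*\ge c>0$ at infinity both representations are infinite. This is false whenever $\chi_R\in X$. For $X=L^\infty(R)$ or $X=L^1+L^\infty$ one has $\norm{\chi_{[0,\infty)}}_{\overline{X}}=\norm{\chi_R}_X<\infty$, so for such $g$ your argument shows nothing.

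The case split is in fact unnecessary. Take the glued measure-preserving map $\sigma\colon R\to(0,\infty)$ you already built and set $f:=g^*\circ\sigma$. By Proposition~\ref{preserve_equimeasurability}, $f$ is equimeasurable with $g^*$ for \emph{every} $g$. Hence any r.i.\ representation $\overline{X}_1$ satisfies $\norm{g}_{\overline{X}_1}=\norm{g^*}_{\overline{X}_1}=\norm{f^*}_{\overline{X}_1}=\norm{g^*\circ\sigma}_X$. Decay is needed only to write a \emph{given} $f$ as $f^*\circ\sigma$ (Corollary~\ref{ryff_cor}), not to realize a given $g^*$. Alternatively, apply (P3) of both norms to $g^*\chi_{[0,n)}\nearrow g^*$.

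You left open the quasi-triangle inequality in the atomic case. It goes through with the same dilation idea, and some such idea is required, because the majorization $\int_0^t(g+h)^*\le\int_0^t g^*+\int_0^t h^*$ alone does not control a quasinorm. Put $a_k(u):=\frac1m\int_{km}^{(k+1)m}u^*$ and $j=\lfloor k/2\rfloor$. The pointwise bound $(g+h)^*(t)\le g^*(t/2)+h^*(t/2)$ gives $a_k(g+h)\le\frac2m\int_{km/2}^{(k+1)m/2}(g^*+h^*)\le 2\bigl(a_j(g)+a_j(h)\bigr)$, since $[km/2,(k+1)m/2)\subset[jm,(j+1)m)$. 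The doubled sequence $(b_{\lfloor k/2\rfloor})_k$ on the atoms is the sum of a copy of $(b_k)$ on the even atoms and a copy on the odd atoms. Each copy is equimeasurable with $(b_k)$, or with a truncation of it if there are finitely many atoms, so the doubled sequence has quasinorm at most $2C_X\norm{b}_X$.

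There are also two smaller slips.
\begin{enumerate}
\item For (P5) in the infinite case, $\int_R\tilde g\,d\mu$ is not controlled by $\norm{\tilde g}_X$. Use instead $\int_F g\le\int_0^{\lambda(F)}g^*=\int_{\sigma^{-1}((0,\lambda(F)))}\tilde g\,d\mu$ with the fixed $\sigma$, so the constant depends only on $\lambda(F)$. When $\beta=\lambda(F)>\alpha=\mu(R)$, add $\int_0^\beta g^*\le\frac{\beta}{\alpha}\int_0^\alpha g^*$.
\item (P4) follows directly from (P4) of $X$. Theorem~\ref{lenka4.4} is not needed and does not apply in the atomic case.
\end{enumerate}
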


Quite recently, a new characterization of the absolute continuity of the norm in r.i.~spaces utilizing amalgam ideas has been proven in~\cite{pesaabs}. It allows us to narrow our focus down to merely two particular sequences of sets while showing the absolute continuity of the norm of a function. Note that in the case where the representation space $\overline{X}$ is not unique, this result and others are highly dependent on its specific construction. Hence, from this point on we always assume that the representation from~\cite[Definition 3.1]{pesaabs} is used.

\begin{thm}\label{ri_ac_char}
    Let $\norm{\cdot}_X$ be an r.i.~quasi-Banach function norm and $X$ its corresponding r.i.~quasi-Banach function space. Then $f\in X$ has an absolutely continuous norm if and only if
    \begin{equation*}
        \lim_{n\to \infty} \norm{f^*\chi_{[0,n^{-1})}}_{\overline{X}}=0,
    \end{equation*}
    and
    \begin{equation*}
        \lim_{n\to \infty} \norm{f^*\chi_{[n,\infty)}}_{\overline{X}}=0.
    \end{equation*}
\end{thm}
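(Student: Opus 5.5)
The plan is to reduce everything to the representation space $\overline{X}$ through level sets of $f$, and to treat both directions with the three-piece decomposition \say{peak / middle / tail}. I first treat the case where $(R,\mu)$ is nonatomic and infinite. The completely atomic case with atoms of equal measure is handled the same way, with $\sigma$ replaced by an enumeration of the atoms. In the finite case, the second condition is vacuous, or is handled through the construction of $\overline{X}$ in~\cite{pesaabs}.

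\textbf{Necessity.} Let $f\in X_a$.
\begin{enumerate}[label=(\arabic*)]
\item I first show that $f^*(t)\to0$ as $t\to\infty$. Otherwise there is $s>0$ with $\mu(\{\abs f>s\})=\infty$. Exhaust $R$ by sets $R_n\nearrow R$ of finite measure and put $E_n:=\{\abs f>s\}\setminus R_n$. Then $\chi_{E_n}\to0$ a.e., while each $fE_n$ dominates $s\chi_{F}$ for sets $F$ of arbitrarily large measure. By (P6) and the lattice property this keeps $\norm{f\chi_{E_n}}_X\ge s\,\norm{\chi_{[0,1)}}_{\overline X}>0$, which is a contradiction.
\item Corollary~\ref{ryff_cor} now gives a measure-preserving $\sigma$ with $f=f^*\circ\sigma$ on $\supp f$. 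Set $E_n:=\sigma^{-1}([0,1/n))$ and $F_n:=\sigma^{-1}([n,\infty))$. These sets decrease, $\mu(E_n)\to 0$ and $\bigcap_n F_n=\emptyset$, so $\chi_{E_n},\chi_{F_n}\to0$ a.e.
\item By Proposition~\ref{preserve_equimeasurability}, $f\chi_{E_n}$ is equimeasurable with $f^*\chi_{[0,1/n)}$. Hence $\norm{f^*\chi_{[0,1/n)}}_{\overline X}=\norm{f\chi_{E_n}}_X\to0$, and the same argument works for the tail sets $F_n$.
\end{enumerate}

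\textbf{Sufficiency.} Assume both limits vanish and let $\chi_{E_n}\to0$ a.e. We may assume $f\neq0$.
\begin{enumerate}[label=(\arabic*)]
\item The second condition forces $f^*(t)\to0$, since otherwise $\norm{f^*\chi_{[n,\infty)}}_{\overline X}\ge c\norm{\chi_{[n,n+1)}}_{\overline X}=c\norm{\chi_{[0,1)}}_{\overline X}$. So $\sigma$ from Corollary~\ref{ryff_cor} is again available.
\item Fix $k$ and split $R$ into $A_k=\sigma^{-1}([0,1/k))$, $C_k=\sigma^{-1}([k,\infty))$ and the middle part $B_k$, which is the rest of $\supp f$.
\item By the quasi-triangle inequality, the lattice property and equimeasurability,
\begin{equation*}
\norm{f\chi_{E_n}}_X\le C_X\norm{f^*\chi_{[0,1/k)}}_{\overline X}+C_X^2\norm{f^*\chi_{[k,\infty)}}_{\overline X}+C_X^2 f^*(1/k)\,\norm{\chi_{E_n\cap B_k}}_X .
\end{equation*}
The first two terms are small for large $k$ by assumption.
\item For fixed $k$, $B_k$ has measure at most $k$, so dominated convergence gives $\mu(E_n\cap B_k)\to0$. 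The last term is bounded by $f^*(1/k)\,\varphi(\mu(E_n\cap B_k))$, where $\varphi(t)=\norm{\chi_{[0,t)}}_{\overline X}$.
\end{enumerate}

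\textbf{Main obstacle.} The key step is to show $\varphi(t)\to0$ as $t\to0^+$, since e.g.~$L^\infty$ shows this fails in general. Here I use the first condition together with $f\neq0$. Choose $a>0$ with $f^*(a)>0$. For $n>1/a$ we have $f^*\ge f^*(a)$ on $[0,1/n)$, so
\begin{equation*}
f^*(a)\,\varphi(1/n)\le\norm{f^*\chi_{[0,1/n)}}_{\overline X}\to0 .
\end{equation*}
Monotonicity of $\varphi$ then gives $\varphi(t)\to0$. Letting first $n\to\infty$ and then $k\to\infty$ concludes the proof.
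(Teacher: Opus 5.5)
Your argument is correct for nonatomic measure spaces. The finite nonatomic case is also fine, since Ryff's theorem applies directly and the tail condition is vacuous. However, the claim that the completely atomic case is \say{handled the same way, with $\sigma$ replaced by an enumeration of the atoms} is false, and there the necessity of the first condition is a genuine gap.

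If the atoms have measure $\beta>0$ and $1/n<\beta$, the set $\sigma^{-1}([0,1/n))$ cannot be formed. More to the point, $f^*\chi_{[0,1/n)}$ is then equimeasurable with no function on $R$, so $\norm{f^*\chi_{[0,1/n)}}_{\overline X}$ is not determined by $\norm{\cdot}_X$ at all; it depends on how $\overline X$ was built. With another representation that is equally valid in the sense of the representation theorem, the statement fails. For $X=\ell^\infty$ on $\N$ one may take $\overline X=L^\infty$, since $\norm{f}_{\ell^\infty}=\norm{f^*}_{L^\infty}$. Then $f=\chi_{\{1\}}$ lies in $X_a$, because every sequence $E_n$ with $\chi_{E_n}\to0$ a.e.\ eventually misses the point $1$. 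Yet $\norm{f^*\chi_{[0,1/n)}}_{L^\infty}=1$ for every $n$. This is exactly why the paper fixes the representation from \cite{pesaabs}.

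In the atomic case, necessity of the first condition amounts to $\norm{\chi_{[0,t)}}_{\overline X}\to0$ as $t\to0^+$. One direction follows by testing with $f$ the characteristic function of a single atom; the other because $f^*$ is constant on $[0,\beta)$. This property must be read off from that specific construction, and no enumeration argument can supply it. Atomic sufficiency is easy, since $E_n$ eventually misses the finitely many atoms in $\sigma^{-1}([0,k))$. As written, though, your proof covers only nonatomic spaces.

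In the nonatomic case your route also differs from the one available in the paper. The paper only quotes this statement from \cite{pesaabs}, but it can be recovered from Theorem~\ref{ac_char} combined with Propositions~\ref{local_ri_equiv} and~\ref{global_norm_rearrangement}. There, sufficiency uses just two pieces: a set $E_0$ of finite measure from $\delta_Xf(n)\to0$, and uniform smallness on all sets of small measure from $\sigma_Xf(1/n)\to0$. Translating the tail condition, however, relies on the boundedness of the dilation operator and, when $\lim_{t\to\infty}f^*(t)>0$, on \cite[Theorem 4.16]{MuNePeTu}.

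You bypass all of that by proving $\lim_{t\to\infty}f^*(t)=0$ directly in both directions. Ryff's map then realizes $f^*\chi_{[0,1/n)}$ and $f^*\chi_{[n,\infty)}$ exactly as $f\chi_{E_n}$ and $f\chi_{F_n}$. This is more elementary and self-contained, though it gives nothing outside the r.i.\ setting.

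Your \say{main obstacle} is self-inflicted. Since $(f\chi_G)^*\le f^*\chi_{[0,\mu(G))}$ for every measurable $G$, you have $\norm{f\chi_G}_X\le\norm{f^*\chi_{[0,\mu(G))}}_{\overline X}$; this is the easy half of Proposition~\ref{local_ri_equiv}. So the peak and middle pieces can be handled together via $G=E_n\cap\sigma^{-1}([0,k))$, whose measure tends to $0$. The detour through $\varphi(0^+)=0$ is correct but unnecessary.
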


A quantitative version of Theorem~\ref{ri_ac_char} for Banach function spaces has also appeared earlier in~\cite{Kiwerski}. \par
Lastly, we introduce a quintessential example of r.i.~spaces, which is the class of Lorentz spaces. Notice that if $p=q$, we obtain the classical Lebesgue space $L^p(R,\mu)$ (see \cite[Chapter 2, Proposition 1.8]{BennetSharpley}).
\begin{defn}
    Let $p,q\in (0,\infty]$. For $f\in\M(R,\mu)$ we define the \textsl{Lorentz functional}
    \begin{equation*}
        \norm{f}_{L^{p,q}}:=\begin{dcases}
           \left( \int_0^\infty \left(t^{\frac{1}{p}} f^*(t)\right)^q \frac{dt}{t} \right)^{\frac{1}{q}}, &q\in (0,\infty), \\
           \sup_{t\in (0,\infty)} t^{\frac{1}{p}} f^*(t), &q=\infty,
        \end{dcases}
    \end{equation*}
    and, with it, the corresponding \textsl{Lorentz space} $L^{p,q}(R,\mu)$.
\end{defn}

\section{Weak Fatou property}\label{weak_fatou_sec}
In this section, we focus on developing tools to handle quasinorms that only satisfy the weak Fatou property. We do so mainly by slightly generalizing and adjusting the existing results presented in the book~\cite[Chapter 15, §65 and §66]{Integration}. To start with, we define the weak Fatou property. Note that if the functional has the property (P2), we can replace the limit superior with an ordinary limit.
\begin{defn}
    We say that a functional ${\norm{\cdot}_X}\colon \M_+(R,\mu)\rightarrow [0,\infty]$ has the \textsl{weak Fatou property} if for every $f_n,f\in \M_+(R,\mu)$  satisfying $f_n\nearrow f$ a.e.~and 
    \begin{equation*}
        \limsup_{n\to\infty} \norm{f_n}_X < \infty,
    \end{equation*}
    it holds that $\norm{f}_X<\infty$.
\end{defn}
The next theorem, originally due to I. Amemiya~\cite{Amemiya}, gives a characterization of the weak Fatou property on quasinormed spaces. We present a slightly modified version of the proof to fit into our context of quasinorms. 
\begin{thm}\label{weak_fatou_char}
    Let ${\norm{\cdot}_X}\colon \M_+(R,\mu)\to [0,\infty]$ be a functional that satisfies (Q1), (P2). Then $\norm{\cdot}_X$ has the weak Fatou property if and only if there exists $C\geq1$ such that for every $f_n,f\in \M_+(R,\mu)$ satisfying $f_n\nearrow f$ a.e.~and
    \begin{equation*}
        \lim_{n\to\infty} \norm{f_n}_X < \infty,
    \end{equation*}
    it holds that
    \begin{equation*}
        \norm{f}_X \leq C \lim_{n\to\infty} \norm{f_n}_X .
    \end{equation*}
\end{thm}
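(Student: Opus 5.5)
The reverse implication is immediate: if the inequality holds with some $C$, then $f_n\nearrow f$ with $\lim_n\norm{f_n}_X<\infty$ forces $\norm{f}_X\le C\lim_n\norm{f_n}_X<\infty$. By (P2) the sequence $\norm{f_n}_X$ is nondecreasing, so the limit exists and coincides with the limit superior in the definition of the weak Fatou property. The substance is therefore the forward implication. I will argue it by contradiction, following Amemiya's idea and replacing the triangle inequality by Proposition~\ref{nekvindas_trick}.

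Suppose $\norm{\cdot}_X$ has the weak Fatou property but no constant $C$ works. Then for every $k\in\N$ there is a sequence $f^{(k)}_n\nearrow f^{(k)}$ a.e.\ with $\lim_n\norm{f^{(k)}_n}_X<\infty$ and
\begin{equation*}
\norm{f^{(k)}}_X> \lambda_k \lim_n\norm{f^{(k)}_n}_X,
\end{equation*}
where $\lambda_k$ grows as fast as we like; I will take $\lambda_k = (2C_X)^{2k}$ or larger. The limit on the right must be positive, since otherwise each $f^{(k)}_n=0$ a.e.\ and then $f^{(k)}=0$. So by (Q1)(a) we may normalize to $\lim_n\norm{f^{(k)}_n}_X=1$ and $\norm{f^{(k)}}_X>\lambda_k$; this covers the case $\norm{f^{(k)}}_X=\infty$ as well. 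Now combine the sequences with geometric weights:
\begin{equation*}
g_n:=\sum_{k=1}^n a_k f^{(k)}_n,\qquad g:=\sum_{k=1}^\infty a_k f^{(k)},
\end{equation*}
with $a_k=(2C_X)^{-k}$. The monotonicity of each $f^{(k)}_n$ in $n$, together with the extra nonnegative term at each step, gives $g_n\nearrow g$ a.e.

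Applying Proposition~\ref{nekvindas_trick} and $\norm{f^{(k)}_n}_X\le 1$ (which follows from (P2)) yields
\begin{equation*}
\norm{g_n}_X\le \sum_{k=1}^n C_X^k a_k \norm{f^{(k)}_n}_X\le \sum_{k} 2^{-k}\le 1,
\end{equation*}
so $\sup_n\norm{g_n}_X<\infty$. The weak Fatou property then gives $\norm{g}_X<\infty$. On the other hand, $g\ge a_k f^{(k)}$ for every $k$, so (P2) gives
\begin{equation*}
\norm{g}_X\ge a_k\norm{f^{(k)}}_X> a_k\lambda_k=(2C_X)^{k}\to\infty,
\end{equation*}
which is a contradiction.

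The main obstacle is balancing the weights. The $a_k$ must decay fast enough to beat the growth $C_X^k$ that Proposition~\ref{nekvindas_trick} introduces for partial sums of the quasinorm. At the same time, $\lambda_k$ must outrun $a_k^{-1}$, and we are free to choose $\lambda_k$ that large precisely because we assume no constant works. The remaining technical points are routine: checking that $g_n\nearrow g$ holds almost everywhere after discarding a countable union of null sets, and noting that Proposition~\ref{nekvindas_trick} applies to finite sums of possibly infinite-valued functions via (P2).
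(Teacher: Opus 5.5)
Your proof is correct and follows essentially the same route as the paper. Both argue by contradiction: you combine the counterexample sequences diagonally as $g_n=\sum_{k\le n}(\text{scaled } f^{(k)}_n)$, bound $\norm{g_n}_X$ uniformly via Proposition~\ref{nekvindas_trick}, invoke the weak Fatou property, and use (P2) to force $\norm{g}_X=\infty$. The only difference is bookkeeping: the paper absorbs the weights into the normalization $\lim_n\norm{f_{n,k}}_X=k^{-2}C_X^{-k}$ against a blow-up factor $k^3C_X^k$, whereas you normalize to $1$ and use explicit geometric weights $(2C_X)^{-k}$ against $\lambda_k=(2C_X)^{2k}$.
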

\begin{proof}
The sufficiency is clear. For necessity, suppose that no such $C\geq 1$ exists. Then there exist functions $f_{n,k}, f_k \in \M_+(R,\mu)$ such that $f_{n,k}\nearrow f_k$ a.e.~and
\begin{equation*}
    \norm{f_k}_X > k^3 C_X^k \lim_{n\to\infty} \norm{f_{n,k}}_X,
\end{equation*}
for every $k\in\N$. Note that it is impossible that $\lim_n \norm{f_{n,k}}_X=0$ for some $k\in\N$ because then by the monotonicity of $f_{n,k}$ and (Q1) of $\norm{\cdot}_X$, $f_{n,k}=0$ a.e., for all $n\in\N$, and so $f_{k}=0$ a.e.,~which makes the inequality not strict. Hence, multiplying by appropriate constants, we may assume that $\lim_n \norm{f_{n,k}}_X=k^{-2} C_X^{-k}$, and so $\norm{f_k}_X>k$, for all $k\in\N$. Now, for $n\in\N$, define
\begin{equation*}
    g_n:=\sum_{k=1}^n f_{n,k}.
\end{equation*}
Then $g_n$ is clearly an increasing sequence, and by Proposition~\ref{nekvindas_trick} we calculate that
\begin{equation*}
    \lim_{n\to\infty}\norm{g_n}_X \leq \lim_{n\to\infty}\sum_{k=1}^{n} C_X^{k} \norm{f_{n,k}}_X \leq \sum_{k=1}^{\infty} \frac{1}{k^2}<\infty.
\end{equation*}
Hence, the pointwise limit $g$ of $g_n$ satisfies $\norm{g}_X<\infty$ by the weak Fatou property. However,
\begin{equation*}
    g=\lim_{n\to\infty} g_n \geq \lim_{n\to\infty} f_{n,k}=f_k,
\end{equation*}
for any $k\in\N$, so by (P2) of $\norm{\cdot}_X$,
\begin{equation*}
    \norm{g}_X\geq\norm{f_k}_X>k,
\end{equation*}
for any $k\in\N$, which yields a contradiction.
\end{proof}

We will show that if a quasinorm on a space $X$ has the weak Fatou property, we can construct an appropriate equivalent quasinorm that has the property (P3). Hence, we may define spaces by quasinorms that only have the weak Fatou property and recover most of the available results for quasi-Banach function spaces using this equivalence. The idea of this construction was originally proposed by G.~G.~Lorentz in his unpublished work and was later published in the book~\cite[Chapter 15, §66]{Integration}.

\begin{defn}
    Let ${\norm{\cdot}_X}\colon \M_+(R,\mu)\to [0,\infty]$ be a functional that satisfies (Q1), (P2). We denote
    \begin{equation*}
        \norm{f}_{\fat(X)}:=\inf\left\{\lim_{n\to\infty} \norm{f_n}_X: f_n\nearrow f, \text{ } f_n\in\M_+(R,\mu) \text{ for every } n\in\N \right\}.
    \end{equation*}
\end{defn}

We will now show that this construction has all the desired properties.

\begin{thm}\label{fat_fatou}
    Let ${\norm{\cdot}_X}\colon \M_+(R,\mu)\to [0,\infty]$ be a functional that satisfies (Q1), (P2). Then $\norm{\cdot}_{\fat(X)}$ has the property (P3).
\end{thm}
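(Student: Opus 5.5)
We must show that if $f_n\nearrow f$ a.e.\ with $f_n,f\in\M_+(R,\mu)$, then $\norm{f_n}_{\fat(X)}\nearrow\norm{f}_{\fat(X)}$. The plan has two parts: a monotonicity step that gives one inequality, and a diagonal construction that gives the reverse one.

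First we check that $\norm{\cdot}_{\fat(X)}$ is monotone. If $g\le h$ a.e.\ and $h_k\nearrow h$, then $\min\{h_k,g\}\nearrow g$. By (P2), $\norm{\min\{h_k,g\}}_X\le\norm{h_k}_X$, so $\norm{g}_{\fat(X)}\le\norm{h}_{\fat(X)}$. Hence $\norm{f_n}_{\fat(X)}$ is nondecreasing with limit $L\le\norm{f}_{\fat(X)}$. It remains to show $\norm{f}_{\fat(X)}\le L$, and we may assume $L<\infty$.

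Fix $\varepsilon>0$. For each $n$, choose an approximating sequence $f_{n,k}\nearrow_k f_n$ with
\begin{equation*}
    \lim_{k\to\infty}\norm{f_{n,k}}_X\le\norm{f_n}_{\fat(X)}+\varepsilon\le L+\varepsilon.
\end{equation*}
Define the diagonal maxima
\begin{equation*}
    g_k:=\max\{f_{1,k},\dots,f_{k,k}\}.
\end{equation*}
This sequence is nondecreasing in $k$, since each $f_{n,k}$ increases in $k$ and more terms enter the maximum. For the limit, $g_k\le\max_{n\le k} f_n=f_k\le f$. Conversely, for fixed $n$ and $k\ge n$ we have $g_k\ge f_{n,k}\to f_n$, and $f_n\nearrow f$. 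Therefore $g_k\nearrow f$ a.e.

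The main obstacle is bounding $\norm{g_k}_X$. The maximum of $k$ functions cannot be controlled by $\norm{\cdot}_X$ directly through the quasi-triangle inequality without constants that blow up. The fix is to replace the maximum by a single function. Set $h_{m,k}:=\max\{f_{1,k},\dots,f_{m,k}\}$ and, for fixed $k$, compare $g_k$ with the $k$-th level function $f_{k,j}$ of the $k$-th sequence. Rather than work with this comparison, we modify the construction so the approximants are nested in $n$. We replace $f_{n,k}$ by
\begin{equation*}
    \tilde f_{n,k}:=\min\Bigl\{f_n,\max_{m\le n} f_{m,k}\Bigr\}.
\end{equation*}
This is still increasing in $k$ with limit $f_n$, because $f_m\le f_n$ for $m\le n$. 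However, its norm is no longer controlled by that of $f_{n,k}$ alone.

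A cleaner route is to choose the sequences inductively so that they are nested. Since $f_{n}\le f_{n+1}$, we can replace the approximants of $f_{n+1}$ by $\max\{f_{n,k},f_{n+1,k}\}$, but this again introduces a maximum. So the step needing care is exactly the following: given $f_{n,k}\nearrow_k f_n$ and $f_{n+1,j}\nearrow_j f_{n+1}\ge f_n$, we want a single sequence approximating $f_{n+1}$ that dominates (eventually) the earlier one without increasing the limit norm by more than $\varepsilon 2^{-n}$.

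If that inductive nesting fails, the fallback is the diagonal sequence $g_k:=f_{k,k}$ with $k$ chosen along a subsequence, where for each $n$ the index $k_n$ is selected increasing so that $f_{n,k_n}\ge f_{n-1,k_{n-1}}$ up to sets of small measure. On a $\sigma$-finite space this can be arranged via Egorov-type arguments, and the small exceptional sets are then absorbed using (P2). In either version we obtain an increasing sequence with limit $f$ whose $X$-norms are bounded by $L+2\varepsilon$. Hence $\norm{f}_{\fat(X)}\le L+2\varepsilon$, and letting $\varepsilon\to0$ gives $\norm{f}_{\fat(X)}\le L$, which completes (P3).
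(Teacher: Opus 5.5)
Your monotonicity step is correct and gives $\lim_n\norm{f_n}_{\fat(X)}\le\norm{f}_{\fat(X)}$. The reverse inequality, which is the actual content of the theorem, is never established.

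You correctly see that the diagonal maxima $g_k=\max\{f_{1,k},\dots,f_{k,k}\}$ cannot be controlled in $\norm{\cdot}_X$, but none of your repairs fixes this:
\begin{itemize}
\item The modified approximants $\tilde f_{n,k}$ are just $\max_{m\le n}f_{m,k}$, because $f_{m,k}\le f_m\le f_n$. So they have exactly the same problem.
\item The inductive nesting is only stated as a wish. You give no way of dominating the earlier approximants other than taking maxima.
\item The ``fallback'' fails as described. Choosing $k_n$ with $f_{n,k_n}\ge f_{n-1,k_{n-1}}$ off sets of small measure does not produce a monotone sequence. The exceptional sets also cannot be ``absorbed using (P2)'': (P2) only compares functions pointwise, and with no absolute continuity assumed, nothing makes $\norm{\cdot}_X$ small on sets of small measure. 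Any patch that restores monotonicity there brings back the uncontrolled maxima.
\end{itemize}
Hence your concluding sentence, claiming an increasing sequence tending to $f$ with $X$-norms at most $L+2\varepsilon$, is asserted rather than proved.

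The paper does not reproduce this proof (it cites Zaanen). A standard way to close the gap is to use tail infima instead of maxima.

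Take $R_m\nearrow R$ with $\mu(R_m)<\infty$. Since $\min\{f_{m,k},m\}\nearrow_k\min\{f_m,m\}$ a.e., continuity of $\mu$ from above on $R_m$ lets you pick $k_m$ such that $A_m:=\{x\in R_m:\min\{f_{m,k_m}(x),m\}<\min\{f_m(x),m\}-1/m\}$ satisfies $\mu(A_m)<2^{-m}$. By Borel--Cantelli, almost every point lies in only finitely many $A_m$, so $\liminf_m f_{m,k_m}\ge f$ a.e. Together with $f_{m,k_m}\le f_m\le f$, this gives $f_{m,k_m}\to f$ a.e.

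Now set $w_n:=\inf_{m\ge n}f_{m,k_m}$. This sequence is nondecreasing by construction, and $w_n\nearrow\liminf_m f_{m,k_m}=f$ a.e. Since $w_n\le f_{n,k_n}$, (P2) gives $\norm{w_n}_X\le\norm{f_{n,k_n}}_X\le\lim_k\norm{f_{n,k}}_X\le L+\varepsilon$. Therefore $\norm{f}_{\fat(X)}\le L+\varepsilon$.

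The key point is that each $w_n$ is dominated by a \emph{single} approximant. No sums or maxima, and hence no constants from (Q1)~c), ever enter. This Egorov-type selection is the ingredient your proposal gestures at but does not supply.
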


We omit the proof of Theorem~\ref{fat_fatou} as it can be found in~\cite[Chapter 15, §66, Theorem 2]{Integration}. 

\begin{thm}\label{fat_quasinorm}
    Let ${\norm{\cdot}_X}\colon \M_+(R,\mu)\to [0,\infty]$ be a functional that satisfies (Q1), (P2), (P4). Then $\norm{\cdot}_{\fat(X)}$ is a quasi-Banach function norm that satisfies $\norm{f}_{\fat(X)}\leq\norm{f}_X$, for every $f\in \M_+(R,\mu).$
\end{thm}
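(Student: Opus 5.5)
The inequality $\norm{f}_{\fat(X)}\leq\norm{f}_X$ comes from the constant sequence $f_n:=f$, which is admissible in the infimum. The property (P3) is Theorem~\ref{fat_fatou}. So I only need (Q1), (P2) and (P4) for $\norm{\cdot}_{\fat(X)}$, using that $\norm{\cdot}_X$ satisfies (Q1), (P2), (P4). Throughout I extend the functional to $\M(R,\mu)$ by $\norm{f}_{\fat(X)}:=\norm{\abs{f}}_{\fat(X)}$, which gives the required $\norm{\abs f}=\norm f$.

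\emph{(P4).} This follows from the comparison just proved: $\norm{\chi_E}_{\fat(X)}\le\norm{\chi_E}_X<\infty$.

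\emph{(P2).} Let $0\le f\le g$ and let $g_n\nearrow g$. Then $f_n:=\min\{g_n,f\}\nearrow f$, and $\norm{f_n}_X\le\norm{g_n}_X$ by (P2) of $\norm{\cdot}_X$. Taking the infimum over all such sequences $g_n$ gives $\norm{f}_{\fat(X)}\le\norm{g}_{\fat(X)}$.

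\emph{(Q1)(a).} For $a\neq0$, the map $f_n\mapsto \abs{a} f_n$ is a bijection between sequences increasing to $f$ and sequences increasing to $\abs a f$. It scales the limits by $\abs a$, so homogeneity follows. The case $a=0$ is covered by (b).

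\emph{(Q1)(c).} Given $f,g$ and sequences $f_n\nearrow f$, $g_n\nearrow g$, the sequence $f_n+g_n$ increases to $f+g$. Hence
\[
\norm{f+g}_{\fat(X)}\le\lim_n\norm{f_n+g_n}_X\le C_X\bigl(\lim_n\norm{f_n}_X+\lim_n\norm{g_n}_X\bigr).
\]
Taking infima over both sequences gives the quasi-triangle inequality with the same constant $C_X$.

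\emph{(Q1)(b).} This is the main obstacle. The implication $f=0\Rightarrow\norm{f}_{\fat(X)}=0$ is trivial. Conversely, suppose $f\ge0$ is not a.e.\ zero. Then there exist $\varepsilon>0$ and a set $E$ with $0<\mu(E)<\infty$ such that $f\ge\varepsilon$ on $E$; finiteness of $\mu(E)$ uses $\sigma$-finiteness. By (P2) for $\fat(X)$, it suffices to show $\norm{\varepsilon\chi_E}_{\fat(X)}>0$.

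Let $h_n\nearrow\varepsilon\chi_E$. The sets $F_n:=\{h_n>\varepsilon/2\}\cap E$ increase to $E$. By (P2),
\[
\norm{h_n}_X\ge\tfrac{\varepsilon}{2}\norm{\chi_{F_n}}_X .
\]
So it suffices to show that $\liminf_n\norm{\chi_{F_n}}_X$ is bounded below by a positive constant depending only on $E$.

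Here I pick $N$ with $\mu(E\setminus F_N)<\mu(E)/2$; this is possible since $\mu(E)<\infty$. This alone does not bound $\norm{\chi_{F_N}}_X$ from below. The general inequality $\norm{\chi_E}_X\le C_X(\norm{\chi_{F_n}}_X+\norm{\chi_{E\setminus F_n}}_X)$ would suffice if $\norm{\chi_{E\setminus F_n}}_X\to0$, but that fails without absolute continuity.

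My plan is to use the \emph{weak} Fatou argument in reverse, as in Theorem~\ref{weak_fatou_char}. Suppose there were sequences $F^{(k)}_n\nearrow E$ with $\lim_n\norm{\chi_{F^{(k)}_n}}_X<2^{-k}C_X^{-k}$. Summing along a diagonal, as in that proof, would produce a single increasing sequence of sets whose norms stay small and whose union is still $E$. I would then try to derive a contradiction with $\norm{\chi_E}_X>0$ via (Q1)(b).

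I expect this step to be the delicate one. If (Q1)(b) cannot be derived from (Q1), (P2), (P4) alone, the fallback is to note that the Lorentz construction is intended for the case where $\norm{\cdot}_X$ also has the weak Fatou property. In that case Theorem~\ref{weak_fatou_char} gives
\[
\norm{\chi_E}_X\le C\lim_n\norm{h_n}_X\cdot\tfrac{1}{\varepsilon},
\]
which yields $\norm{\cdot}_{X}\approx\norm{\cdot}_{\fat(X)}$ and in particular positivity.
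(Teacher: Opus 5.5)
\textbf{Gap: (Q1)(b) is not proved.} Your arguments for (P2), (P4), (Q1)(a) and (Q1)(c) are correct. Your (Q1)(c) is exactly the argument the paper writes out; the paper calls the remaining properties clear. The problem is (Q1)(b). Your reduction to a uniform positive lower bound for $\lim_n\norm{\chi_{F_n}}_X$ over all $F_n\nearrow E$ is the right one, but the proposed way of obtaining it cannot work.

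Diagonal summing produces a single increasing sequence $G_n\nearrow E$ with $\sup_n\norm{\chi_{G_n}}_X$ small. That contradicts nothing. Comparing $\norm{\chi_E}_X$ with $\lim_n\norm{\chi_{G_n}}_X$ is exactly the Fatou-type property that $\norm{\cdot}_X$ is not assumed to have. Moreover, the hypothetical situation already hands you such sequences, so the diagonal step adds nothing. Your fallback assumes the weak Fatou property, which is not a hypothesis of the theorem, so it proves a weaker statement.

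\textbf{The fix: intersect rather than unite.} The aim is to land on a \emph{fixed} set of positive measure and use (P2) and (Q1)(b) of $\norm{\cdot}_X$ there. Suppose that for each $k\in\N$ there are sets $F^{(k)}_n\nearrow E$ (increasing in $n$, up to null sets) with $\lim_n\norm{\chi_{F^{(k)}_n}}_X<1/k$. Since $\mu(E)<\infty$, you can choose $n_k$ with $\mu(E\setminus F^{(k)}_{n_k})<2^{-k-1}\mu(E)$. Put $F:=\bigcap_k F^{(k)}_{n_k}$. Then $\mu(E\setminus F)\le\sum_k 2^{-k-1}\mu(E)=\mu(E)/2$, so $\mu(F)\ge\mu(E)/2>0$. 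On the other hand, $\chi_F\le\chi_{F^{(k)}_{n_k}}$, so by (P2) of $\norm{\cdot}_X$ you get $\norm{\chi_F}_X\le\norm{\chi_{F^{(k)}_{n_k}}}_X<1/k$ for every $k$. Hence $\norm{\chi_F}_X=0$, contradicting (Q1)(b) for $\norm{\cdot}_X$. This yields the uniform lower bound. Combined with your estimate $\norm{h_n}_X\ge\tfrac{\varepsilon}{2}\norm{\chi_{F_n}}_X$, it gives $\norm{\varepsilon\chi_E}_{\fat(X)}>0$. This uses only (Q1), (P2) and $\sigma$-finiteness, and completes your proof without any Fatou-type assumption.
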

\begin{proof}
    We only need to show (Q1) c) because (P3) is given by Theorem~\ref{fat_fatou} and the other properties are clear. Fix $f,g\in \M_+(R,\mu)$. Let $\varepsilon>0$ be arbitrary and find $f_n,g_n\in\M_+(R,\mu)$ such that $f_n\nearrow f$, $g_n\nearrow g$ a.e.,
    \begin{equation*}
        \norm{f_n}_{X} < \norm{f}_{\fat(X)} + \varepsilon,
    \end{equation*}
    and
    \begin{equation*}
        \norm{g_n}_{X} < \norm{g}_{\fat(X)} + \varepsilon.
    \end{equation*}
    So we have $f_n+g_n \nearrow f+g$ a.e.~and
    \begin{equation*}
        \norm{f+g}_{\fat(X)}\leq \norm{f_n+g_n}_X\leq C_X \left( \norm{f_n}_X + \norm{g_n}_X\right) \leq C_X \left( \norm{f}_{\fat(X)} + \norm{g}_{\fat(X)} + 2\varepsilon\right),
    \end{equation*}
    which concludes the proof.
\end{proof}

\begin{thm}\label{fat_equiv}
    Let ${\norm{\cdot}_X}\colon \M_+(R,\mu)\to [0,\infty]$ be a functional that satisfies (Q1), (P2). Then $\norm{\cdot}_X$ has the weak Fatou property if and only if 
    \begin{equation*}
        \norm{\cdot}_X \approx \norm{\cdot}_{\fat(X)},
    \end{equation*}
    on $\M(R,\mu)$.
\end{thm}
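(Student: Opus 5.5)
The plan is to prove the two implications separately. Both rest on the trivial bound $\norm{f}_{\fat(X)}\leq\norm{f}_X$: taking the constant sequence $f_n=f$ in the infimum defining $\norm{\cdot}_{\fat(X)}$ gives it, and it needs only (Q1) and (P2), not (P4). So equivalence of the two functionals amounts to a reverse inequality $\norm{f}_X\leq C\norm{f}_{\fat(X)}$ for all $f\in\M_+(R,\mu)$. Since both functionals depend only on $\abs{f}$, this extends to all of $\M(R,\mu)$.

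\emph{Necessity.} Assume $\norm{\cdot}_X$ has the weak Fatou property. Theorem~\ref{weak_fatou_char} provides a constant $C\geq 1$ such that $\norm{f}_X\leq C\lim_n\norm{f_n}_X$ whenever $f_n\nearrow f$ a.e.\ and the limit is finite.
\begin{itemize}
\item Let $f\in\M_+(R,\mu)$. If $\norm{f}_{\fat(X)}=\infty$, there is nothing to prove.
\item Otherwise, take any admissible sequence $f_n\nearrow f$ with $\lim_n\norm{f_n}_X<\infty$. The limit exists by (P2). Theorem~\ref{weak_fatou_char} then gives $\norm{f}_X\leq C\lim_n\norm{f_n}_X$.
\item Taking the infimum over all such sequences yields $\norm{f}_X\leq C\norm{f}_{\fat(X)}$. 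Sequences whose limit is infinite do not affect the infimum.
\end{itemize}

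\emph{Sufficiency.} Assume $\norm{f}_X\leq C\norm{f}_{\fat(X)}$ for all $f$. Let $f_n\nearrow f$ a.e.\ with $\limsup_n\norm{f_n}_X<\infty$.
\begin{itemize}
\item By definition of the infimum, $\norm{f}_{\fat(X)}\leq\lim_n\norm{f_n}_X<\infty$.
\item Hence $\norm{f}_X\leq C\lim_n\norm{f_n}_X<\infty$, which is exactly the weak Fatou property.
\end{itemize}
Here one could also invoke Theorem~\ref{fat_fatou} to obtain $\norm{f}_{\fat(X)}=\lim_n\norm{f_n}_{\fat(X)}\leq\lim_n\norm{f_n}_X$, but the direct infimum bound already suffices.

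The only point requiring care is the bookkeeping in the definition of $\norm{\cdot}_{\fat(X)}$. The infimum ranges over sequences $f_n\nearrow f$ with possibly infinite limits, so one must check that restricting to sequences with finite limit changes nothing. This is immediate whenever $\norm{f}_{\fat(X)}<\infty$. The real content of the necessity direction is already contained in Theorem~\ref{weak_fatou_char}, so no genuine obstacle remains.
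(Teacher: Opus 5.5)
Your proposal is correct and follows the paper's proof. Necessity comes from the constant of Theorem~\ref{weak_fatou_char} followed by taking the infimum over approximating sequences, and sufficiency from the reverse bound $\norm{f}_X\leq C\norm{f}_{\fat(X)}$. The only differences are small improvements: you obtain $\norm{f}_{\fat(X)}\leq\norm{f}_X$ from the constant sequence, whereas the paper cites Theorem~\ref{fat_quasinorm}, which formally also assumes (P4). You also bound $\norm{f}_{\fat(X)}\leq\lim_n\norm{f_n}_X$ directly from the definition rather than through Theorem~\ref{fat_fatou}.
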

\begin{proof}
    The sufficiency is clear due to Theorem~\ref{fat_fatou}. For necessity, let $f\in\M_+(R,\mu)$ and $C\geq 1$ be the constant from Theorem~\ref{weak_fatou_char}. Then for every $f_n\in\M_+(R,\mu)$ such that $f_n\nearrow f$, we have 
    \begin{equation*}
        \norm{f}_X\leq C \lim_{n\to\infty} \norm{f_n}_X.
    \end{equation*}
    This immediately gives 
    \begin{equation*}
        \norm{f}_X\leq C \norm{f}_{\fat(X)}.
    \end{equation*}
    The other inequality is provided by Theorem~\ref{fat_quasinorm} and so the quasinorms are equivalent.
\end{proof}

To finish this section, we would like to present concrete examples of what the functional $\norm{\cdot}_{\fat(X)}$ may look like. The following theorem will help us with reaching that goal by showing that $\norm{\cdot}_{\fat(X)}$ coincides with the second associate norm if we start with a norm.

\begin{thm}\label{fat_second_dual}
    Let ${\norm{\cdot}_X}\colon \M_+(R,\mu)\to [0,\infty]$ be a functional that satisfies (P1), (P2), (P4). Then $\norm{\cdot}_{X''}=\norm{\cdot}_{\fat(X)}$. 
\end{thm}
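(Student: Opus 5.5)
We prove the two inequalities $\norm{f}_{X''}\leq\norm{f}_{\fat(X)}$ and $\norm{f}_{\fat(X)}\leq\norm{f}_{X''}$ for $f\in\M_+(R,\mu)$.

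For the first inequality, we start by comparing the associate functionals of $\norm{\cdot}_X$ and $\norm{\cdot}_{\fat(X)}$. By Theorem~\ref{fat_quasinorm}, $\norm{\cdot}_{\fat(X)}\leq\norm{\cdot}_X$, so the unit ball of $X$ lies in the unit ball of $\fat(X)$, and hence $\norm{\cdot}_{X'}\leq\norm{\cdot}_{\fat(X)'}$. For the reverse, take $g$ with $\norm{g}_{\fat(X)}<1$ and a sequence $g_n\nearrow g$ with $\lim_n\norm{g_n}_X<1$. By monotone convergence,
\begin{equation*}
\int_R \abs{fg}\,d\mu=\lim_{n\to\infty}\int_R \abs{f g_n}\,d\mu\leq \norm{f}_{X'}\lim_{n\to\infty}\norm{g_n}_X.
\end{equation*}
Scaling then gives $\norm{\cdot}_{X'}=\norm{\cdot}_{\fat(X)'}$, and consequently $X''=\fat(X)''$. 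The functional $\norm{\cdot}_{\fat(X)}$ is a norm: the proof of Theorem~\ref{fat_quasinorm} with $C_X=1$ yields the triangle inequality. It also satisfies (P2), (P3) by Theorem~\ref{fat_fatou}, and (P4) since it is dominated by $\norm{\cdot}_X$. By H\"older's inequality (Theorem~\ref{holder}) applied to $\fat(X)$, we get $\norm{f}_{\fat(X)''}\leq\norm{f}_{\fat(X)}$, which proves the first inequality.

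For the second inequality, suppose $\norm{f}_{X''}<\infty$. Choose sets $R_n\nearrow R$ of finite measure and put $f_n:=\min\{f,n\}\chi_{R_n}\nearrow f$. We claim that $\norm{h}_X\leq\norm{h}_{X''}$ for every bounded $h$ supported on a set of finite measure; taking $h=f_n$ and passing to the limit then gives $\norm{f}_{\fat(X)}\leq\lim_n\norm{f_n}_X\leq\norm{f}_{X''}$. To prove the claim, we use Hahn--Banach, following the Lorentz--Luxemburg argument in \cite[Chapter 1, Theorem 2.7]{BennetSharpley}, on the space $L^\infty(E)$ for a set $E$ of finite measure. The seminorm $p(u):=\norm{u\chi_E}_X$ is finite on $L^\infty(E)$ by (P4) and (P2). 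Given $h$ supported in $E$ with $\norm{h}_X>\alpha$, we separate $h$ from the convex set $\{u:p(u)\leq\alpha\}$ by a functional $\Lambda$ on $L^\infty(E)$ with $\abs{\Lambda}\leq p$.

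The main obstacle is that $\Lambda$ need not be given by integration against a function $g\in L^1$, because we do not have (P5) or any Fatou property on $X$. The plan is as follows. We work with real-valued $h\geq0$ and replace $\Lambda$ by a positive functional $\abs{\Lambda}(u)=\sup_{\abs{v}\leq u}\abs{\Lambda(v)}$, which is still dominated by $p$ thanks to the lattice property. We then take the countably additive (normal) part of $\abs{\Lambda}$ via the Yosida--Hewitt decomposition, which gives a density $g\geq0$ with $\int ug\,d\mu\leq\norm{u}_X$ for all $u$, that is, $\norm{g}_{X'}\leq1$. The hard step is showing that this normal part still nearly attains $\norm{h}_X$. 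The natural route is to apply the separation not to $\norm{\cdot}_X$ but to its order-continuous regularization on $L^\infty(E)$. That regularization agrees with $\norm{\cdot}_{\fat(X)}$ on bounded functions, up to the Fatou closure, and this is exactly what the statement asserts: it is $\norm{\cdot}_{\fat(X)}$, not $\norm{\cdot}_X$, that coincides with $\norm{\cdot}_{X''}$. So the claim should be weakened accordingly. Concretely, we separate $h$ from the closed convex set $\{u:\norm{u}_{\fat(X)}\leq\alpha\}$ in the $\sigma(L^\infty,L^1)$ topology. This set is weak-* closed because $\norm{\cdot}_{\fat(X)}$ has the Fatou property, via a Krein--Smulian/Fatou-type argument on the bounded set $\{\abs{u}\leq n\chi_E\}$. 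The separation then yields the functional $g\in L^1(E)$ directly, and we conclude $\norm{h}_{\fat(X)}\leq\norm{h}_{X''}$.
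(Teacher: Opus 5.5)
The paper gives no proof of this statement; it only cites Zaanen. Your argument is correct once the abandoned Yosida--Hewitt paragraph is dropped, and it follows the natural two-step skeleton. First you show $\norm{\cdot}_{X'}=\norm{\cdot}_{\fat(X)'}$; your monotone-convergence argument for this is fine. This gives $\norm{f}_{X''}=\norm{f}_{\fat(X)''}\le\norm{f}_{\fat(X)}$. (The upper bound can also be had in one line: for $\norm{g}_{X'}\le1$ and $f_n\nearrow f$, $\int fg=\lim\int f_ng\le\lim\norm{f_n}_X$.) Second, you prove a Lorentz--Luxemburg-type lower bound for the Fatou norm $\norm{\cdot}_{\fat(X)}$.

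The genuine difficulty is that (P5) is not assumed. So the proof of \cite[Chapter 1, Theorem 2.7]{BennetSharpley}, which separates inside $L^1(E)$ because the unit ball lies there, does not apply verbatim. You handle this by separating in $\sigma(L^\infty(E),L^1(E))$. That works, but your closedness justification is too thin. The Fatou property controls only a.e.-convergent sequences, not weak$^*$-convergent nets, so the argument needs this chain:
\begin{itemize}
\item The set $K_n=\{u:\abs{u}\le n\chi_E,\ \norm{u}_{\fat(X)}\le\alpha\}$ is convex and norm-closed in $L^1(E)$: pass to an a.e.\ convergent subsequence and use (P2) and (P3) of $\fat(X)$.
\item Hence $K_n$ is weakly closed in $L^1(E)$ by Mazur.
\item Hence $K_n$ is weak$^*$-closed in $L^\infty(E)$, because a weak$^*$-convergent net in $L^\infty(E)$ converges weakly in $L^1(E)$ when $\mu(E)<\infty$.
\item Only then does Krein--\v{S}mulian make the whole $\fat(X)$-ball weak$^*$-closed.
\end{itemize}
After separation you also need solidity of the ball to replace $\mathrm{Re}\int ug$ by $\int\abs{u}\abs{g}$. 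You then need monotone convergence to pass from bounded $u$ to all $u$ supported in $E$. Finally, your corrected claim is $\norm{h}_{\fat(X)}\le\norm{h}_{X''}$, not $\norm{h}_X\le\norm{h}_{X''}$. So the last step must use (P3) of $\fat(X)$, namely $\norm{f}_{\fat(X)}=\lim_n\norm{f_n}_{\fat(X)}\le\norm{f}_{X''}$, rather than the definition of $\fat(X)$.

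There is a lighter route that stays closer to Bennett--Sharpley. Separate $h$ in the norm topology of $L^1(E)$ from $K=\{u\in L^1(E):\norm{u}_{\fat(X)}\le\alpha\}$. Intersecting with $L^1(E)$ replaces (P5), and $K$ is convex, solid and norm-closed by the Fatou property. Hahn--Banach then hands you $g\in L^\infty(E)$ directly. Put $\beta=\sup_{u\in K}\int\abs{u}\abs{g}$; then $\alpha\abs{g}/\beta$ lies in the unit ball of $\fat(X)'$, hence of $X'$, and its integral against $\abs{h}$ exceeds $\alpha$. This avoids weak$^*$ topologies and Krein--\v{S}mulian altogether. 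Your version gains nothing beyond producing an $L^1$ density instead of an $L^\infty$ one, which is irrelevant here.
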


The proof of Theorem~\ref{fat_second_dual} can be found, once again, in~\cite[Chapter 15, §71, Theorem 2]{Integration}. 

\begin{example}
    Consider the following highly nonstandard norm
    \begin{equation*}
        \norm{x_n}_{c_0}:=\norm{x_n}_{\ell^\infty} + \infty \cdot\limsup_{n\to\infty} \abs{x_n},
    \end{equation*}
    where we interpret $\infty\cdot 0=0$. This norm then generates the classical sequence space
    \begin{equation*}
        c_0:=\{(x_n)_{n=1}^\infty \subset \Co : \lim_{n\to \infty} x_n=0\}.
    \end{equation*}
    Note that we use this construction so that the space is defined by a norm and fits within our context. Then this functional does not have the weak Fatou property. It is well-known that the associate space of $c_0$ is the space of summable sequences $\ell^1$. It is also classical that the associate space to $\ell^1$ is the space of bounded sequences $\ell^\infty$. Hence, using Theorem~\ref{fat_second_dual}, we obtain $\norm{\cdot}_{\fat(c_0)}=\norm{\cdot}_{\ell^\infty}$. \par
    For another example, consider the sequence space given by the norm
    \begin{equation*}
        \norm{x_n}_X:=\norm{x_n}_{\ell^\infty} + \limsup_{n\to\infty} \abs{x_n}.
    \end{equation*}
    This norm is clearly equivalent to $\norm{\cdot}_{\ell^\infty}$ so it has the weak Fatou property, but it does not have the strong Fatou property. A counterexample can be easily constructed. By the same calculation as above, we obtain $\norm{\cdot}_{\fat(X)}=\norm{\cdot}_{\ell^\infty}$.
\end{example}

\section{Wiener--Luxemburg amalgam spaces}\label{amalgam_sec}

\subsection{Definition and basic properties}

This section deals with the new approach to Wiener--Luxemburg amalgam spaces. First, we recall the original definition. Throughout this section, we restrict ourselves to only nonatomic measure spaces with infinite measure, unless stated otherwise. In particular, the maximal fundamental function of every space is finite at every point due to Theorem~\ref{lenka4.4}. Such a restriction is completely natural in our context because we need an infinite measure space to speak about global behavior, and nonatomicity allows for sets of any measure. 

\begin{defn}\label{original_amalgam}
    Let $\norm{\cdot}_A$, $\norm{\cdot}_B$ be r.i.~quasi-Banach function norms and $A$, $B$ their corresponding r.i.~quasi-Banach function spaces. We define the \textsl{Wiener--Luxemburg amalgam norm} as 
    \begin{equation*}
        \norm{f}_{\WL(A,B)}:=\norm{f^*\chi_{[0,1]}}_{\overline{A}} + \norm{f^*\chi_{[1,\infty)}}_{\overline{B}},
    \end{equation*}
    for any $f\in\M(R,\mu)$.
\end{defn}

The idea of this space is to split the function and study its local and global behavior separately. Observe that to extract local behavior, we restrict its rearrangement to the interval $[0,1]$, and for the global behavior, we remove the interval $[0,1]$. So, locally, we want to find a set where the function is the largest, and globally, we want to remove this set. Here, the word \say{local} pertains to \say{on sets of finite measure}. In particular, these sets may still escape to infinity. Conversely, the word \say{global} refers to \say{outside of sets of finite measure}. We attempt to capture this idea in the following definition, which allows for general non-r.i.~quasi-Banach function norms. It will be useful for us to consider slightly more general operators. It is interesting to note that this definition also works for arbitrary measure spaces.

\begin{defn}\label{maximal_minimal_norm}
    Let $\norm{\cdot}_X$ be a quasi-Banach function norm and let $X$ be the corresponding quasi-Banach function space. We define the \textsl{maximal local norm} of $f\in X$
    \begin{equation*}
        \sigma_Xf(t):=\sup_{\substack{E\subset R \\ \mu(E)\leq t}} \norm{f \chi_{E}}_X, \ t>0,
    \end{equation*}
    and the \textsl{minimal global norm} of $f\in X$
    \begin{equation*}
        \delta_Xf(t):=\inf_{\substack{E\subset R \\ \mu(E)\leq t}} \norm{f \chi_{R\setminus E}}_X, \ t>0.
    \end{equation*}
    We also denote the evaluation at $1$ as
    \begin{equation*}
        \norm{f}_{X_{\loc}}:=\sigma_Xf(1),
    \end{equation*}
    and
    \begin{equation*}
        \norm{f}_{X_{\glob}}:=\delta_Xf(1).
    \end{equation*}
\end{defn}

\begin{rem}
    In the case of the space $L^1$, our notation coincides with the classical notation $L^1_{\loc}$ for the set of locally integrable functions. However, the condition in our definition is much stronger. Because we never need to work with the set of locally integrable functions in this text, the symbol $\norm{\cdot}_{L^1_{\loc}}$ will always denote the functional from Definition~\ref{maximal_minimal_norm}. 
\end{rem}

\begin{rem}
    In nonatomic measure spaces, it can be easily shown that
    \begin{equation*}
        \sigma_Xf(t)=\sup_{\substack{E\subset R \\\mu(E)= t}} \norm{f \chi_{E}}_X , \ t>0,
    \end{equation*}
    and 
    \begin{equation*}
        \delta_Xf(t)=\inf_{\substack{E\subset R \\\mu(E)= t}} \norm{f \chi_{R\setminus E}}_X, \ t>0.
    \end{equation*}
\end{rem}

Now, we can define the new Wiener--Luxemburg functional. Note that, as we will see later on, this functional is a quasinorm under reasonable assumptions on the local space. 

\begin{defn}\label{new_amalgam}
    Let $\norm{\cdot}_A$, $\norm{\cdot}_B$ be quasi-Banach function norms and $A$, $B$ their corresponding quasi-Banach function spaces. We define the \textsl{Wiener--Luxemburg amalgam norm} as
    \begin{equation*}
        \norm{f}_{\GWL(A,B)} := \norm{f}_{A_{\loc}} + \norm{f}_{B_{\glob}},
    \end{equation*}
    for any $f\in \M(R,\mu)$.
\end{defn}

Here, we use the notation $\GWL(A,B)$ to distinguish our new definition from the original definition using rearrangements. However, we still call it the Wiener--Luxemburg amalgam space because we will show later that this is in fact a direct extension. \par
An intuitive observation is that an amalgam of a space with itself coincides with the original space.

\begin{rem}\label{equivalent_quasinorm}
    It holds that $\norm{\cdot}_A\approx\norm{\cdot}_{\GWL(A,A)}$ for an arbitrary quasi-Banach function norm $\norm{\cdot}_A$. Indeed, for any $f\in\M(R,\mu)$, we have
    \begin{equation*}
        \norm{f}_{\GWL(A,A)} = \norm{f}_{A_{\loc}} + \norm{f}_{A_{\glob}}\leq 2 \norm{f}_A.
    \end{equation*}
    For the other inequality, let $f\in\M(R,\mu)$, $\varepsilon>0$ and let $E\subset R$, $\mu(E)\leq 1$ be such that
    \begin{equation*}
        \norm{f\chi_{R\setminus E}}_A < \norm{f}_{A_{\glob}} + \varepsilon.
    \end{equation*}
    Then
    \begin{equation*}
        \norm{f}_A \leq C_A\left(\norm{f\chi_E}_A + \norm{f\chi_{R\setminus E}}_A\right) < C_A\left(\norm{f}_{A_{\loc}}+\norm{f}_{A_{\glob}} + \varepsilon\right),
    \end{equation*}
    which shows the result.
\end{rem}

Now, we would like to study some properties of the defined operators and functionals. Namely, we prove two useful inequalities, and we show that both defined operators are finite at all points if they are finite in at least one point.

\begin{prop}\label{local_estimate}
    Let $\norm{\cdot}_X$ be a quasi-Banach function norm and let $X$ be the corresponding quasi-Banach function space. Then, for all $f\in \M(R,\mu)$, 
    \begin{equation*}
        \sigma_Xf(t_1) \leq \sigma_Xf(t_2) \leq \left(\sum_{n=1}^{\lceil t_2/t_1\rceil} C_X^{n}\right) \sigma_Xf(t_1),
    \end{equation*}
    for any $0<t_1<t_2$.
\end{prop}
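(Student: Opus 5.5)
The first inequality is immediate from the definition: every set $E$ with $\mu(E)\leq t_1$ also satisfies $\mu(E)\leq t_2$. The supremum defining $\sigma_Xf(t_2)$ is therefore taken over a larger family than the one defining $\sigma_Xf(t_1)$, so $\sigma_Xf(t_1)\leq\sigma_Xf(t_2)$. No nonatomicity is needed here.

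For the second inequality, put $N:=\lceil t_2/t_1\rceil$ and fix an arbitrary $E\subset R$ with $\mu(E)\leq t_2$. The plan is to split $E$ into $N$ disjoint measurable pieces $E_1,\dots,E_N$, each of measure at most $t_1$. Since the section assumes $(R,\mu)$ is nonatomic, a Sierpi\'nski-type argument applies: the map $s\mapsto\mu(E\cap F_s)$ over an increasing family of measurable sets $F_s$ takes all intermediate values. Using this, we can peel off successive subsets of $E$ of measure exactly $t_1$ until the remainder has measure at most $t_1$. This takes at most $N$ steps because $Nt_1\geq t_2\geq\mu(E)$; if fewer pieces arise, we pad with empty sets. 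Then $f\chi_E=\sum_{n=1}^N f\chi_{E_n}$, and Proposition~\ref{nekvindas_trick} gives
\begin{equation*}
\norm{f\chi_E}_X\leq\sum_{n=1}^N C_X^n\norm{f\chi_{E_n}}_X\leq\left(\sum_{n=1}^N C_X^n\right)\sigma_Xf(t_1).
\end{equation*}
If $\sigma_Xf(t_1)=\infty$ the inequality is trivial, so we may assume it is finite. Taking the supremum over all such $E$ yields the claim.

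The only real point to check is the decomposition step, since the proposition is stated for a general measure space while the section's standing convention is nonatomic. I will rely on that convention and invoke the classical intermediate-value property of nonatomic measures, which needs $\sigma$-finiteness; $E$ has finite measure, so this holds.

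If one wanted to avoid nonatomicity, the decomposition could fail: an atom of measure larger than $t_1$ cannot be split. In that case the statement would need separate handling, but within the section's standing assumption this issue does not arise. The proof will note explicitly that nonatomicity is used precisely at this step.
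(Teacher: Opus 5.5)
Your proposal is correct and follows the paper's proof. The first inequality is monotonicity. The second comes from splitting a set of measure at most $t_2$, via Sierpi\'nski's theorem on the nonatomic space, into $\lceil t_2/t_1\rceil$ disjoint pieces of measure at most $t_1$, and then applying Proposition~\ref{nekvindas_trick}. The only difference is cosmetic: you take the supremum over all such $E$ directly, while the paper fixes an $\varepsilon$-almost-maximizing set.
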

\begin{proof}
    The first inequality can be easily shown by noticing that $\sigma_Xf$ is an increasing function. Let $\varepsilon>0$, $0<t_1<t_2$, and $f\in\M(R,\mu)$. From the definition of $\sigma_X$, we find a set $E\subset R$, $\mu(E)=t_2$ such that
    \begin{equation*}
        \sigma_Xf(t_2) < \norm{f \chi_{E}}_X + \varepsilon.
    \end{equation*}
    By the classical Sierpi\'nski theorem, we find $\lceil t_2/t_1\rceil$ pairwise disjoint sets $E_n \subset R$, $\mu(E_n)\leq t_1$ such that $E=\bigcup_n E_n$. Using Proposition~\ref{nekvindas_trick}, we calculate
    \begin{equation*}
        \sigma_Xf(t_2) < \norm{f \chi_{E}}_X + \varepsilon = \norm{\sum_{n=1}^{\lceil t_2/t_1\rceil} f\chi_{E_n}}_X + \varepsilon \leq \sum_{n=1}^{\lceil t_2/t_1\rceil} C_X^{n} \norm{f\chi_{E_n}}_X + \varepsilon
        \leq \sigma_Xf(t_1) \sum_{n=1}^{\lceil t_2/t_1\rceil} C_X^{n} + \varepsilon,
    \end{equation*}
    which proves the inequality.
\end{proof}

We would like to get a similar result for the minimal global norm. However, this kind of inequality is impossible to obtain in this case. A simple counterexample can be constructed by taking characteristic functions of two disjoint sets of a given measure. The following proposition solves this problem by leveraging an additional assumption on the local space. We note that this sort of problem will be a common theme throughout this text. Usually, in our proofs the local part will behave quite nicely, but working with the global part is going to be more complicated.

\begin{prop}\label{global_properties}
    Let $\norm{\cdot}_A$, $\norm{\cdot}_B$ be quasi-Banach function norms and $A$, $B$ their corresponding quasi-Banach function spaces, and let $\norm{\cdot}_A$ be admissible. Then, for all $f\in \M(R,\mu)$, the following statements are true:
    \begin{enumerate}[label=(\roman*), itemsep=0.5em]
        \item $\delta_Af(t)$ is either finite for all $t>0$ or infinite for all $t>0$,
        \item for all $0<t_1<t_2$, it holds
        \begin{equation*}
       \delta_Bf \left(t_2\right) \leq \delta_Bf \left(t_1\right) \leq C_B  \max\left\{1,\frac {\varphi^{\max}_B(t_2-t_1)}{\varphi^{\min}_A(t_1)}\right\} \left( \sigma_Af(t_2)+\delta_Bf(t_2)\right),
    \end{equation*}
    \end{enumerate}
    In particular,
    \begin{equation*}
        \delta_Bf \left(\frac{1}{2}\right) \leq C_B \max\left\{1,\frac{\varphi^{\max}_B(1/2)}{\varphi^{\min}_A(1/2)}\right\} \norm{f}_{\GWL(A,B)}.
    \end{equation*}
\end{prop}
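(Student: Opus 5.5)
The plan is to prove both parts with one construction: a near-optimal set of measure $t_2$ is split into its ``top'' part of measure $t_1$ and a remainder on which $|f|$ is uniformly bounded.

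\emph{The easy inequality and the construction.} The inequality $\delta_Bf(t_2)\le\delta_Bf(t_1)$ for $t_1<t_2$ is immediate, since the infimum defining $\delta_Bf(t_2)$ runs over a larger family of sets. For the second inequality in (ii), we may assume the right-hand side is finite. Fix $\varepsilon>0$ and, using the preceding Remark (nonatomicity), pick $E\subset R$ with $\mu(E)=t_2$ and $\norm{f\chi_{R\setminus E}}_B<\delta_Bf(t_2)+\varepsilon$. Put $s:=(f\chi_E)^*(t_1)$. Then $\mu(\{|f|>s\}\cap E)\le t_1\le\mu(\{|f|\ge s\}\cap E)$. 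By nonatomicity (Sierpi\'nski), add a suitable part of $\{|f|=s\}\cap E$ to $\{|f|>s\}\cap E$ to obtain $E_1\subset E$ with $\mu(E_1)=t_1$, $|f|\ge s$ on $E_1$ and $|f|\le s$ on $E_2:=E\setminus E_1$, where $\mu(E_2)=t_2-t_1$.

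\emph{Bounding $s$ and finishing (ii).} The number $s$ is finite: otherwise $|f|=\infty$ on a subset of $E$ of measure greater than $t_1$, and then $\norm{f\chi_{E}}_A=\infty$ by (Q1) and (P2), contradicting $\sigma_Af(t_2)<\infty$. Admissibility and the lattice property give
\[
s\,\varphi_A^{\min}(t_1)\le\norm{s\chi_{E_1}}_A\le\norm{f\chi_{E_1}}_A\le\sigma_Af(t_2).
\]
Similarly, (P2) gives $\norm{f\chi_{E_2}}_B\le s\norm{\chi_{E_2}}_B\le s\,\varphi_B^{\max}(t_2-t_1)$. Since $f\chi_{R\setminus E_1}=f\chi_{R\setminus E}+f\chi_{E_2}$, the quasi-triangle inequality yields
\[
\delta_Bf(t_1)\le C_B\left(\delta_Bf(t_2)+\varepsilon+\frac{\varphi_B^{\max}(t_2-t_1)}{\varphi_A^{\min}(t_1)}\,\sigma_Af(t_2)\right),
\]
which is bounded by the claimed expression with the factor $\max\{1,\cdot\}$. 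Letting $\varepsilon\to0$ proves (ii). The displayed ``in particular'' estimate follows by taking $t_1=1/2$ and $t_2=1$, since $\sigma_Af(1)+\delta_Bf(1)=\norm{f}_{\GWL(A,B)}$.

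\emph{Part (i).} Since $\delta_Af$ is nonincreasing, finiteness at some $t_2$ passes to all larger $t$. For smaller $t_1<t_2$ we cannot simply quote (ii) with $B=A$, because $\sigma_Af(t_2)$ may be infinite. Instead, repeat the construction with $\norm{\cdot}_A$ in place of $\norm{\cdot}_B$. If $s<\infty$, the bound $\norm{f\chi_{E_2}}_A\le s\,\varphi_A^{\max}(t_2-t_1)$ is finite by Theorem~\ref{lenka4.4}; this step needs no admissibility. We then get $\delta_Af(t_1)\le C_A(\norm{f\chi_{R\setminus E}}_A+\norm{f\chi_{E_2}}_A)<\infty$.

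\emph{Main obstacle.} The delicate case is $s=\infty$, i.e.\ $F:=\{|f|=\infty\}$ has $\mu(F\cap E)>t_1$. Here I would argue that finiteness of $\norm{f\chi_{R\setminus E}}_A$ forces $\mu(F\setminus E)=0$. Then for any $t_1<\mu(F)$, every admissible set misses a positive-measure part of $F$, so $\delta_Af(t_1)=\infty$. Thus the dichotomy can fail for such $f$ unless we restrict to $f\in\M_0(R,\mu)$. I expect to handle this by the paper's standing convention of working with a.e.\ finite functions, which is where I anticipate the only real subtlety.
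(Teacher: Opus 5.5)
Your proposal is correct and follows the paper's proof. The paper also takes a near-optimal set $G$ of measure $t_2$ and splits off a top part $\tilde G$ of measure $t_1$; it uses Ryff's theorem for this, where you use an equivalent level-set/Sierpi\'nski construction. It then bounds $(f|_G)^*(t_1)\norm{\chi_{G\setminus\tilde G}}_B$ by $\frac{\varphi_B^{\max}(t_2-t_1)}{\varphi_A^{\min}(t_1)}\sigma_Af(t_2)$ exactly as you do.

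Your caveat about (i) is also right. For example, $f=\infty\cdot\chi_F$ with $\mu(F)=1$ gives $\delta_Af(t)=\infty$ for $t<1$ and $\delta_Af(t)=0$ for $t\ge1$. The paper's unjustified claim that $(f|_G)^*(t_1)$ is finite tacitly relies on its standing convention that functions are finite a.e., which is the same restriction to $\M_0(R,\mu)$ you propose.
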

\begin{proof}
    Let $f\in \M(R,\mu)$, $\varepsilon>0$, and $0<t_1<t_2$. Using the definition of $\delta_B$, we find $G\subset R$, $\mu(G)=t_2$ such that 
    \begin{equation*}
        \norm{f\chi_{R\setminus G}}_B < \delta_Bf(t_2) + \varepsilon.
    \end{equation*}
    By Theorem~\ref{ryff}, there exists a measure preserving transformation $\sigma\colon G \rightarrow (0,t_2)$ such that $(\abs{f})|_G=(f|_G)^* \circ \sigma$.
    We denote $\tilde{G}:=\sigma^{-1}((0,t_1])$. Then $\tilde{G}\subset G$, $\mu(\tilde{G})= t_1$, $\abs{f}\geq (f|_G)^*(t_1)$ in $\tilde{G}$ and
 $\abs{f}\leq (f|_G)^*(t_1)$ in $G\setminus \tilde{G}$. 
    We get
    \begin{equation}\label{inf_estimate}
        \begin{aligned}
            \delta_Bf  \left(t_1\right)
            &\leq  \norm{f\chi_{R\setminus \tilde{G}}}_B \\
            &\leq C_B\left(\norm{f\chi_{R\setminus G}}_B + \norm{f\chi_{G\setminus \tilde{G}}}_B\right) \\
            &\leq C_B\left(\delta_Bf(t_2) + \varepsilon + (f|_G)^*\left(t_1\right)\norm{\chi_{G\setminus \tilde{G}}}_B\right).
        \end{aligned}
    \end{equation}
    Now, we realize that the first inequality in the second statement is simply the monotonicity of the minimal global norm, and we notice that the terms other than the minimal global norm on the last line of~\eqref{inf_estimate} are all finite because
    \begin{equation*}
        \norm{\chi_{G\setminus \tilde{G}}}_B \leq \varphi_B^{\max}(t_2)<\infty.
    \end{equation*}
    Together, these facts yield the first statement. For the second inequality in the second statement, we use the admissibility of $\norm{\cdot}_A$ and the fact that $\mu(G\setminus \tilde{G})=t_2-t_1$ to continue the estimate~\eqref{inf_estimate}
    \begin{equation*}
        \begin{aligned}
            \delta_Bf \left(t_1\right)
            &\leq C_B\left(\delta_Bf(t_2) + \varepsilon + (f|_G)^*\left(t_1\right)\frac{\varphi^{\max}_B(t_2-t_1)}{\varphi^{\min}_A(t_1)}\norm{\chi_{\tilde{G}}}_A\right) \\
            &\leq C_B\left(\delta_Bf(t_2) + \varepsilon + \frac{\varphi^{\max}_B(t_2-t_1)}{\varphi^{\min}_A(t_1)}\norm{f\chi_{\tilde{G}}}_A\right) \\
            &\leq C_B\left(\delta_Bf(t_2) + \varepsilon + \frac{\varphi^{\max}_B(t_2-t_1)}{\varphi^{\min}_A(t_1)}\sigma_Af(t_2)\right).
        \end{aligned}
    \end{equation*}
    This proves the inequality.
\end{proof}

An immediate corollary of these two results is the fact that if we choose sets of any different measure in the definition of our amalgam functional, we obtain the same space. More precisely, the quasinorms are always equivalent. This will be used later in the study of continuous embeddings.

\begin{prop}\label{gwl_norm_equivalence}
    Let $\norm{\cdot}_A$, $\norm{\cdot}_B$ be quasi-Banach function norms and $A$, $B$ their corresponding quasi-Banach function spaces, and let $\norm{\cdot}_A$ be admissible. Then
    \begin{equation*}
        \sigma_A(\cdot)(t_1)+\delta_B(\cdot)(t_2)\approx \sigma_A(\cdot)(t_3)+\delta_B(\cdot)(t_4),
    \end{equation*}
    for all $t_1,t_2,t_3,t_4>0$.
\end{prop}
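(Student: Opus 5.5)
It suffices to show that for arbitrary $t_1,t_2>0$ the functional $\sigma_A(\cdot)(t_1)+\delta_B(\cdot)(t_2)$ is equivalent to the reference functional $\sigma_A(\cdot)(t)+\delta_B(\cdot)(t)$ with $t:=\max\{t_1,t_2\}$. Equivalence is transitive, so the claim for $(t_1,t_2)$ versus $(t_3,t_4)$ then follows by passing through the reference points $\max\{t_1,t_2\}$ and $\max\{t_3,t_4\}$. It also suffices to compare any two reference functionals $\sigma_A(\cdot)(s)+\delta_B(\cdot)(s)$ and $\sigma_A(\cdot)(s')+\delta_B(\cdot)(s')$.

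\emph{Comparing two reference functionals.} Let $0<s<s'$.
\begin{itemize}
    \item Since $\sigma_A f$ is nondecreasing and $\delta_B f$ is nonincreasing, Proposition~\ref{local_estimate} gives $\sigma_Af(s')\le K\sigma_Af(s)$ with $K=\sum_{n=1}^{\lceil s'/s\rceil}C_A^n$.
    \item Monotonicity of $\delta_B$ gives $\delta_Bf(s')\le\delta_Bf(s)$.
    \item Together these yield $\sigma_Af(s')+\delta_Bf(s')\le K(\sigma_Af(s)+\delta_Bf(s))$.
\end{itemize}
For the reverse inequality:
\begin{itemize}
    \item We have $\sigma_Af(s)\le\sigma_Af(s')$.
    \item Proposition~\ref{global_properties}(ii), with $t_1=s$, $t_2=s'$, gives $\delta_Bf(s)\le C_B\max\{1,\varphi_B^{\max}(s'-s)/\varphi_A^{\min}(s)\}(\sigma_Af(s')+\delta_Bf(s'))$.
    \item This constant is finite because $\varphi_B^{\max}$ is finite by Theorem~\ref{lenka4.4} (nonatomic setting). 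It is well defined because $\varphi_A^{\min}(s)>0$ by admissibility of $\norm{\cdot}_A$.
\end{itemize}

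\emph{Comparing mixed points with the reference.} Now take general $(t_1,t_2)$ and set $t=\max\{t_1,t_2\}$. Then
\begin{equation*}
\sigma_Af(t_1)\le\sigma_Af(t)\le K_1\sigma_Af(t_1),
\end{equation*}
where the second inequality is Proposition~\ref{local_estimate} (trivial if $t_1=t$). For the global part:
\begin{itemize}
    \item Monotonicity gives $\delta_Bf(t)\le\delta_Bf(t_2)$.
    \item Proposition~\ref{global_properties}(ii), applied to $t_2\le t$ (trivial if equal), gives $\delta_Bf(t_2)\le C(\sigma_Af(t)+\delta_Bf(t))$.
\end{itemize}
Combining these:
\begin{itemize}
    \item $\sigma_Af(t_1)+\delta_Bf(t_2)\le(1+C)(\sigma_Af(t)+\delta_Bf(t))$.
    \item Conversely, $\sigma_Af(t)+\delta_Bf(t)\le K_1\sigma_Af(t_1)+\delta_Bf(t_2)\le K_1(\sigma_Af(t_1)+\delta_Bf(t_2))$.
\end{itemize}

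The only delicate point is the direction that bounds $\delta_B$ at a smaller parameter by the value at a larger one. This cannot be done with $\delta_B$ alone; it needs the local $A$-term, which is exactly why admissibility is assumed. This is already supplied by Proposition~\ref{global_properties}, so the remaining work is bookkeeping of constants. The values $+\infty$ cause no trouble, since each inequality holds in $[0,\infty]$.
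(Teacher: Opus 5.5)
Your proposal is correct and is essentially the paper's argument. The paper simply states that the equivalence follows immediately from Proposition~\ref{local_estimate} and Proposition~\ref{global_properties}. You have written out the constant bookkeeping that the paper leaves implicit: passing through the reference point $\max\{t_1,t_2\}$, and handling the equal-parameter cases that Proposition~\ref{global_properties} (stated for $t_1<t_2$) does not cover.
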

\begin{proof}
    The statement follows immediately from Proposition~\ref{local_estimate} and Proposition~\ref{global_properties}.
\end{proof}

Now, we can finally investigate whether our functional is a quasi-Banach function norm. As we shall see in the following lemma, the strong Fatou property is very easy to obtain for the local part.

\begin{lemma}\label{individual_fatou}
    Let $\norm{\cdot}_X$ be a quasi-Banach function norm and let $X$ be the corresponding quasi-Banach function space. Then $\norm{\cdot}_{X_{\loc}}$ has the strong Fatou property.
\end{lemma}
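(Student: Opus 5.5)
The plan is to exploit the fact that $\norm{\cdot}_{X_{\loc}}=\sigma_X(\cdot)(1)$ is a supremum of the functionals $f\mapsto\norm{f\chi_E}_X$ over $\mu(E)\leq 1$. Each of these functionals individually has the strong Fatou property, and suprema of increasing families commute with monotone limits.

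Fix $f_n,f\in\M_+(R,\mu)$ with $f_n\nearrow f$ a.e. First observe that (P2) of $\norm{\cdot}_X$ gives $\norm{f_n\chi_E}_X\leq\norm{f_{n+1}\chi_E}_X\leq\norm{f\chi_E}_X$ for every $E$. Taking suprema over $E\subset R$ with $\mu(E)\leq 1$, the sequence $\norm{f_n}_{X_{\loc}}$ is nondecreasing and bounded above by $\norm{f}_{X_{\loc}}$. Hence $\lim_n\norm{f_n}_{X_{\loc}}\leq\norm{f}_{X_{\loc}}$.

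For the reverse inequality, fix any $E$ with $\mu(E)\leq 1$. Since $f_n\chi_E\nearrow f\chi_E$ a.e., the property (P3) of $\norm{\cdot}_X$ yields
\begin{equation*}
\norm{f\chi_E}_X=\lim_{n\to\infty}\norm{f_n\chi_E}_X\leq\lim_{n\to\infty}\norm{f_n}_{X_{\loc}}.
\end{equation*}
Taking the supremum over all such $E$ gives $\norm{f}_{X_{\loc}}\leq\lim_n\norm{f_n}_{X_{\loc}}$, so the two quantities are equal. This argument works equally well when the values are infinite.

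There is no real obstacle here. The only point to check is that the interchange $\sup_E\lim_n=\lim_n\sup_E$ is legitimate, and it is, because both operations are monotone suprema. This is exactly what fails for $\norm{\cdot}_{X_{\glob}}$, where an infimum appears, which explains why the lemma is stated only for the local part.
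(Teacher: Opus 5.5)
Your proof is correct and takes essentially the same route as the paper. The paper interchanges the supremum over $n$ with the supremum over sets $E$ with $\mu(E)\leq 1$, using (P3) of $\norm{\cdot}_X$, and gets monotonicity from (P2); your two-inequality argument is exactly that interchange written out, and your remark about why the global part fails matches the paper's comment after the lemma.
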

\begin{proof}
    Let $f_n,f\in \M_+(R,\mu)$ be  such that $f_n\nearrow f$ a.e. Using the property (P3) of $\norm{\cdot}_X$ and interchanging suprema, we obtain
    \begin{equation*}
        \lim_{n\to \infty} \norm{f_n}_{X_{\loc}}=\sup_{n\in \N} \sup_{\substack{E\subset R \\ \mu(E)\leq 1}} \norm{f_n\chi_E}_{X}=\sup_{\substack{E\subset R \\ \mu(E)\leq 1}} \sup_{n\in \N} \norm{f_n\chi_E}_{X} = \sup_{\substack{E\subset R \\ \mu(E)\leq 1}} \norm{f\chi_E}_{X} = \norm{f}_{X_{\loc}}.
    \end{equation*}
    From the property (P2) of $\norm{\cdot}_X$, it immediately follows that the convergence is monotone.
\end{proof}

Unfortunately, we are not able to show the strong Fatou property for the global norm. Notice that unlike in Lemma~\ref{individual_fatou}, the same calculation would result in an attempt to interchange a supremum and an infimum, which is rarely possible. So we resort to only proving the weak Fatou property and all other quasi-Banach function norm properties.

\begin{thm}\label{gwl_quasinorm}
    Let $\norm{\cdot}_A$, $\norm{\cdot}_B$ be quasi-Banach function norms and $A$, $B$ their corresponding quasi-Banach function spaces, and let $\norm{\cdot}_A$ be admissible. Then $\norm{\cdot}_{\GWL(A,B)}$ has the properties (Q1), (P2), (P4), and it has the weak Fatou property. Moreover, if $\norm{\cdot}_A$ has the property (P5), then $\norm{\cdot}_{\GWL(A,B)}$ has the property (P5).
\end{thm}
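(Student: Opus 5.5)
I will verify the axioms one at a time, keeping the local and global parts separate. The easy parts are mostly immediate from the definitions. The identity $\norm{f}=\norm{\abs{f}}$ and homogeneity (Q1 a) are clear. The lattice property (P2) holds because both $\norm{f\chi_E}_A$ and $\norm{f\chi_{R\setminus E}}_B$ are monotone in $f$ for each fixed $E$, and suprema and infima preserve this. For (P4), take a set $F$ with $\mu(F)<\infty$. Then $\norm{\chi_F}_{A_{\loc}}\leq\norm{\chi_F}_A<\infty$, and $\norm{\chi_F}_{B_{\glob}}\leq\norm{\chi_F}_B<\infty$ by choosing $E=\emptyset$ in the infimum. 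For (Q1 b), assume $\norm{f}_{\GWL(A,B)}=0$. Then $\norm{f\chi_E}_A=0$ for every $E$ with $\mu(E)\leq1$, so $f=0$ a.e.\ on every such $E$. Since $R$ is $\sigma$-finite and nonatomic, it is a countable union of sets of measure at most $1$, and hence $f=0$ a.e.

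For the quasi-triangle inequality (Q1 c), the local part is direct: $\norm{(f+g)\chi_E}_A\leq C_A(\norm{f\chi_E}_A+\norm{g\chi_E}_A)$, and taking the supremum gives $\norm{f+g}_{A_{\loc}}\leq C_A(\norm{f}_{A_{\loc}}+\norm{g}_{A_{\loc}})$. For the global part, choose almost-optimal sets $E_f,E_g$ of measure at most $1/2$. Then $\mu(E_f\cup E_g)\leq 1$, so
\[
\norm{f+g}_{B_{\glob}}\leq \norm{(f+g)\chi_{R\setminus(E_f\cup E_g)}}_B\leq C_B\bigl(\delta_Bf(1/2)+\delta_Bg(1/2)+2\varepsilon\bigr).
\]
The bound in Proposition~\ref{global_properties} controls $\delta_B(\cdot)(1/2)$ by a constant multiple of $\norm{\cdot}_{\GWL(A,B)}$. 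This is the point where the admissibility of $\norm{\cdot}_A$ is used. Combining the two parts gives (Q1 c) with modulus $\max\{C_A, C_B K\}$, where $K$ is the constant from Proposition~\ref{global_properties}.

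For (P5), take $F$ with $\mu(F)<\infty$ and split it into finitely many sets $F_1,\dots,F_m$ of measure at most $1$. Property (P5) for $A$ then gives
\[
\int_F f\,d\mu\leq\sum_{i=1}^m C_{F_i}\norm{f\chi_{F_i}}_A\leq\Bigl(\sum_{i=1}^m C_{F_i}\Bigr)\norm{f}_{A_{\loc}}.
\]

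The main obstacle is the weak Fatou property, because the global part does not commute with the supremum over $n$. Let $f_n\nearrow f$ with $\sup_n\norm{f_n}_{\GWL(A,B)}=M<\infty$. Lemma~\ref{individual_fatou} handles the local part: $\norm{f}_{A_{\loc}}\leq M$. For the global part, I pick sets $E_n$ with $\mu(E_n)\leq1$ and $\norm{f_n\chi_{R\setminus E_n}}_B\leq M+1$. I need a single set that works in the limit. The plan is to avoid extracting a convergent subsequence of sets and instead take $G_n:=\bigcup_{k\geq n}E_k$; the difficulty is that this union may have infinite measure.

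I would therefore use the local bound instead. By admissibility, $f$ is bounded off a set of small measure, in the sense that the sets $\{f>\lambda\}$ have controlled measure. Concretely, I would replace $E_n$ by $\tilde E_n:=E_n\cap\{f_n>\lambda\}\cup\{f_n>\lambda\}$-type level sets. In practice I would instead apply the monotone part of Proposition~\ref{global_properties} to work with measure-$1/2$ sets, and then exploit that $f_n\chi_{R\setminus E}$ increases in $n$ for a \emph{fixed} $E$. The cleanest route is a compactness argument: $\chi_{E_n}$ is bounded in $L^2$ on each piece of a $\sigma$-finite exhaustion, so a diagonal subsequence converges weakly to some $h$ with $0\leq h\leq1$ and $\int h\leq1$. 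Set $E:=\{h\geq 1/2\}$, so that $\mu(E)\leq 2$. Then for fixed $m$ and $n\geq m$, I estimate $\norm{f_m\chi_{R\setminus E}}_B$ through Fatou for $B$ applied to $\liminf$-type combinations such as $f_m\chi_{R\setminus E}\leq 2f_m(1-h)$, with convex combinations of $\chi_{R\setminus E_n}$ converging a.e.\ by Mazur's lemma. Convex combinations of $m$ terms cost a factor governed by Proposition~\ref{nekvindas_trick}. To avoid this cost I would first pass to an equivalent $p$-norm via Aoki--Rolewicz, for which $\norm{\sum x_k}^p\leq\sum\norm{x_k}^p$. Finally, (P3) for $B$ as $m\to\infty$ gives $\delta_Bf(2)<\infty$, and Proposition~\ref{gwl_norm_equivalence} converts this into finiteness of $\norm{f}_{B_{\glob}}$.
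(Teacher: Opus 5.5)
Your verification of (Q1)\,a), b), (P2), (P4), (P5), and of the quasi-triangle inequality (two almost-optimal sets of measure $1/2$ plus Proposition~\ref{global_properties}) is exactly the paper's argument. One minor slip: the constant you obtain is $C_A+C_BK$, not $\max\{C_A,C_BK\}$, because the global estimate is in terms of the full amalgam quasinorms.

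The genuine gap is in the weak Fatou part. Your route needs a bound on $\norm{\sum_n\lambda_{k,n}f_n\chi_{R\setminus E_n}}_B$ that is uniform over the convex weights produced by Mazur's lemma. Passing to an Aoki--Rolewicz $p$-norm $\norm{\cdot}_*$ does not give one. It only yields $\norm{\sum_n\lambda_nx_n}_*^p\leq\sum_n\lambda_n^p\norm{x_n}_*^p$, and $\sum_n\lambda_n^p=N^{1-p}$ for $N$ equal weights. In a non-locally-convex space the convex hull of a bounded set need not be bounded, and this happens in precisely your configuration. Take $B=L^p$ with $p<1$ and, e.g., $A=L^q$ with $q<p$. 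Let $v(x)=x^{-1/p}\chi_{(0,1)}(x)$, $I_n=(2^{-n-1},2^{-n})$, and $E_n=((0,1)\setminus I_n)\cup J_n$, where $\mu(J_n)=\mu(I_n)$ and $J_n$ escapes to infinity. For the constant sequence $f_n=v$ one has $\norm{v\chi_{R\setminus E_n}}_{L^p}^p=\log 2$ for every $n$, and the averages of the $\chi_{E_n}$ are legitimate Mazur combinations. Yet $\norm{N^{-1}\sum_{n=1}^N v\chi_{R\setminus E_n}}_{L^p}^p=N^{1-p}\log 2\to\infty$. So your estimate can degenerate to $\infty$, and the argument only goes through when $\norm{\cdot}_B$ is (equivalent to) a norm.

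The idea you mention and then drop is the right one, provided the level set is taken for the limit $f$ rather than for $f_n$.
By Lemma~\ref{individual_fatou}, $\norm{f}_{A_{\loc}}\leq M$. Hence for $\lambda_0>M/\varphi_A^{\min}(1)$ the set $F:=\{f>\lambda_0\}$ satisfies $\mu(F)<1$, since a subset $G\subset F$ with $\mu(G)=1$ would give $\norm{f\chi_G}_A\geq\lambda_0\varphi_A^{\min}(1)>M$. Because $f_n\leq f\leq\lambda_0$ off $F$, you get $f_n\chi_{R\setminus F}\leq f_n\chi_{R\setminus E_n}+\lambda_0\chi_{E_n}$. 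Therefore $\norm{f_n\chi_{R\setminus F}}_B\leq C_B\bigl(M+1+\lambda_0\varphi_B^{\max}(1)\bigr)$ for every $n$, and (P3) for $\norm{\cdot}_B$ gives $\norm{f}_{B_{\glob}}\leq\norm{f\chi_{R\setminus F}}_B<\infty$. The essential point is that $F$ does not depend on $n$; no compactness is needed.

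The paper's own proof lacks this feature. It shrinks $E_n$ via Ryff's theorem to $\tilde E_n$ of measure $2^{-n}$, so its final bound contains $(f_n|_{E_n})^*(2^{-n})\varphi_B^{\max}(1)$, which is not uniform in $n$. Its choice of $n$ with $\norm{f\chi_{R\setminus E}}_B<\norm{f_n\chi_{R\setminus E}}_B+1$ also presupposes that the left-hand side is finite, and that can fail. Take $A=L^{1/2}$ and $B=L^1$ on $(0,\infty)$, $f(x)=x^{-3/2}\chi_{(0,1)}(x)$, $f_n=f\chi_{(1/(n+2),1)}$, and the legitimate choice $E_n=(0,1)$. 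Then one is forced to $\tilde E_n=\bigl(\frac{1}{n+2},\frac{1}{n+2}+2^{-n}\bigr]$. So $E=\bigcup_n\tilde E_n$ misses $\bigl(\frac{1}{k+3}+2^{-k-1},\frac{1}{k+2}\bigr)$ for every $k$, and $\norm{f\chi_{R\setminus E}}_{L^1}$ diverges like $\sum_k k^{-1/2}$. The fixed level set $F$ closes that argument as well.
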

\begin{proof}
    Let $f,g\in \M_+(R,\mu)$. Properties (Q1) a), b), (P2), and (P4) are easy consequences of the properties of $\norm{\cdot}_A$ and $\norm{\cdot}_B$. \par
    First, we show (Q1) c). 
    The local estimate 
    \begin{equation*}
        \norm{f+g}_{A_{\loc}} \leq C_A \left( \norm{f}_{A_{\loc}} + \norm{g}_{A_{\loc}}\right),
    \end{equation*}
    is easily obtained from the subadditivity of the supremum. For the global estimate, 
    let $\varepsilon>0$. From the definition of $\delta_B$, we find sets $E_1$ and $E_2$ such that $\mu(E_1)=\mu(E_2)=1/2$,
    \begin{equation*}
        \norm{f\chi_{R\setminus E_1}}_B < \delta_Bf\left(\frac{1}{2}\right) + \varepsilon,
    \end{equation*}
    and
    \begin{equation*}
        \norm{g\chi_{R\setminus E_2}}_B < \delta_B g\left(\frac{1}{2}\right) + \varepsilon.
    \end{equation*}
    We put $E_0:=E_1\cup E_2$. Then $\mu(E_0)\leq 1$ and using Proposition~\ref{global_properties}, we obtain
    \begin{equation*}
        \begin{aligned}
            \norm{f+g}_{B_{\glob}}&\leq \norm{(f+g)\chi_{R\setminus E_0}}_B \\
            &\leq C_B \left(\norm{f\chi_{R\setminus E_1}}_B+\norm{g\chi_{R\setminus E_2}}_B\right) \\
            &< 2C_B \varepsilon + C_B\left( \delta_Bf\left(\frac{1}{2}\right) + \delta_Bg\left(\frac{1}{2}\right)\right) \\
            &\leq 2C_B \varepsilon + (C_B)^2 \max\left\{1,\frac{\varphi^{\max}_B(1/2)}{\varphi^{\min}_A(1/2)}\right\} \left( \norm{f}_{\GWL(A,B)} + \norm{g}_{\GWL(A,B)}\right).
        \end{aligned}
    \end{equation*}\par
    Now, we show that $\norm{\cdot}_{\GWL(A,B)}$ has the weak Fatou property. Let $f_n,f\in \M_+(R,\mu)$ be such that $f_n\nearrow f$, and 
    \begin{equation*}
        \lim_{n\to \infty} \norm{f_n}_{\GWL(A,B)} < \infty.
    \end{equation*}
    By the definition of $\norm{\cdot}_{B_{\glob}}$, for every $n\in\N$, we find a set $E_n\subset R$, $\mu(E_n)=1$ such that 
    \begin{equation*}
        \norm{f_n \chi_{R\setminus E_n}}_{B} < \norm{f_n}_{B_{\glob}} + 1.
    \end{equation*}
    Notice that $\norm{f_n}_{B_{\glob}}\leq\lim_n \norm{f_n}_{\GWL(A,B)}< \infty$ due to the monotonicity. For every $n\in \N$, we use Proposition~\ref{ryff} to find a measure preserving transformation $\sigma_n\colon E_n \rightarrow (0,1)$ such that $f_n|_{E_n}=(f_n|_{E_n})^*\circ \sigma_n$. We denote $\tilde{E}_n:=\sigma_n^{-1}((0,2^{-n}])$. Then $\mu(\tilde{E}_n)=2^{-n}$, and $f_n\leq (f_n|_{E_n})^*(2^{-n})$ on $E_n\setminus \tilde{E}_n$. Let
    \begin{equation*}
        E:=\bigcup_{n=1}^\infty \tilde{E}_n.
    \end{equation*}
    Then 
    \begin{equation*}
        \mu(E)\leq \sum_{n=1}^\infty 2^{-n}=1.
    \end{equation*}
    Now, using the property (P3) of $\norm{\cdot}_B$, we find $n\in \N$ large enough so that
    \begin{equation*}
        \norm{f\chi_{R\setminus E}}_B <\norm{f_n\chi_{R\setminus E}}_B + 1.
    \end{equation*}
    To finish, we estimate
    \begin{equation*}
        \begin{aligned}
            \norm{f}_{B_{\glob}} &\leq \norm{f\chi_{R\setminus E}}_B 
            < \norm{f_n\chi_{R\setminus E}}_B +1 \\
            &\leq C_B\left(\norm{f_n\chi_{R\setminus E_n}}_B+\norm{f_n \chi_{E_n \setminus E}}_B\right) + 1 \\
            &\leq C_B\left(\norm{f_n\chi_{R\setminus E_n}}_B+\norm{f_n \chi_{E_n \setminus \tilde{E}_n}}_B\right) + 1 \\
            &\leq C_B\left(\norm{f_n\chi_{R\setminus E_n}}_B+ (f_n|_{E_n})^*(2^{-n}) \norm{\chi_{E_n \setminus \tilde{E}_n}}_B\right) + 1 \\
            &\leq C_B\left(\norm{f_n\chi_{R\setminus E_n}}_B+ (f_n|_{E_n})^*(2^{-n}) \varphi_B^{\max}(1)\right) + 1 < \infty.
         \end{aligned}
    \end{equation*}
    This and the fact that $\norm{f}_{A_{\loc}}<\infty$, due to Lemma~\ref{individual_fatou}, gives that $\norm{f}_{\GWL(A,B)}<\infty$.
    \par
    Finally, suppose that $\norm{\cdot}_A$ has the property (P5) and 
    let $E\subset R$ be such that $\mu(E)<\infty$. Denote $n:=\ceil{\mu(E)}$. Using the classical Sierpi\'nski theorem, 
    we find $n$ sets $E_k$ such that $\mu(E_k)\leq1$ for every $k\in\N ,k\leq n$, and $\bigcup_k E_k=E$. 
    Then using the property (P5) of $\norm{\cdot}_A$, we get 
    \begin{equation*}
        \begin{aligned}
            \int_E f \, d\mu&= \sum_{k=1}^n \int_{E_k} f \, d\mu \\
            &\leq \sum_{k=1}^n C_{E_k} \norm{f\chi_{E_k}}_A \\ 
            &\leq n \max_{k\leq n} C_{E_k} \norm{f}_{A_{\loc}} \\
         &\leq n \max_{k\leq n} C_{E_k} \norm{f}_{\GWL(A,B)}, 
        \end{aligned}
    \end{equation*}
    where $C_{E_k}$ denotes the constant from the property (P5) for the set $E_k$. This shows that $\norm{\cdot}_{\GWL(A,B)}$ has the property (P5).
\end{proof}

Here, the tools we discussed in the previous section come into play. We can use them to prove that our space $\GWL(A,B)$ is, in fact, a quasi-Banach function space. Establishing this is a crucial fact; without it, our construction would be of little use. Furthermore, this classification allows us to leverage many of the results already available for these spaces.

\begin{cor}\label{gwl_quasinormability}
     Let $\norm{\cdot}_A$, $\norm{\cdot}_B$ be quasi-Banach function norms and $A$, $B$ their corresponding quasi-Banach function spaces, and let $\norm{\cdot}_A$ be admissible. Then $\GWL(A,B)$ is a quasi-Banach function space endowed with the quasi-Banach function norm $\norm{\cdot}_{\fat(\GWL(A,B))}$, and 
     \begin{equation*}
         \norm{\cdot}_{\fat(\GWL(A,B))}\approx\norm{\cdot}_{\GWL(A,B)}.
     \end{equation*}
\end{cor}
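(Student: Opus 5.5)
The plan is to combine Theorem~\ref{gwl_quasinorm} with the machinery of Section~\ref{weak_fatou_sec}. By Theorem~\ref{gwl_quasinorm}, the functional $\norm{\cdot}_{\GWL(A,B)}$ satisfies (Q1), (P2) and (P4), and it has the weak Fatou property; here we use that $\norm{\cdot}_A$ is admissible. First, Theorem~\ref{fat_quasinorm} applies directly and shows that $\norm{\cdot}_{\fat(\GWL(A,B))}$ is a quasi-Banach function norm. Next, Theorem~\ref{fat_equiv} gives $\norm{\cdot}_{\GWL(A,B)}\approx\norm{\cdot}_{\fat(\GWL(A,B))}$ on $\M(R,\mu)$. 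More precisely, Theorem~\ref{fat_quasinorm} supplies $\norm{f}_{\fat(\GWL(A,B))}\leq\norm{f}_{\GWL(A,B)}$, and Theorem~\ref{weak_fatou_char} supplies the reverse bound $\norm{f}_{\GWL(A,B)}\leq C\norm{f}_{\fat(\GWL(A,B))}$.

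The remaining point is that the two functionals define the same set of functions. This follows from the equivalence: $\norm{f}_{\GWL(A,B)}<\infty$ holds if and only if $\norm{f}_{\fat(\GWL(A,B))}<\infty$. Hence the collection $\GWL(A,B)$ coincides with the quasi-Banach function space generated by $\norm{\cdot}_{\fat(\GWL(A,B))}$, and it carries an equivalent quasinorm.

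Two minor bookkeeping issues need attention. The functionals in Section~\ref{weak_fatou_sec} are defined on $\M_+(R,\mu)$, so we extend $\norm{\cdot}_{\fat(\GWL(A,B))}$ to $\M(R,\mu)$ by $\norm{f}:=\norm{\abs{f}}$, which is consistent with $\norm{f}_{\GWL(A,B)}=\norm{\abs{f}}_{\GWL(A,B)}$. We also check that the modulus of concavity of $\norm{\cdot}_{\GWL(A,B)}$ carries over, which the proof of Theorem~\ref{fat_quasinorm} handles. No step is a genuine obstacle, since all the substantive work is already contained in Theorem~\ref{gwl_quasinorm}. The only place needing care is confirming that the hypotheses of Theorems~\ref{fat_quasinorm} and~\ref{fat_equiv} are exactly those provided, namely (Q1), (P2), (P4) and the weak Fatou property.
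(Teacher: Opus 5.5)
Your proposal is correct and matches the paper's proof, which consists of the single line that the corollary follows from Theorems~\ref{fat_quasinorm}, \ref{fat_equiv} and~\ref{gwl_quasinorm}: (Q1), (P2), (P4) and the weak Fatou property of $\norm{\cdot}_{\GWL(A,B)}$ give both that $\norm{\cdot}_{\fat(\GWL(A,B))}$ is a quasi-Banach function norm and that it is equivalent to $\norm{\cdot}_{\GWL(A,B)}$. Your extra bookkeeping, namely extending to $\M(R,\mu)$ via $\abs{f}$ and noting that the two functionals have the same finiteness set, is harmless and only spells out what the paper leaves implicit.
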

\begin{proof}
    This follows immediately from Theorems~\ref{fat_quasinorm}, ~ \ref{fat_equiv},~and ~ \ref{gwl_quasinorm}.
\end{proof}

\subsection{Consistency}
The next question that we investigate is whether our new definition is consistent with the original definition of Wiener--Luxemburg amalgams. That is, if we go back to considering r.i.~spaces, we would like the spaces defined in Definition~\ref{original_amalgam} and~\ref{new_amalgam} to coincide. We will again work with the local and global parts separately. In the case of the local part, the answer turns out to be fairly straightforward, as shown in the following proposition. More general versions of these results are presented, which allow for a resonant measure space. However, they prove to be relevant outside the scope of this paper. We refer the reader to the related article~\cite{pesa2026amalgamapproachcompactnessquasibanach}, where these results are also featured and used. Complete proofs are still included, as these results are still quite new and very relevant to our context and methods.

\begin{prop}\label{local_ri_equiv}
    Let $(R, \mu)$ be resonant, let $\norm{\cdot}_X$ be an r.i.~quasi-Banach function norm, and let $X$ be the corresponding r.i.~quasi-Banach function space. Let $t\geq0$ be in the range of $\mu$. 
    Then, for all $f\in \M_0(R,\mu)$, we have
    \begin{equation*}
        \sigma_X f(t)=\norm{f^*\chi_{[0,t)}}_{\overline{X}}.
    \end{equation*}
\end{prop}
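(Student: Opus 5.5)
We prove the two inequalities $\sigma_X f(t)\le \norm{f^*\chi_{[0,t)}}_{\overline{X}}$ and $\sigma_X f(t)\ge \norm{f^*\chi_{[0,t)}}_{\overline{X}}$ separately. The case $t=0$ is trivial, since both sides vanish, so we assume $t>0$. Throughout we use the representation identity $\norm{g}_X=\norm{g^*}_{\overline{X}}$ and the lattice property (P2) of $\norm{\cdot}_{\overline{X}}$.

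\emph{Upper bound.} Let $E\subset R$ with $\mu(E)\le t$. The function $f\chi_E$ vanishes outside a set of measure at most $t$, so $(f\chi_E)_*(s)\le \min\{f_*(s),t\}$. Hence $(f\chi_E)^*\le f^*$ everywhere and $(f\chi_E)^*=0$ on $[t,\infty)$, that is, $(f\chi_E)^*\le f^*\chi_{[0,t)}$. By (P2) of $\norm{\cdot}_{\overline{X}}$,
\begin{equation*}
\norm{f\chi_E}_X=\norm{(f\chi_E)^*}_{\overline{X}}\le\norm{f^*\chi_{[0,t)}}_{\overline{X}}.
\end{equation*}
Taking the supremum over all such $E$ gives the upper bound.

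\emph{Lower bound.} We look for sets $E$ with $\mu(E)\le t$ such that $(f\chi_E)^*$ equals, or increases to, $f^*\chi_{[0,t)}$. Set $s_0:=f^*(t)$, $F:=\{\abs{f}>s_0\}$ and $G:=\{\abs{f}\ge s_0\}$. Then $\mu(F)=f_*(s_0)\le t\le \mu(G)$. Since $f\in\M_0$, the set $F$ is well behaved, but $\mu(G)$ may be infinite. We distinguish the two cases allowed by resonance.

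In the \textbf{nonatomic case}, pick $E$ with $F\subset E\subset G$ and $\mu(E)=t$. If $\mu(G)=\infty$ we use $\sigma$-finiteness: intersect $G\setminus F$ with an exhausting sequence to get a subset of finite measure at least $t-\mu(F)$, then use the Sierpiński theorem to cut out a piece of the exact measure needed. A direct computation of distribution functions then gives $(f\chi_E)_*(s)=\min\{f_*(s),t\}$ for all $s\ge0$. Indeed, for $s\ge s_0$ both sides equal $f_*(s)$, and for $s<s_0$ both equal $t$. Consequently $(f\chi_E)^*=f^*\chi_{[0,t)}$ a.e., and equality follows from the representation identity.

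One subtlety is that $f^*(t)$ could fail to be finite. This is excluded: if $f^*(t)=\infty$, then $\mu(\{\abs{f}=\infty\})\ge t>0$, contradicting $f\in\M_0$. We can also dispose of $f^*(t)=0$ easily, since then $F=\supp f$ and we may take $E\supset F$.

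In the \textbf{completely atomic case} with atoms of equal measure $a$, the hypothesis that $t$ lies in the range of $\mu$ gives $t=ka$ for some integer $k$ (or $t=\infty$, which we handle by exhaustion and the Fatou property (P3)). On each atom $\abs{f}$ is constant, and $f^*$ is the step function listing these values in decreasing order on intervals of length $a$. If the supremum of the values is attained, let $E$ be the union of the $k$ atoms carrying the $k$ largest values, with ties broken arbitrarily; then again $(f\chi_E)^*=f^*\chi_{[0,t)}$.

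\emph{Main obstacle.} The delicate point is the atomic case when the values of $\abs{f}$ accumulate, so the suprema are not attained: say infinitely many atoms carry values approaching $f^*(ja)$ from below. Here I would choose $k$ atoms $E_n$ whose values approximate the first $k$ values of $f^*$ from below, arranged so that $(f\chi_{E_n})^*\nearrow f^*\chi_{[0,t)}$ pointwise a.e., for instance by choosing values within $1/n$ and taking monotone maxima. Then (P3) of $\norm{\cdot}_{\overline{X}}$ gives $\sup_n\norm{f\chi_{E_n}}_X\ge\norm{f^*\chi_{[0,t)}}_{\overline{X}}$. The same Fatou-based limiting argument also covers $t=\infty$ in the nonatomic case.
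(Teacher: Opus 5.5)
Your upper bound is correct and matches the paper's, but the nonatomic lower bound has a genuine gap. The chain $\mu(F)=f_*(s_0)\le t\le\mu(G)$ is false in general. So is your claim that $f^*(t)=\infty$ forces $\mu(\{\abs{f}=\infty\})\ge t$.

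Both claims silently use continuity from above of $f_*$ at $s_0$. That needs $f_*(s)<\infty$ for some $s<s_0$. For $f\in\M_0$ on an infinite measure space this fails exactly when $f^*$ is constant and positive on $[t,\infty)$, i.e.\ $f^*(t)=\lim_{u\to\infty}f^*(u)>0$, and then $\mu(G)$ can be smaller than $t$. Two examples on $(0,\infty)$ with Lebesgue measure:
\begin{itemize}
\item $f(x)=x/(1+x)$ has $f_*(s)=\infty$ for $s<1$. Hence $f^*\equiv1$, $s_0=1$ and $F=G=\emptyset$.
\item $f(x)=x$ lies in $\M_0$ but has $f^*\equiv\infty$.
\end{itemize}

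In the first example no set works at all. If $\mu(E)\le t$, then $\mu(E\cap\{\abs{f}>s\})\to0$ as $s\nearrow1$, because $\abs{f}<1$ everywhere and $\mu(E)<\infty$. Hence $(f\chi_E)^*(u)<1=f^*(u)$ for every $u\in(0,t)$. So the exact-extremal-set strategy cannot be rescued there, and a limiting argument is needed in the nonatomic case too. Your ``main obstacle'' paragraph anticipates this non-attainment only for atoms and for $t=\infty$.

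The cleanest repair is the paper's structure.
\begin{enumerate}
\item First take $f\ge0$ with $\mu(\supp f)<\infty$; the case $\mu(\supp f)\le t$ is trivial.
\item Then $f_*$ is finite on $(0,\infty)$, so $s_0<\infty$. If $s_0>0$, continuity from above now legitimately gives $\mu(G)=\lim_{s\nearrow s_0}f_*(s)\ge t$.
\item Your level-set construction then works verbatim. It is a nice elementary substitute for the Ryff transformation the paper uses. In the atomic case, finite support likewise makes all the relevant values attained.
\item For general $f$, put $f_n=f\chi_{R_n}$ with $R_n\nearrow R$ of finite measure. Then $\sigma_Xf_n(t)\le\sigma_Xf(t)$ and $f_n^*\chi_{[0,t)}\nearrow f^*\chi_{[0,t)}$, so (P3) of $\norm{\cdot}_{\overline{X}}$ finishes the proof.
\end{enumerate}

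Alternatively, you can keep your direct approach and handle the bad case separately.
\begin{itemize}
\item If $\mu(G)<t$ and $s_0<\infty$, then $\{\abs{f}>s\}$ has infinite measure for every $s<s_0$. Choose $E_s\supset G$ with $E_s\setminus G\subset\{s<\abs{f}<s_0\}$ and $\mu(E_s)=t$. This gives $(f\chi_{E_s})^*\ge(s/s_0)f^*\chi_{[0,t)}$; conclude by homogeneity as $s\nearrow s_0$.
\item If $s_0=\infty$, use sets of measure $t$ inside $\{\abs{f}>n\}$.
\end{itemize}
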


\begin{proof}
    Let $f\in \M_0(R,\mu)$. For the first inequality, let $\varepsilon>0$ and by the definition of $\sigma_X f$ find a set $G\subset \mathcal{R}$, $\mu(G)=t$ such that 
    \begin{equation*}
        \sigma_X f(t) < \norm{f\chi_G}_X + \varepsilon.
    \end{equation*}
    Then using the properties of the nonincreasing rearrangement and the fact that $f\chi_G$ and $(f\chi_G)^*$ are equimeasurable, with the latter function being supported inside $[0, t)$, we obtain
    \begin{equation*}
        \sigma_X f(t) < \norm{f\chi_G}_X + \varepsilon = \norm{(f\chi_G)^* \chi_{[0,t)}}_{\overline{X}} + \varepsilon \leq \norm{f^* \chi_{[0,t)}}_{\overline{X}} + \varepsilon,
    \end{equation*}
    which gives
    \begin{equation*}
        \sigma_X f(t)\leq\norm{f^*\chi_{[0,t)}}_{\overline{X}}.
    \end{equation*}
    
    For the second inequality, we start by only considering $f\in \M_0(\mathcal{R} ,\mu)$ nonnegative and with support of finite measure. We denote $E:=\supp f$. Note that the statement is trivial if $\mu(E)\leq t$, so we may assume that $\mu(E)>t$. We now need to distinguish two cases:
    \begin{itemize}[leftmargin=5mm]
        \item If the measure space $(R,\mu)$ is nonatomic, then by Theorem~\ref{ryff} there exists a measure preserving transformation $\sigma\colon E \rightarrow (0,\mu(E))$ such that $(f)|_E=(f|_E)^*\circ \sigma$ a.e. Next, we put $H:=\sigma^{-1}((0,t))$. Then $\mu(H)=t$~and $((f|_E)^*\chi_{[0,t)}) \circ \sigma = (f)|_E \chi_H$ a.e. Hence, $f^*\chi_{[0,t)}$ and $f \chi_H$ are equimeasurable.
        \item If $(R,\mu)$ is completely atomic with atoms of measure equal to $\alpha$, then $f^*$ is a right-continuous decreasing step function with jumps occurring only at points $k\alpha$, $k\in\N$. For every $k\in\N$ such that $k\alpha\leq t$ we find a distinct $x_k\in R$ satisfying $f(x_k)=f^*(k\alpha)$; this is possible because we assume that $\mu(E) < \infty$. Let $H\subset R$ be the collection of all $x_k$ such that $k\alpha\leq t$. From this construction, it is clear that the functions $f^*\chi_{[0,t)}$ and $f\chi_H$ are equimeasurable.
    \end{itemize}
    In both cases, this yields
    \begin{equation*}
        \norm{f^*\chi_{[0,t)}}_{\overline{X}} = \norm{f \chi_H}_X \leq \sigma_X f(t).
    \end{equation*}
    For a general nonnegative $f\in\M(R,\mu)$, we let $f_n:=f\chi_{R_n}$ for every $n\in\N$, where $R_n\nearrow R$ is the sequence from the $\sigma$-finiteness of $(R,\mu)$. The previously proven case gives
    \begin{equation*}
        \sigma_X f_n(t)=\norm{f^*_n\chi_{[0,t)}}_{\overline{X}},
    \end{equation*}
    for every $n\in\N$. 
    
    We may now take the limit on both sides. Indeed, on the right-hand side, the convergence follows by using the properties of the nonincreasing rearrangement and the property (P3) of $\norm{\cdot}_{\overline{X}}$, while on the left-hand side one only has to interchange suprema and apply the property (P3) of $\norm{\cdot}_{X}$. Finally, the extension from nonnegative functions to general ones is trivial.
\end{proof}

The answer is not as simple for the global part, as we were only able to obtain the following result with a substantially more involved proof.

\begin{prop}\label{global_norm_rearrangement}
   Let $(R, \mu)$ be resonant, let $\norm{\cdot}_X$ be an r.i.~quasi-Banach function norm, and let $X$ be the corresponding r.i.~quasi-Banach function space. Let $t\geq0$ be in the range of $\mu$. Then there exist constants $C_1,C_2>0$ such that it holds for all $f\in \M_0(R,\mu)$ that
    \begin{equation*}
       C_1 \delta_Xf(t) \leq \norm{f^*\chi_{[t,\infty)}}_{\overline{X}} \leq  C_2 \delta_Xf\left(\frac{t}{2}\right).
    \end{equation*}
\end{prop}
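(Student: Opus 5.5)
The plan is to compare $\delta_X f$ with $\norm{f^*\chi_{[t,\infty)}}_{\overline{X}}$ using two elementary facts about rearrangements of restrictions:
\begin{equation*}
    (f\chi_{R\setminus E})^*(s)\geq f^*(s+\mu(E)) \quad\text{for every } E\subset R,\ s\geq 0,
\end{equation*}
and, for a suitably chosen ``level set'' $H$ with $\mu(H)=t$, the reverse inequality $(f\chi_{R\setminus H})^*(s)\leq f^*(s+t)$. Throughout we may assume $f\geq 0$, and we use (P6) in the form $\norm{g}_X=\norm{g^*}_{\overline{X}}$.

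\emph{First inequality.} We choose $H\subset R$ with $\mu(H)=t$ so that $\{f>f^*(t)\}\subset H\subset\{f\geq f^*(t)\}$. In the nonatomic case this is possible because the measure space is nonatomic. In the completely atomic case, where $t$ is a multiple of the atom size $\alpha$, we take $H$ to consist of $t/\alpha$ atoms carrying the largest values of $f$, exactly as in the proof of Proposition~\ref{local_ri_equiv}. With this choice, $(f\chi_{R\setminus H})^*(s)\leq f^*(s+t)$ for all $s\geq 0$. Hence
\begin{equation*}
    \delta_Xf(t)\leq \norm{f\chi_{R\setminus H}}_X\leq \norm{f^*(\cdot+t)}_{\overline{X}}.
\end{equation*}
Next we split $f^*(\cdot+t)=f^*(\cdot+t)\chi_{[0,t)}+f^*(\cdot+t)\chi_{[t,\infty)}$. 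The first piece is a translate of $f^*\chi_{[t,2t)}$, so it is equimeasurable with it, and its $\overline{X}$-norm is at most $\norm{f^*\chi_{[t,\infty)}}_{\overline{X}}$. The second piece is pointwise dominated by $f^*\chi_{[t,\infty)}$ because $f^*$ is nonincreasing. The quasi-triangle inequality then gives $\delta_Xf(t)\leq 2C_X\norm{f^*\chi_{[t,\infty)}}_{\overline{X}}$, so we may take $C_1=(2C_X)^{-1}$.

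\emph{Second inequality.} We fix any $E$ with $\mu(E)\leq t/2$. By the first fact and monotonicity of $f^*$, $g(s):=f^*(s+t/2)\leq (f\chi_{R\setminus E})^*(s)$, and therefore $\norm{g}_{\overline{X}}\leq\norm{f\chi_{R\setminus E}}_X$. For $s\geq t$ we have $s/2+t/2\leq s$, so
\begin{equation*}
    f^*(s)\chi_{[t,\infty)}(s)\leq f^*\left(\tfrac{s}{2}+\tfrac{t}{2}\right)=g\left(\tfrac{s}{2}\right).
\end{equation*}
Thus $f^*\chi_{[t,\infty)}\leq D_2 g$, where $D_2g(s):=g(s/2)$ is the dilation operator. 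I would invoke the boundedness of dilation operators on r.i.~quasi-Banach function spaces over $[0,\infty)$ (available in~\cite{MuNePeTu}). This gives $\norm{f^*\chi_{[t,\infty)}}_{\overline{X}}\leq \norm{D_2}\,\norm{f\chi_{R\setminus E}}_X$. Taking the infimum over $E$ yields the claim with $C_2=\norm{D_2}_{\overline{X}\to\overline{X}}$. The loss from $t$ to $t/2$ is exactly what makes this dilation trick available.

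\emph{Main obstacle.} The main obstacle is the technical construction of $H$ in the first part. The case $\lim_{s\to\infty}f^*(s)>0$ prevents a direct use of Corollary~\ref{ryff_cor}, which is why I would use the level-set sandwich $\{f>f^*(t)\}\subset H\subset\{f\geq f^*(t)\}$ rather than a measure-preserving transformation. In the atomic case one also has to check that $t/2$ need not lie in the range of $\mu$; this is harmless because $\delta_X$ is defined via sets with $\mu(E)\leq t/2$. A secondary point is to make sure that the cited dilation bound holds in the quasinormed r.i.~setting with the representation $\overline{X}$ fixed earlier.
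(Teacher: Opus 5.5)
Your second inequality is correct and is essentially the paper's argument. Both fix a near-optimal set of measure at most $t/2$, bound $f^*\chi_{[t,\infty)}$ pointwise by a dilate of the rearrangement of $f$ outside that set, and use boundedness of the dilation $g\mapsto g(\cdot/2)$ on $\overline{X}$. The paper takes that bound from \cite[Theorem 3.23]{Pesaquasi}.

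The gap is in the first inequality, in exactly the case you call the main obstacle. A set $H$ with $\mu(H)=t$ and $\{f>f^*(t)\}\subset H\subset\{f\ge f^*(t)\}$ need not exist. Nonatomicity only lets you fill in measure between the two level sets; nothing guarantees $\mu(\{f\ge f^*(t)\})\ge t$. Take $R=[0,\infty)$ with Lebesgue measure and $f(x)=x/(1+x)$. Then $f_*(\tau)=\infty$ for $\tau<1$ and $f_*(\tau)=0$ for $\tau\ge 1$, so $f^*\equiv 1$, while $\{f\ge f^*(t)\}=\emptyset$. The problem can only arise when $f^*(t)=\lim_{s\to\infty}f^*(s)>0$. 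In that case $\{f\ge f^*(t)\}=\bigcap_{\tau<f^*(t)}\{f>\tau\}$ is an intersection of sets of infinite measure, continuity from above is unavailable, and the intersection may have measure less than $t$. In the atomic case the ``$t/\alpha$ atoms carrying the largest values of $f$'' likewise need not exist; take $f=1-1/n$ on the $n$-th atom.

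The repair is a case distinction.
\begin{itemize}
\item If $f^*(t)>\lim_{s\to\infty}f^*(s)$, then $t<f_*(\tau)<\infty$ for all $\tau$ slightly below $f^*(t)$. Continuity from above gives $\mu(\{f\ge f^*(t)\})\ge t$, so the sandwich exists and your verification of $(f\chi_{R\setminus H})^*\le f^*(\cdot+t)$ goes through.
\item If $f^*(t)=\lim_{s\to\infty}f^*(s)$ (the case $f^*(t)=\infty$ being trivial), then $f^*(\cdot+t)\equiv f^*(t)$. You only need some $H\supseteq\{f>f^*(t)\}$ with $\mu(H)=t$, which exists because $\mu(\{f>f^*(t)\})=f_*(f^*(t))\le t$. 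Then $f\chi_{R\setminus H}\le f^*(t)$ pointwise, so again $(f\chi_{R\setminus H})^*\le f^*(\cdot+t)$.
\end{itemize}

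Once repaired, your route genuinely differs from the paper's and is somewhat more economical. The paper uses the measure-preserving map of Corollary~\ref{ryff_cor} when $\lim f^*=0$. When $\lim f^*=\alpha>0$ it passes to $\max\{f,\alpha\}$, splits off the constant $\alpha$, and needs \cite[Theorem 4.16]{MuNePeTu} to know that $\norm{\chi_{[t,\infty)}}_{\overline{X}}=\norm{\chi_R}_X<\infty$, losing a factor $2C_X$. Your level-set choice of $H$ needs neither Ryff's theorem nor that finiteness result.

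Your splitting of $f^*(\cdot+t)$ into two pieces is also unnecessary. The function $f^*(\cdot+t)$ is precisely the nonincreasing rearrangement of $f^*\chi_{[t,\infty)}$, so (P6) for $\overline{X}$ gives $\norm{f^*(\cdot+t)}_{\overline{X}}=\norm{f^*\chi_{[t,\infty)}}_{\overline{X}}$. You therefore obtain $C_1=1$ in all cases.
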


\begin{proof}
    We start with the second inequality. Let $\varepsilon>0$, and $f\in\M_0(R,\mu)$. By the definition of $\delta_X$, we find a set $H\subset R$, $\mu(H) \leq t/2$ such that
    \begin{equation*}
        \norm{f\chi_{R\setminus H}}_X < \delta_Xf\left(\frac{t}{2}\right) + \varepsilon.
    \end{equation*}
    From the properties of the nonincreasing rearrangement, we get the following pointwise inequality
    \begin{equation*}
        g^*(s)-h^*\left(\frac{s}{2}\right)\leq (g-h)^*\left(\frac{s}{2}\right),
    \end{equation*}
    for all $g,h\in\M_0(R,\mu)$ and $s \in (0,\infty)$. 
    We notice that $(f\chi_H)^*\left(s/2\right)=0$ for all $s\in [t,\infty)$,
    so for all $s\in [t,\infty)$, we get
    \begin{equation*}
        f^*(s)=f^*(s)-(f\chi_H)^*\left(\frac{s}{2}\right)\leq(f\chi_{R\setminus H})^*\left(\frac{s}{2}\right).
    \end{equation*}
    Using this inequality and the boundedness of the dilation operator $D_\frac{1}{2}$ (see~\cite[Theorem 3.23]{Pesaquasi}), we calculate
    \begin{equation*}
        \begin{aligned}
            \norm{f^*\chi_{[t,\infty)}}_{\overline{X}}
                        &\leq \norm{ D_{\frac{1}{2}} \left(\left(f\chi_{R\setminus H}\right)^* \right)\chi_{[t,\infty)}}_{\overline{X}} \\
                        &\leq \norm{D_{\frac{1}{2}}}_{\overline{X}\rightarrow \overline{X}} \norm{\left(f\chi_{R\setminus H}\right)^*}_{\overline{X}} \\
                        &=\norm{D_{\frac{1}{2}}}_{\overline{X}\rightarrow \overline{X}} \norm{f\chi_{R\setminus H}}_{X} \\
                        &< \norm{D_{\frac{1}{2}}}_{\overline{X}\rightarrow \overline{X}} \left( \delta_Xf\left(\frac{t}{2}\right) + \varepsilon\right).
        \end{aligned}
    \end{equation*}
    
    We now move to the first inequality. Without loss of generality, we may consider only functions $f\in \M_+(R,\mu)\cap \M_0(R,\mu)$ and we denote
    \begin{equation*}
        \alpha:=\lim_{t\to\infty} f^*(t).
    \end{equation*}
    If $\mu(\supp f)\leq t$, the statement is clear because both terms evaluate to 0. Hence, we may only consider $\mu(\supp f)>t$. 
    
    First, suppose that $\alpha=0$. We need to distinguish two cases:
    \begin{itemize}[leftmargin=5mm]
        \item If the measure space $(R,\mu)$ is nonatomic, then by Theorem~\ref{ryff}, there exists a measure preserving transformation $\sigma\colon \supp f\rightarrow \supp f^*$ such that $f=f^*\circ \sigma$ a.e.~on $\supp f$. We put $G:=\sigma^{-1}((0,t))$. Then $\mu(G)=t$ and $f\chi_{R\setminus G}=f^*\chi_{[t,\infty)}\circ \sigma$ a.e.~on $\supp f$. And so, $f^*\chi_{[t,\infty)}$ and $f\chi_{R\setminus G}$ are equimeasurable.
        \item If $(R,\mu)$ is completely atomic with atoms of measure equal to $\beta$, then $f^*$ is a right-continuous decreasing step function with jumps occurring only at points $k\beta$, $k\in\N$. Since $\alpha=0$, it is easy to construct an ordering $\sigma\colon \N \to \R$ such that $f(\sigma(k))=f^*((k-1)\beta)$, $k\in\N$. Let $G=\sigma(\{1,\ldots,t/\beta\})$. Then $\mu(G)=t$ and it is clear from the construction that $f^*\chi_{[t,\infty)}$ and $f\chi_{R\setminus G}$ are equimeasurable.
    \end{itemize}
    In both cases, it follows that
    \begin{equation*}
        \delta_Xf(t) \leq \norm{f\chi_{R\setminus G}}_X=\norm{f^*\chi_{[t,\infty)}}_{\overline{X}}.
    \end{equation*}
    
    Next, suppose that $\alpha>0$ and that $\norm{f^*\chi_{[t,\infty)}}_{\overline{X}}<\infty$, otherwise the inequality is trivial. Since we have $\alpha=\lim_{s\to\infty }f^*(s)\chi_{[t,\infty)}(s)$, the result~\cite[Theorem 4.16]{MuNePeTu} states that there exists a constant $C>0$ such that
    \begin{equation*}
        \norm{\chi_{[0,\infty)}}_{\overline{X}} = \norm{\chi_{[t,\infty)}}_{\overline{X}} = \norm{\chi_{R \setminus E}}_X = \norm{\chi_R}_X = C < \infty,
    \end{equation*}
    for every $t>0$ and every $E \subset R$, $\mu(E) < \infty$ (the first three equalities are due to equimeasurability of the functions in question). Let now $f_0:=\max\{f,\alpha\}$. Then $f^*=f_0^*$ and $f\leq f_0$. Using these facts, the calculation in the previous case for the function $f_0-\alpha$, and the properties of the nonincreasing rearrangement, we obtain
    \begin{equation*}
        \begin{aligned}
            \delta_Xf(t)&\leq\inf_{\substack{E\subset R \\\mu(E) \leq t}} \norm{f_0 \chi_{R\setminus E}}_X \\
            &\leq C_X\inf_{\substack{E\subset R \\\mu(E) \leq t}} \left( \norm{\left(f_0-\alpha\right) \chi_{R\setminus E}}_X + \norm{\alpha \chi_{R\setminus E}}_X\right) \\
            &= C_X\left(\delta_X \left(f_0-\alpha\right)(t) + \alpha C \right)\\
            &\leq C_X\left(\norm{\left(f_0-\alpha\right)^*\chi_{[t,\infty)}}_{\overline{X}} + \alpha \norm{\chi_{[t,\infty)}}_{\overline{X}} \right)\\
            &\leq C_X\left(\norm{f^*\chi_{[t,\infty)}}_{\overline{X}} + \norm{f^*\chi_{[t,\infty)}}_{\overline{X}} \right)\\
            &= 2C_X \norm{f^*\chi_{[t,\infty)}}_{\overline{X}}.
        \end{aligned}
    \end{equation*}
\end{proof}

Finally, we can combine these results to obtain the desired conclusion. That is, for r.i.~quasi-Banach function norms the two amalgam quasinorms are, indeed, equivalent and thus produce the same topology. Notice that rearrangement-invariant spaces are always admissible, so we do not need to explicitly assume it.

\begin{thm}\label{gwl=wl}
    Let $\norm{\cdot}_A$, $\norm{\cdot}_B$ be r.i.~quasi-Banach function norms and $A$, $B$ their corresponding r.i.~quasi-Banach function spaces. Then 
    \begin{equation*}
        \norm{\cdot}_{\GWL(A,B)}\approx \norm{\cdot}_{\WL(A,B)}.
    \end{equation*}
\end{thm}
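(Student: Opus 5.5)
The plan is to compare the local and global parts of the two quasinorms separately, using Propositions~\ref{local_ri_equiv} and~\ref{global_norm_rearrangement} with $t=1$ and $t=2$. The adjustment of the measure parameter is then absorbed by Proposition~\ref{gwl_norm_equivalence}. Since $(R,\mu)$ is nonatomic, it is resonant, so both propositions apply.

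Admissibility of $\norm{\cdot}_A$, which is needed for Proposition~\ref{gwl_norm_equivalence}, comes from rearrangement invariance: $\norm{\chi_E}_A=\norm{\chi_{[0,t)}}_{\overline{A}}>0$ for every $E$ with $\mu(E)=t$. We may restrict to $f\in\M_0(R,\mu)$. If $f$ is infinite on a set of positive measure, then both quasinorms are infinite. Indeed, on the $\WL$ side $f^*\equiv\infty$ on some interval $[0,s)$, and on the $\GWL$ side some set of measure at most $1$ carries $\infty$. Since $\norm{f}_{\WL}$ uses the interval $[0,1]$, we first note that $\norm{f^*\chi_{[0,1]}}_{\overline{A}}=\norm{f^*\chi_{[0,1)}}_{\overline{A}}$, because the two functions agree a.e.

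For the upper bound of $\norm{\cdot}_{\GWL(A,B)}$, Proposition~\ref{local_ri_equiv} gives $\norm{f}_{A_{\loc}}=\norm{f^*\chi_{[0,1)}}_{\overline{A}}$ exactly. The first inequality of Proposition~\ref{global_norm_rearrangement} with $t=1$ gives $\norm{f}_{B_{\glob}}\le C_1^{-1}\norm{f^*\chi_{[1,\infty)}}_{\overline{B}}$. Summing yields $\norm{f}_{\GWL(A,B)}\lesssim\norm{f}_{\WL(A,B)}$.

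The reverse bound is where the loss of a factor $2$ in the second inequality of Proposition~\ref{global_norm_rearrangement} must be handled; this is the main obstacle. Applying that inequality with $t=1$ gives $\norm{f^*\chi_{[1,\infty)}}_{\overline{B}}\le C_2\,\delta_Bf(1/2)$. By Proposition~\ref{global_properties}, more precisely its final displayed estimate,
\begin{equation*}
\delta_Bf(1/2)\le C_B\max\{1,\varphi_B^{\max}(1/2)/\varphi_A^{\min}(1/2)\}\norm{f}_{\GWL(A,B)}.
\end{equation*}
This step uses admissibility of $A$, which was established above. Combining it with the exact identity for the local part gives $\norm{f}_{\WL(A,B)}\lesssim\norm{f}_{\GWL(A,B)}$.

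Alternatively, one could apply Proposition~\ref{global_norm_rearrangement} with $t=2$ and then use Proposition~\ref{gwl_norm_equivalence} to pass from $\sigma_A(\cdot)(2)+\delta_B(\cdot)(1)$ back to the parameters $(1,1)$. Either route yields equivalence constants that depend only on $A$ and $B$, which completes the proof.
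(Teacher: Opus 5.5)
Your argument is correct and is essentially the paper's proof. The paper likewise combines Propositions~\ref{local_ri_equiv} and~\ref{global_norm_rearrangement} at $t=1$ and absorbs the halving in $\delta_Bf(1/2)$ through Proposition~\ref{gwl_norm_equivalence}, whose relevant content is exactly the final estimate of Proposition~\ref{global_properties} that you invoke (with admissibility of $\norm{\cdot}_A$ coming from rearrangement invariance, as you note).

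Your $t=2$ alternative is imprecise as written. It controls $\norm{f^*\chi_{[2,\infty)}}_{\overline{B}}$ rather than $\norm{f^*\chi_{[1,\infty)}}_{\overline{B}}$, so it would need the extra elementary bound $\norm{f^*\chi_{[1,2)}}_{\overline{B}}\le f^*(1)\norm{\chi_{[1,2)}}_{\overline{B}}\lesssim\norm{f^*\chi_{[0,1)}}_{\overline{A}}$. Your main route does not depend on it.
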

\begin{proof}
    We simply use Propositions~\ref{gwl_norm_equivalence},~\ref{local_ri_equiv}, and~\ref{global_norm_rearrangement}.
\end{proof}

\subsection{Associate space and normability}

Now, we would like to characterize the associate space of our amalgam space. It turns out that the fundamental result from~\cite{Amalgam} still holds. However, our new construction of these spaces requires for a more sophisticated approach. The main new ingredient that we need is an observation about admissible spaces. That is, the functions inside such space cannot grow infinitely. We now define this property. Once again, note that sets of finite measure can still \say{escape to infinity}.

\begin{defn}
    Let $t>0$. We say that a function $f\in\M(R,\mu)$ is \textsl{globally bounded} if there exists a set $E\subset R$, $\mu(E)<\infty$ satisfying $\norm{f\chi_{R\setminus E}}_{L^\infty} < \infty$.
\end{defn}

With this we can state our observation. Recall that we consider the measure space $(R,\mu)$ to be nonatomic.

\begin{lemma}\label{cut_to_bound}
    Let $\norm{\cdot}_X$ be an admissible quasi-Banach function norm and let $X$ be the corresponding quasi-Banach function space. Let $f\in \M(R,\mu)$ be such that $\norm{f}_{X_{\loc}}<\infty$. Then $f$ is globally bounded.
\end{lemma}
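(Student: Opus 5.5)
We argue by contradiction. Suppose $f$ is not globally bounded, and consider the level sets $F_k:=\{x\in R: \abs{f(x)}>k\}$, $k\in\N$. If $\mu(F_k)<\infty$ for some $k$, then $E:=F_k$ gives $\norm{f\chi_{R\setminus E}}_{L^\infty}\leq k<\infty$, so $f$ would be globally bounded. Hence $\mu(F_k)=\infty$ for every $k\in\N$. Since $(R,\mu)$ is nonatomic and $\sigma$-finite, the classical Sierpi\'nski theorem (applied to $F_k\cap R_n$ for a large enough $n$, where $R_n\nearrow R$ is an exhausting sequence of sets of finite measure) gives a set $G_k\subset F_k$ with $\mu(G_k)=1$.

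Next we bound the local norm from below using admissibility. By (P2) of $\norm{\cdot}_X$ and $\abs{f}\geq k$ on $G_k$,
\begin{equation*}
    \norm{f}_{X_{\loc}} \geq \norm{f\chi_{G_k}}_X \geq k\norm{\chi_{G_k}}_X \geq k\,\varphi^{\min}_X(1).
\end{equation*}
Since $\norm{\cdot}_X$ is admissible, $\varphi_X^{\min}(1)>0$. Letting $k\to\infty$ therefore yields $\norm{f}_{X_{\loc}}=\infty$, which contradicts the assumption.

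The argument is short, and the only point requiring care is the existence of $G_k$ with $\mu(G_k)=1$ exactly (or at least $\mu(G_k)\leq 1$ with a positive lower bound that does not depend on $k$). This is exactly where nonatomicity and $\sigma$-finiteness are used. If $\mu(F_k)=\infty$, then $\mu(F_k\cap R_n)\to\infty$, so some $F_k\cap R_n$ has finite measure at least $1$. Sierpi\'nski's theorem then produces a subset of measure exactly $1$, which lies within the range required by the definition of $\varphi_X^{\min}$.

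Finally, we can phrase the argument directly instead of by contradiction: for every $k$ with $k>\norm{f}_{X_{\loc}}/\varphi^{\min}_X(1)$ the argument above forces $\mu(F_k)<\infty$, and then $E=F_k$ witnesses that $f$ is globally bounded.
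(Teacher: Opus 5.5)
Your proof is correct and follows the paper's argument essentially verbatim: the level sets $\{\abs{f}>k\}$ must all have infinite measure, Sierpi\'nski's theorem gives unit-measure subsets $G_k$, and admissibility yields $\norm{f}_{X_{\loc}}\geq k\,\varphi_X^{\min}(1)\to\infty$. Your added remark, using $\sigma$-finiteness to first find a finite-measure piece of $F_k$ of measure at least $1$ before applying Sierpi\'nski, only makes explicit a step the paper leaves implicit.
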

\begin{proof}
    Suppose that a function $f\in\M(R,\mu)$ such that $\norm{f}_{X_{\loc}}<\infty$ is not globally bounded. Then ${\mu\left(\{x\in R : \abs{f(x)} >n\}\right)} = \infty$, for every $n\in \N$. Indeed, if this is false, then there exists $n\in\N$ such that $f$ is globally bounded with $E=\{x\in R : \abs{f(x)} >n\}$. Hence, we may use the classical Sierpi\'nski theorem to find sets $G_n\subset R$ such that $\mu(G_n)=1$, and $\abs{f}>n$ on $G_n$. Due to the admissibility of $X$, this gives
    \begin{equation*}
        \norm{f}_{X_{\loc}}\geq \norm{f\chi_{G_n}}_X\geq n \varphi_X^{\min}(1),
    \end{equation*}
    for every $n\in \N$. This yields a contradiction with our assumptions.
\end{proof}

This example shows that the conclusion of the previous lemma may not hold if the space is not admissible.

\begin{example}
 Let $w(x):=\min\{1,1/x^3\}$ and consider the quasi-Banach function space $L^1(w)([0,\infty),\lambda)$. In Example~\ref{non_admissibility}, we have shown that this space is not admissible. Let $f(x):=x$, $x>0$. Then 
    \begin{equation*}
        \norm{f}_{L^1(w)}=\int_0^\infty \min\left\{x,\frac{1}{x^2}\right\} \, dx < \infty.
    \end{equation*}
    However, $f$ is obviously not globally bounded because it has a blow-up at infinity.
\end{example}

We finally move on to our characterization of the associate space. From now on the symbol $\GWL(A,B)$ denotes the space induced by $\norm{\cdot}_{\GWL(A,B)}$, which can be considered a quasi-Banach function space due to Theorem~\ref{gwl_quasinormability}.

\begin{thm}\label{duality}
 Let $\norm{\cdot}_A$, $\norm{\cdot}_B$ be quasi-Banach function norms with the property (P5) and $A$, $B$ their corresponding quasi-Banach function spaces. Then, there exists a constant $C>0$ such that 
\begin{equation}\label{dual_equivalence}
    2^{-1} \norm{f}_{(\GWL(A,B))'} \leq \norm{f}_{\GWL(A',B')} \leq C \norm{f}_{(\GWL(A,B))'},
\end{equation}
for every $f\in\M(R,\mu)$. \par
In other words
\begin{equation*}
    (\GWL(A,B))'= \GWL(A',B').
\end{equation*}
\end{thm}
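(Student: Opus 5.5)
The plan is to prove the two inequalities in \eqref{dual_equivalence} separately. The left one is a direct H\"older-type splitting argument. The right one needs a truncation argument for the global part, which I expect to be the main obstacle.

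\emph{First inequality.} Fix $f$ and $g$ with $\norm{g}_{\GWL(A,B)}\leq 1$, and let $\varepsilon>0$.
\begin{enumerate}[label=(\alph*)]
\item Choose $E'$ with $\mu(E')\leq 1$ and $\norm{g\chi_{R\setminus E'}}_B<\norm{g}_{B_{\glob}}+\varepsilon$. Choose $F$ with $\mu(F)\leq1$ and $\norm{f\chi_{R\setminus F}}_{B'}<\norm{f}_{B'_{\glob}}+\varepsilon$.
\item Split $R$ into $E'$, $F\setminus E'$ and $R\setminus(E'\cup F)$, and apply Theorem~\ref{holder} on each piece.
\item On the first two pieces we pair $A'$ with $A$. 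Each contributes at most $\norm{f}_{A'_{\loc}}\norm{g}_{A_{\loc}}$, because both sets have measure at most $1$.
\item On the last piece we pair $B'$ with $B$. It contributes at most $\norm{f\chi_{R\setminus F}}_{B'}\norm{g\chi_{R\setminus E'}}_B$.
\item Summing and letting $\varepsilon\to0$ gives $\int_R\abs{fg}\,d\mu\leq 2ac+bd\leq 2(a+b)(c+d)$ in the obvious notation. Taking the supremum over $g$ yields the constant $2$.
\end{enumerate}

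\emph{Second inequality, local part.} For $\mu(E)\leq 1$ and $h$ supported in $E$ we have $\norm{h}_{B_{\glob}}=0$, and hence $\norm{h}_{\GWL(A,B)}\leq\norm{h}_A$. Therefore
\begin{equation*}
\norm{f\chi_E}_{A'}=\sup_{\norm{h}_A\leq1}\int_R\abs{f\chi_E h}\,d\mu\leq\norm{f}_{(\GWL(A,B))'}.
\end{equation*}
Taking the supremum over $E$ gives $\norm{f}_{A'_{\loc}}\leq\norm{f}_{(\GWL(A,B))'}$.

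Two facts about $A'$ and $B'$ will be used for the global part. By Theorem~\ref{second_dual}, $A'$ and $B'$ are Banach function norms. Since $(R,\mu)$ is nonatomic, Theorem~\ref{lenka4.4} then gives $\varphi_{A'}^{\min}(1)>0$ and $\varphi_{B'}^{\max}(1)<\infty$.

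\emph{Second inequality, global part.} This is the hard step, because the constant $C$ must not depend on $f$. We may assume $f\geq0$ and $\norm{f}_{(\GWL(A,B))'}<\infty$.
\begin{enumerate}[label=(\alph*)]
\item Construct the set $E$. If $\mu(\supp f)\leq 1$ there is nothing to prove. Otherwise apply Theorem~\ref{ryff} on a set of finite measure $\geq 1$, approximating if necessary by the $\sigma$-finiteness and (P3). This gives $E$ with $\mu(E)=1$, $f\geq s$ on $E$ and $f\leq s$ off $E$, for a level $s$ roughly $f^*(1)$.
\item Bound the level $s$. Since $s\,\varphi^{\min}_{A'}(1)\leq\norm{f\chi_E}_{A'}\leq\norm{f}_{(\GWL(A,B))'}$, we get $s\leq \norm{f}_{(\GWL(A,B))'}/\varphi_{A'}^{\min}(1)$.
\item Fix $g$ with $\norm{g}_B\leq1$ and put $M:=2/\varphi_B^{\min}(1)$, which is finite because $B$ is admissible by Proposition~\ref{p5admissible}. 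Set $H:=\{\abs{g}>M\}$. If $\mu(H)\geq 1$, then $H$ would contain a set of measure $1$, giving $\norm{g}_B\geq M\varphi_B^{\min}(1)=2$, a contradiction. Hence $\mu(H)<1$.
\item Small values of $g$. The function $g\chi_{R\setminus H}$ satisfies $\norm{g\chi_{R\setminus H}}_{A_{\loc}}\leq M\varphi_A^{\max}(1)$ and $\norm{g\chi_{R\setminus H}}_{B_{\glob}}\leq 1$. Therefore
\begin{equation*}
\int_{R\setminus(E\cup H)}fg\,d\mu\leq\bigl(M\varphi_A^{\max}(1)+1\bigr)\norm{f}_{(\GWL(A,B))'}.
\end{equation*}
\item Large values of $g$. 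Here we use $f\leq s$ off $E$ and Theorem~\ref{holder} with $B'$, obtaining
\begin{equation*}
\int_{H\setminus E}fg\,d\mu\leq s\,\varphi_{B'}^{\max}(1)\norm{g}_B\leq\frac{\varphi_{B'}^{\max}(1)}{\varphi_{A'}^{\min}(1)}\norm{f}_{(\GWL(A,B))'}.
\end{equation*}
\item Taking the supremum over $g$ bounds $\norm{f\chi_{R\setminus E}}_{B'}$, and hence $\norm{f}_{B'_{\glob}}$, by a constant depending only on $A$ and $B$ times $\norm{f}_{(\GWL(A,B))'}$.
\end{enumerate}
Adding the local part gives the constant $C$.

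The main obstacle is step (e), the part of $g$ where it is large. That $\mu(H)<1$ only places $g\chi_H$ in a "local" position, and $g\chi_H$ need not belong to $A$, so it cannot be tested against $\norm{\cdot}_{(\GWL(A,B))'}$. The trick is to estimate $f$ there instead, through its uniformly controlled level $s$ and the finiteness of $\varphi_{B'}^{\max}(1)$.
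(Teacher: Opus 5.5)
Your proof is correct in substance, and for the right-hand inequality it takes a genuinely different route from the paper. The left-hand inequality matches the paper: your splitting into $E'$, $F\setminus E'$, $R\setminus(E'\cup F)$ is the paper's splitting with $E_1$, $E_2$. The local half of the right-hand inequality also matches the paper.

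One claim needs repair. In step (a) of the global part, a set $E$ with $\mu(E)=1$, $f\geq s$ on $E$ and $f\leq s$ off $E$ need not exist for any level $s$. This happens when $\mu(\{f>\lambda\})=\infty$ for all $\lambda<f^*(1)$ while $\mu(\{f\geq f^*(1)\})<1$. An example is $f=\chi_F+\sum_k(1-2^{-k})\chi_{F_k}$ with $F,F_1,F_2,\dots$ disjoint, $\mu(F)=\frac12$ and $\mu(F_k)=\infty$; this $f$ has finite $(\GWL(L^1,L^1))'$-norm. Your suggested approximation via $\sigma$-finiteness and (P3) is not a direct remedy. The exceptional sets move along the approximation, and the global functional has only the weak Fatou property, so you would have to invoke Theorem~\ref{weak_fatou_char} and lose the explicit constant.

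The clean fix is to decouple the two roles of $E$. Put $s:=f^*(1)$ and $E:=\{f>s\}$, so that $\mu(E)\leq1$ and $f\leq s$ off $E$. For each $\lambda<s$ pick $G\subset\{f>\lambda\}$ with $\mu(G)=1$. This gives $\lambda\varphi^{\min}_{A'}(1)\leq\norm{f\chi_G}_{A'}\leq\norm{f}_{(\GWL(A,B))'}$, hence $s\leq\norm{f}_{(\GWL(A,B))'}/\varphi^{\min}_{A'}(1)$, and in particular $s<\infty$. Steps (c)--(f) use nothing more, and Theorem~\ref{ryff} is not needed.

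For the hard direction the paper argues qualitatively and proves only the inclusion $(\GWL(A,B))'\subset\GWL(A',B')$, by contradiction. It uses Landau's resonance theorem (Theorem~\ref{associate_char}) to get $g$ with $\norm{g}_B\leq1$ and $\int_{R\setminus E_1}\abs{fg}\,d\mu=\infty$. It then applies Lemma~\ref{cut_to_bound} to $f$ (in $A'$) and to $g$ (in $B$) to remove the regions where they are unbounded, and uses (P5) of $\norm{\cdot}_B$ on the remaining finite-measure piece. The constant $C$ then comes abstractly from Theorem~\ref{continuous_embeddings}. That step needs Corollary~\ref{gwl_quasinormability}, i.e. the $\fat$-machinery of Section~\ref{weak_fatou_sec}, to know that $\GWL(A',B')$ is a quasi-Banach function space.

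Your argument is the quantitative counterpart of the same decomposition. The truncation level $M=2/\varphi_B^{\min}(1)$ is a uniform version of Lemma~\ref{cut_to_bound} for $g$, and the bound on $s$ is a uniform version of it for $f$. The (P5) step is replaced by H\"older's inequality with $\varphi_{B'}^{\max}(1)$. Your route avoids the resonance theorem, the automatic continuity of inclusions and the Fatou regularization. It yields the explicit constant $C=2+2\varphi_A^{\max}(1)/\varphi_B^{\min}(1)+\varphi_{B'}^{\max}(1)/\varphi_{A'}^{\min}(1)$. The paper's route is shorter once that machinery is in place, and it matches the paper's overall strategy of reducing embeddings to inclusions.
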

\begin{proof}
Theorem~\ref{second_dual} states that  both $\norm{\cdot}_{A'}$ and $\norm{\cdot}_{B'}$ are Banach function norms. In particular, they are admissible by Proposition~\ref{p5admissible}. We start with the left inequality in~\eqref{dual_equivalence}. Let $f\in\M(R,\mu)$. Note that we may use the open unit ball instead of the closed unit ball in the definition of the associate norm. Let $g\in \GWL(A,B)$, $\norm{g}_{\GWL(A,B)}<1$, and $\varepsilon>0$. Then 
\begin{equation}\label{sup<1}
    \norm{g}_{A_{\loc}}<1,
\end{equation}
and
\begin{equation}\label{inf<1}
    \norm{g}_{B_{\glob}}<1.
\end{equation}
By~\eqref{inf<1}, there exists $E_1\subset R$, $\mu(E_1)=1$ such that
\begin{equation*}
    \norm{g\chi_{R\setminus E_1}}_B < 1.
\end{equation*}
Next, we find $E_2\subset R$, $\mu(E_2)=1$ such that
\begin{equation}\label{inf_approx}
    \sup_{\norm{h}_B \leq 1} \int_{R\setminus E_2} \abs{fh} \, d\mu < \inf_{\mu(E)=1} \sup_{\norm{h}_B \leq 1} \int_{R\setminus E} \abs{fh} \, d\mu + \varepsilon.
\end{equation}
Using the definition of the associate norm, \eqref{sup<1} and \eqref{inf_approx}, we calculate
\begin{equation*}
    \begin{aligned}
        \int_R \abs{fg} \, d\mu &= \int_{E_1} \abs{fg} \, d\mu +\int_{R\setminus E_1} \abs{fg} \, d\mu \\
                                &\leq \int_{E_1} \abs{fg} \, d\mu +\int_{(R\setminus E_1)\setminus E_2} \abs{fg} \, d\mu
                                                                + \int_{(R\setminus E_1)\cap E_2} \abs{fg} \, d\mu \\
                                &\leq \int_{E_1} \abs{fg} \, d\mu +\int_{R\setminus E_2} \abs{fg} \chi_{R\setminus E_1} \, d\mu
                                                                + \int_{E_2} \abs{fg} \, d\mu \\
                                &\leq 2 \sup_{\mu(E)=1} \sup_{\norm{h}_A\leq 1} \int_{E} \abs{fh} \, d\mu + \sup_{\norm{h}_B\leq 1} \int_{R\setminus E_2} \abs{fh} \, d\mu \\
                                &< 2 \sup_{\mu(E)=1} \sup_{\norm{h}_A\leq 1} \int_{E} \abs{fh} \, d\mu + \inf_{\mu(E)=1} \sup_{\norm{h}_B \leq 1} \int_{R\setminus E} \abs{fh} \, d\mu + \varepsilon \\ 
                                &\leq 2 \norm{f}_{\GWL(A',B')} + \varepsilon.
    \end{aligned}
\end{equation*}
The other inequality in~\eqref{dual_equivalence} is going to require more work. We will show it indirectly using Theorem~\ref{continuous_embeddings}. That is, the inequality holds if and only if $(\GWL(A,B))'\subset \GWL(A',B')$. Assume that $f\notin \GWL(A',B')$. Then at least one of the following statements is true
\begin{equation}\label{local_infinity}
    \sup_{\substack{E\subset R \\ \mu(E)=1}} \norm{f \chi_E}_{A'} = \infty, 
\end{equation}
or
\begin{equation}\label{global_infinity}
    \inf_{\substack{E\subset R \\ \mu(E)=1}} \norm{f \chi_{R\setminus E}}_{B'} = \infty.
\end{equation}
If~\eqref{local_infinity} holds, by the definition of the associate norm there exist $g_n\in A$, $\norm{g}_A\leq 1$ and $E_n\subset R$, $\mu(E_n)=1$ such that 
\begin{equation*}
    \int_{E_n} \abs{fg_n} \, d\mu \nearrow \infty.
\end{equation*}
We observe that $\norm{g_n \chi_{E_n}}_{\GWL(A,B)}\leq 1$, so
\begin{equation}
    \norm{f}_{(\GWL(A,B))'}= \sup_{\norm{h}_{\GWL(A,B)} \leq 1} \int_R \abs{fh} \, d\mu \geq \int_{E_n} \abs{fg_n} \, d\mu \nearrow \infty.
\end{equation}
If~\eqref{global_infinity} is true and~\eqref{local_infinity} is false, then by Proposition~\ref{local_estimate}, we have $\sigma_{A'}f(t)<\infty$, for any $t\geq 0$, and by Proposition~\ref{global_properties}, $\delta_{B'}f(t)=\infty$, for any $t>0$. Thus, for any $E\subset R$, $\mu(E)<\infty$, by Theorem~\ref{associate_char}, there exists $g_E$ such that $\norm{g_E}_B\leq1$ and 
\begin{equation}\label{infinite_integral}
    \int_{R\setminus E} \abs{fg_E} \, d\mu=\infty.
\end{equation}
Because $\norm{f}_{A'_{\loc}}<\infty$, we use Lemma~\ref{cut_to_bound} to find $E_1\subset R$, $\mu(E_1)<\infty$ and $C_1>0$ such that $\norm{f\chi_{R\setminus {E_1}}}_{L^\infty} \leq C_1$. Let $g\in \M(R,\mu)$, $\norm{g}_B \leq 1$ be such that~\eqref{infinite_integral} holds for $E_1$. Because $B$ is admissible and $\norm{g}_{B_{\loc}}\leq \norm{g}_B\leq1$, we again use Lemma~\ref{cut_to_bound} to find $E_2\subset R$, $\mu(E_2)<\infty$ and $C_2>0$ such that $\norm{g\chi_{R\setminus {E_2}}}_{L^\infty} \leq C_2$. Using the property (P5) of $\norm{\cdot}_B$, we calculate that 
\begin{equation*}
    \int_{(R\setminus E_1)\cap E_2} \abs{fg} \, d\mu \leq C_1 \int_{E_2} \abs{g} \, d\mu \leq C_1 C_{E_2} \norm{g}_B < \infty. 
\end{equation*}
Hence, by~\eqref{infinite_integral} it must be that 
\begin{equation}\label{infinite_integral2}
    \int_{(R\setminus E_1)\cap (R\setminus E_2)} \abs{fg} \, d\mu = \infty.
\end{equation}
We define $h:= g\chi_{(R\setminus E_1)\cap (R\setminus E_2)}$. Then $\norm{h}_{B_{\glob}}< \infty$ and
\begin{equation*}
    \norm{h}_{A_{\loc}} \leq \norm{g \chi_{R\setminus E_2}}_{A_{\loc}} \leq C_2 \varphi_A^{\max}(1)< \infty. 
\end{equation*}
So we have $h\in \GWL(A,B)$ and by~\eqref{infinite_integral2}
\begin{equation*}
    \int_R \abs{fh} \, d\mu = \infty, 
\end{equation*}
which shows that $f\notin (\GWL(A,B))'$.
\end{proof}
An important corollary of this theorem is the normability of our amalgam space if our original spaces are normable. 
\begin{cor}\label{normability}
     Let $\norm{\cdot}_A$, $\norm{\cdot}_B$ be Banach function norms and $A$, $B$ their corresponding Banach function spaces. Then there exists a Banach function norm $\norm{\cdot}_X$ such that 
     \begin{equation*}
         \norm{\cdot}_{\GWL(A,B)}\approx \norm{\cdot}_X.
     \end{equation*}
In other words, $\GWL(A,B)$ is a Banach function space endowed with the norm $\norm{\cdot}_X$.
\end{cor}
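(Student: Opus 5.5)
The plan is to realise $\GWL(A,B)$, up to equivalence of norms, as an associate space, and then use Theorem~\ref{second_dual}. By that theorem, Banach function norms satisfy (P5), and $\norm{\cdot}_A=\norm{\cdot}_{A''}$ and $\norm{\cdot}_B=\norm{\cdot}_{B''}$. Moreover, $\norm{\cdot}_{A'}$ and $\norm{\cdot}_{B'}$ are again Banach function norms, so they also satisfy (P5). Hence Theorem~\ref{duality} applies to the pair $A',B'$ as well as to $A,B$.

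Applying Theorem~\ref{duality} to $A',B'$ gives
\begin{equation*}
    \norm{\cdot}_{\GWL(A'',B'')} \approx \norm{\cdot}_{(\GWL(A',B'))'} .
\end{equation*}
Since $A''=A$ and $B''=B$ with equal norms, the left-hand side is exactly $\norm{\cdot}_{\GWL(A,B)}$. We therefore set
\begin{equation*}
    \norm{\cdot}_X := \norm{\cdot}_{(\GWL(A',B'))'} .
\end{equation*}

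It remains to check that $\norm{\cdot}_X$ is a Banach function norm. By Theorem~\ref{second_dual}, it suffices that the functional $\norm{\cdot}_{\GWL(A',B')}$ satisfies (P4), (P5) and $\norm{f}=\norm{\abs f}$. The last property is immediate from the definition.

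For (P4) and (P5) we use Theorem~\ref{gwl_quasinorm} with $A'$ in the role of $A$. It requires $\norm{\cdot}_{A'}$ to be admissible, which holds by Proposition~\ref{p5admissible} because $\norm{\cdot}_{A'}$ is a Banach function norm on a nonatomic space. The theorem then gives (P4) for $\norm{\cdot}_{\GWL(A',B')}$, and since $\norm{\cdot}_{A'}$ has (P5), it gives (P5) as well. Hence $\norm{\cdot}_{(\GWL(A',B'))'}$ is a Banach function norm equivalent to $\norm{\cdot}_{\GWL(A,B)}$.

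Alternatively, one could verify directly that $\norm{\cdot}_{\GWL(A,B)}\approx\norm{\cdot}_{(\GWL(A,B))''}$, which the equivalence criterion in Theorem~\ref{second_dual} also allows. This is the same argument, since by Theorem~\ref{duality} $(\GWL(A,B))'=\GWL(A',B')$ with equivalent norms.

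The only delicate step is confirming that the hypotheses of Theorems~\ref{duality} and~\ref{gwl_quasinorm} hold for the associate spaces. Concretely, this means (P5) for $A'$ and $B'$ and admissibility of $A'$, all of which follow from Theorem~\ref{second_dual} and Proposition~\ref{p5admissible}.
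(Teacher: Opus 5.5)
Your proposal is correct and follows the paper's proof: you apply Theorem~\ref{duality} to $A',B'$, use $\norm{\cdot}_{A''}=\norm{\cdot}_A$ and $\norm{\cdot}_{B''}=\norm{\cdot}_B$ to identify $\norm{\cdot}_{\GWL(A'',B'')}$ with $\norm{\cdot}_{\GWL(A,B)}$, and conclude via Theorems~\ref{second_dual} and~\ref{gwl_quasinorm} that $\norm{\cdot}_{(\GWL(A',B'))'}$ is a Banach function norm. You also spell out hypothesis checks the paper leaves implicit, namely (P5) for $A',B'$ and admissibility of $A'$ via Proposition~\ref{p5admissible}; note that (P5) for $A,B$ themselves holds by the definition of a Banach function norm rather than by Theorem~\ref{second_dual}.
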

\begin{proof}
Due to Theorem~\ref{second_dual}, we have $\norm{\cdot}_A=\norm{\cdot}_{A''}$ and $\norm{\cdot}_B=\norm{\cdot}_{B''}$.  Using this and Theorem~\ref{duality}, we obtain
\begin{equation*}
    \norm{\cdot}_{\GWL(A,B)}=\norm{\cdot}_{\GWL(A'',B'')}\approx \norm{\cdot}_{(\GWL(A',B'))'}.
\end{equation*}
Theorems~\ref{second_dual}, and~\ref{gwl_quasinorm} show that $\norm{\cdot}_{(\GWL(A',B'))'}$ is a Banach function norm, which concludes the proof.
\end{proof}

\subsection{Embeddings}

We continue by showing that the already known embedding results for Wiener--Luxemburg amalgam spaces translate nicely to our new context. As a corollary of this, we obtain an extension of a well-known classical result.

\begin{thm}\label{amalgam_embeddings}
    Let $\norm{\cdot}_A$, $\norm{\cdot}_B$, $\norm{\cdot}_C$, $\norm{\cdot}_D$ be quasi-Banach function norms and $A$, $B$, $C$, $D$ their corresponding quasi-Banach function spaces, and let $\norm{\cdot}_A,\norm{\cdot}_C$ be admissible. Then the following statements are true:
    \begin{enumerate}[label=(\roman*)]
        \item It holds that $\GWL(A,B)\hookrightarrow \GWL(C,B)$ if and only if 
        \begin{equation}\label{local_stronger}
            \norm{f}_{A_{\loc}} < \infty \implies \norm{f}_{C_{\loc}} < \infty,
        \end{equation}
        for every $f\in \M(R,\mu)$.
        \item It holds that $\GWL(A,B)\hookrightarrow \GWL(A,D)$ if and only if 
        \begin{equation}\label{global_stronger}
            \norm{f}_{B_{\glob}} < \infty \implies \norm{f}_{D_{\glob}} < \infty,
        \end{equation}
        for every globally bounded $f\in \M(R,\mu)$.
        \item It holds that $\GWL(A,B)\hookrightarrow \GWL(C,D)$ if and only if~\eqref{local_stronger} and~\eqref{global_stronger} are true for every $f\in \M(R,\mu)$ and every globally bounded $f\in \M(R,\mu)$, respectively.
    \end{enumerate}
\end{thm}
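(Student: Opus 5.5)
The overall strategy is to reduce everything to set inclusions via Theorem~\ref{continuous_embeddings}. By Corollary~\ref{gwl_quasinormability}, each $\GWL$ space is a quasi-Banach function space, since its quasinorm is equivalent to the Fatou-regularised one. So each embedding holds if and only if the corresponding inclusion of sets holds, and it suffices to prove (iii) at the level of inclusions. Statements (i) and (ii) then follow as the special cases $D=B$ and $C=A$. In those cases the unused condition is trivially satisfied, because $\norm{f}_{B_{\glob}}<\infty$ trivially implies itself, and likewise for the local part.

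\emph{Sufficiency.} Take $f\in\GWL(A,B)$. Then $\norm{f}_{A_{\loc}}<\infty$, and \eqref{local_stronger} gives $\norm{f}_{C_{\loc}}<\infty$. Since $A$ is admissible, Lemma~\ref{cut_to_bound} shows that $f$ is globally bounded. As $\norm{f}_{B_{\glob}}<\infty$, condition \eqref{global_stronger} yields $\norm{f}_{D_{\glob}}<\infty$, hence $f\in\GWL(C,D)$. This is exactly why the global condition only needs to be required for globally bounded functions.

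\emph{Necessity of \eqref{local_stronger}.} Suppose $\GWL(A,B)\subset\GWL(C,D)$ and $\norm{f}_{A_{\loc}}<\infty$. We cannot apply the inclusion to $f$ directly, because $\norm{f}_{B_{\glob}}$ may be infinite. The plan is to fix any $E$ with $\mu(E)=1$ and look at $f\chi_E$. It has support of measure one, so $\norm{f\chi_E}_{B_{\glob}}=0$, and $\norm{f\chi_E}_{A_{\loc}}\le\norm{f}_{A_{\loc}}$. Hence $f\chi_E\in\GWL(A,B)$. The embedding constant $K$ from Theorem~\ref{continuous_embeddings} then gives
\begin{equation*}
\norm{f\chi_E}_C\le\norm{f\chi_E}_{C_{\loc}}\le K\norm{f\chi_E}_{\GWL(A,B)}\le K\norm{f}_{A_{\loc}}.
\end{equation*}
Taking the supremum over $E$ yields $\norm{f}_{C_{\loc}}\le K\norm{f}_{A_{\loc}}<\infty$.

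\emph{Necessity of \eqref{global_stronger}.} This is the main obstacle. Let $f$ be globally bounded with $\norm{f}_{B_{\glob}}<\infty$. The issue is that $f$ may have a local blow-up, so that $\norm{f}_{A_{\loc}}=\infty$ and $f\notin\GWL(A,B)$. The fix is to remove that blow-up:
\begin{enumerate}[label=(\arabic*)]
\item Pick $F$ with $\mu(F)<\infty$ and $|f|\le M$ on $R\setminus F$, and set $g:=f\chi_{R\setminus F}$.
\item The local part of $g$ is finite: $\norm{g}_{A_{\loc}}\le M\varphi_A^{\max}(1)<\infty$, using Theorem~\ref{lenka4.4} (nonatomic case).
\item The global part of $g$ is finite: $\norm{g}_{B_{\glob}}\le\norm{f}_{B_{\glob}}<\infty$ by the lattice property.
\item So $g\in\GWL(A,B)\subset\GWL(C,D)$, whence $\norm{g}_{D_{\glob}}<\infty$.
\end{enumerate}
It remains to pass from $g$ back to $f$, and the removed piece $f\chi_F$ lives on a set of finite, possibly large, measure. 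By Proposition~\ref{global_properties}(i), applied with the admissible local space $C$ and global space $D$, the quantity $\delta_Df$ is either finite at every point or infinite at every point. So it suffices to show $\delta_Df(t)<\infty$ for $t=\mu(F)+1$. That holds because
\begin{equation*}
\delta_Df(t)\le\norm{f\chi_{R\setminus(F\cup G)}}_D=\norm{g\chi_{R\setminus G}}_D
\end{equation*}
for any $G$ with $\mu(G)\le1$. Taking the infimum over such $G$ gives $\delta_Df(\mu(F)+1)\le\delta_Dg(1)<\infty$. Therefore $\norm{f}_{D_{\glob}}=\delta_Df(1)<\infty$, which completes the plan.
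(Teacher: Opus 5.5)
Your proposal is correct and takes essentially the paper's route. Sufficiency goes via Lemma~\ref{cut_to_bound} and Theorem~\ref{continuous_embeddings}, the local condition is tested on $f\chi_E$ with $\mu(E)=1$, and the global condition is obtained by cutting off the local blow-up and invoking Proposition~\ref{global_properties}. The only differences are cosmetic: you argue directly with the embedding constant rather than by contradiction, and you prove (iii) first and specialise, whereas the paper proves (i) and (ii) and chains them.
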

\begin{proof}
    In the first two cases, the sufficiency is simple because it follows directly from Theorem~\ref{continuous_embeddings} and Corollary~\ref{gwl_quasinormability}. In the second case, one only needs to realize that all functions in $\GWL(A,B)$ are globally bounded because of Lemma~\ref{cut_to_bound}. Sufficiency in the third case simply follows from the first two because
    \begin{equation*}
        \GWL(A,B)\hookrightarrow\GWL(A,D)\hookrightarrow\GWL(C,D).
    \end{equation*} \par
    To show the necessity in the first case, let $f\in \M(R,\mu)$ be such that
    \begin{equation*}
        \norm{f}_{A_{\loc}} < \infty \text{, but } \norm{f}_{C_{\loc}} = \infty.
    \end{equation*}
    For every $n\in\N$, we may find a set $E_n\subset R$, $\mu(E)=1$ such that 
    \begin{equation*}
        \norm{f\chi_{E_n}}_C>n\norm{f}_{A_{\loc}}.
    \end{equation*}
    For every $n\in\N$, let $f_n:=f\chi_{E_n}$. It is easy to see that $\norm{f_n}_{B_{\glob}}=0$. Hence,
    \begin{equation*}
        \norm{f_n}_{\GWL(C,B)}=\norm{f_n}_{C_{\loc}}>n\norm{f}_{A_{\loc}}=n\left(\norm{f}_{A_{\loc}}+\norm{f_n}_{B_{\glob}}\right)=n\norm{f_n}_{\GWL(A,B)}, 
    \end{equation*}
    and so $\GWL(A,B)\not\hookrightarrow\GWL(C,B)$. \par
    For necessity in the second case, again let $f\in \M(R,\mu)$ be globally bounded and satisfy 
    \begin{equation*}
        \norm{f}_{B_{\glob}} < \infty \text{, but } \norm{f}_{D_{\glob}} = \infty.
    \end{equation*}
    Let $E\subset R$, $\mu(E)<\infty$ be such that $\norm{f\chi_{R\setminus E}}_{L^\infty}<\infty$. Denote $f_0:=f\chi_{R\setminus E}$. Then we have
    \begin{equation*}
        \norm{f_0}_{A_{\loc}} \leq \norm{f\chi_{R\setminus E}}_{L^\infty} \varphi_A^{\max}(1)<\infty,
    \end{equation*}
    and because $\abs{f_0}\leq\abs{f}$,
    \begin{equation*}
        \norm{f_0}_{B_{\glob}}\leq\norm{f}_{B_{\glob}}<\infty.
    \end{equation*}
    It remains to show that
    \begin{equation*}
        \norm{f_0}_{D_{\glob}} = \infty.
    \end{equation*}
    By Proposition~\ref{global_properties}, we know that $\delta_Df(\mu(E)+1)=\infty$. Now, realize that if we take a set $H\subset R$, $\mu(H)\leq1$ and consider the set $(R\setminus E)\setminus H=R\setminus (E\cup H)$, we are removing a set of measure at most $\mu(E)+1$ from $R$; hence, 
    \begin{equation*}
        \infty = \delta_Df(\mu(E)+1)= \inf_{\substack{F\subset R \\ \mu(F)=\mu(E)+1}} \norm{f\chi_{R \setminus F}}_D\leq \inf_{\substack{H\subset R \\ \mu(H)=1}} \norm{f_0\chi_{R\setminus H}}_D=\norm{f_0}_{D_{\glob}},
    \end{equation*}
    which shows that $\GWL(A,B)\not\hookrightarrow \GWL(A,D)$. \par
    To show the necessity for the third case, we use the same procedures as in the previous cases. That is, if~\eqref{local_stronger} is false, we use the construction from $(i)$, and if~\eqref{global_stronger} is false, we use the construction from $(ii)$.
\end{proof}

\begin{defn}
    Let $\norm{\cdot}_A$, $\norm{\cdot}_B$, $\norm{\cdot}_C$, $\norm{\cdot}_D$ be quasi-Banach function norms and $A$, $B$, $C$, $D$ their corresponding quasi-Banach function spaces, and let $\norm{\cdot}_A,\norm{\cdot}_C$ be admissible. If the condition~\eqref{local_stronger} holds, we say that the local component of $\norm{\cdot}_A$ is stronger than the local component of $\norm{\cdot}_C$. Similarly, if~\eqref{global_stronger} holds, we say that the global component of $\norm{\cdot}_B$ is stronger than the global component of $\norm{\cdot}_D$.
\end{defn}

As remarked in the original paper~\cite{Amalgam}, this approach of looking at embeddings locally and globally may be used to compare Lorentz (and Lebesgue) spaces over infinite measure spaces

\begin{example}\label{lorentz_comparison}
    Let $p_1,p_2,q_1,q_2\in (0,\infty]$ such that $p_1\not = p_2$ and let $\norm{\cdot}_{L^{p_1,q_1}}$ and $\norm{\cdot}_{L^{p_2,q_2}}$ be the corresponding Lorentz functionals. Then
    \begin{enumerate}[label=(\roman*)]
        \item the local component of $\norm{\cdot}_{L^{p_1,q_1}}$ is stronger than the local component of $\norm{\cdot}_{L^{p_2,q_2}}$ if and only if $p_1>p_2$,
        \item the global component of $\norm{\cdot}_{L^{p_1,q_1}}$ is stronger than the global component of $\norm{\cdot}_{L^{p_2,q_2}}$ if and only if $p_1<p_2$.
    \end{enumerate}
\end{example}

The following theorem shows that the amalgam space lies somewhere in between the sum and intersection of spaces. Under certain conditions, it may also align with one of these extremes. The idea of this theorem is directly taken from~\cite{pesa2025sumsrearrangementinvariantquasibanachfunction} where it was proven for the original Wiener--Luxemburg amalgam spaces.

\begin{thm}\label{sums_intersections}
    Let $\norm{\cdot}_A$, $\norm{\cdot}_B$ be quasi-Banach function norms and $A$, $B$ their corresponding quasi-Banach function spaces, and let $\norm{\cdot}_A$ be admissible. Then
    \begin{equation*}
        A\cap B \hookrightarrow \GWL(A,B) \hookrightarrow A+B.
    \end{equation*}
    Moreover, if $\norm{\cdot}_B$ is admissible, we assume that the local component of $\norm{\cdot}_A$ is stronger than the local component of $\norm{\cdot}_B$, and the global component of $\norm{\cdot}_B$ is stronger than the global component of $\norm{\cdot}_A$, then
    \begin{equation*}
        \begin{aligned}
            \norm{\cdot}_{\GWL(A,B)}&\approx \norm{\cdot}_{A\cap B}, \\
            \norm{\cdot}_{\GWL(B,A)}&\approx \norm{\cdot}_{A+ B}.
        \end{aligned}
    \end{equation*}
\end{thm}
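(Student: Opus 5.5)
We prove the two embeddings directly from the definitions and then obtain the equivalences from Theorem~\ref{amalgam_embeddings}. By Corollary~\ref{gwl_quasinormability}, $\GWL(A,B)$ is a quasi-Banach function space. The spaces $A\cap B$ and $A+B$ are quasi-Banach function spaces as well; for the sum, (P3) may have to be obtained through the $\fat$ construction of Section~\ref{weak_fatou_sec}. Consequently, by Theorem~\ref{continuous_embeddings}, it will mostly suffice to prove set inclusions. We will nevertheless give direct estimates where they are cheap.

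For $A\cap B\hookrightarrow\GWL(A,B)$, note first that $\norm{f}_{A_{\loc}}\le\norm{f}_A$. Taking $E=\emptyset$ shows that $\norm{f}_{B_{\glob}}\le\norm{f}_B$. Hence $\norm{f}_{\GWL(A,B)}\le 2\norm{f}_{A\cap B}$.

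For $\GWL(A,B)\hookrightarrow A+B$, fix $f$ and $\varepsilon>0$, and choose $E$ with $\mu(E)\le1$ and $\norm{f\chi_{R\setminus E}}_B<\norm{f}_{B_{\glob}}+\varepsilon$. Then $f=f\chi_E+f\chi_{R\setminus E}$ is an admissible decomposition, and $\norm{f\chi_E}_A\le\norm{f}_{A_{\loc}}$. This gives $\norm{f}_{A+B}\le\norm{f}_{\GWL(A,B)}+\varepsilon$. Admissibility of $A$ is used only to ensure that $\GWL(A,B)$ is a genuine quasi-Banach function space.

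For the equivalences, assume that both spaces are admissible and that the stated dominance conditions hold. We first show $\GWL(A,B)\subset A\cap B$. Let $f\in\GWL(A,B)$. Applying (\ref{local_stronger}) with $C=B$ gives $\norm{f}_{B_{\loc}}<\infty$. By Lemma~\ref{cut_to_bound}, $f$ is globally bounded, so (\ref{global_stronger}) with $D=A$ gives $\norm{f}_{A_{\glob}}<\infty$. Therefore $f\in\GWL(A,A)\cap\GWL(B,B)$, which equals $A\cap B$ by Remark~\ref{equivalent_quasinorm}. Theorem~\ref{amalgam_embeddings}(iii) already packages this argument as $\GWL(A,B)\hookrightarrow\GWL(B,B)\cap\GWL(A,A)$. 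Combined with the first embedding and Theorem~\ref{continuous_embeddings}, this yields $\norm{\cdot}_{\GWL(A,B)}\approx\norm{\cdot}_{A\cap B}$.

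It remains to show $A+B\subset\GWL(B,A)$. The idea is that the local part of $A+B$ is controlled by $B$ and the global part by $A$. Write $f=g+h$ with $g\in A$ and $h\in B$. By Theorem~\ref{amalgam_embeddings}(iii) applied with the hypotheses, $A=\GWL(A,A)\hookrightarrow\GWL(B,A)$. Likewise $B=\GWL(B,B)\hookrightarrow\GWL(B,A)$: the local parts agree, and the global part is controlled by (\ref{global_stronger}), since functions in $\GWL(B,B)$ are globally bounded by Lemma~\ref{cut_to_bound} because $B$ is admissible. Since $\GWL(B,A)$ is a linear space (it is quasinormed by Theorem~\ref{gwl_quasinorm}, which uses the admissibility of $B$), $f=g+h\in\GWL(B,A)$. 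Continuity then follows from Theorem~\ref{continuous_embeddings}.

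The main obstacle is this last step. One must check that $A+B$ carries a quasi-Banach function norm, or at least a functional with the weak Fatou property, so that Theorem~\ref{continuous_embeddings} applies via $\fat(A+B)$. If that is awkward, I would instead argue quantitatively: the two embedding constants $\norm{I}_{A\to\GWL(B,A)}$ and $\norm{I}_{B\to\GWL(B,A)}$ together with the quasi-triangle inequality in $\GWL(B,A)$ give $\norm{f}_{\GWL(B,A)}\lesssim\norm{g}_A+\norm{h}_B$. Taking the infimum over all decompositions then yields the bound directly.
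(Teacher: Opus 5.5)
Your proposal is correct and follows the paper's proof essentially step for step. You use the same direct estimates for $A\cap B\hookrightarrow\GWL(A,B)\hookrightarrow A+B$, and you get the equivalence with $A\cap B$ from $\GWL(A,B)\hookrightarrow\GWL(A,A)$ and $\GWL(A,B)\hookrightarrow\GWL(B,B)$ via Theorem~\ref{amalgam_embeddings} and Remark~\ref{equivalent_quasinorm}. Your quantitative fallback for $A+B\hookrightarrow\GWL(B,A)$ (embedding constants, the quasi-triangle inequality, then the infimum over decompositions) is exactly the paper's argument, and it is the right choice because it avoids having to show that $A+B$ has the Fatou property, which the paper never establishes.
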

\begin{proof}
    For the first set of embeddings, let $f\in A\cap B$. Then
    \begin{equation*}
        \norm{f}_{\GWL(A,B)}=\norm{f}_{A_{\loc}} + \norm{f}_{B_{\glob}} \leq \norm{f}_A+ \norm{f}_B\leq 2 \norm{f}_{A\cap B}.
    \end{equation*}
    Now, let $f\in \GWL(A,B)$, $\varepsilon>0$, and let $E\subset R$, $\mu(E)=1$ be such that
    \begin{equation*}
        \norm{f\chi_{R\setminus E}}_B < \norm{f}_{B_{\glob}} + \varepsilon.
    \end{equation*}
 From this we have $f\chi_{R\setminus E}\in B$. We may also observe that $f\chi_E\in A$ because $f\in\GWL(A,B)$. Hence, 
 \begin{equation*}
     \norm{f}_{A+B}\leq \norm{f\chi_E}_A + \norm{f\chi_{R\setminus E}}_B < \norm{f}_{A_{\loc}} + \norm{f}_{B_{\glob}} + \varepsilon = \norm{f}_{\GWL(A,B)} + \varepsilon,
 \end{equation*}
 which yields the other embedding. \par
 To prove the second statement, it remains to show
 \begin{equation*}
     \begin{aligned}
         \GWL(A,B) &\hookrightarrow A \cap B, \\
         A+B &\hookrightarrow \GWL(B,A).
     \end{aligned}
 \end{equation*}
 Our assumptions combined with Remark~\ref{equivalent_quasinorm} and Theorem~\ref{amalgam_embeddings} immediately give 
 \begin{equation*}
     \begin{aligned}
         \GWL(A,B) &\hookrightarrow A, \\
         \GWL(A,B) &\hookrightarrow B,
     \end{aligned}
 \end{equation*}
 which produces the first embedding. On the other hand, we also obtain
 \begin{equation*}
     \begin{aligned}
         A &\hookrightarrow \GWL(B,A), \\
         B &\hookrightarrow \GWL(B,A).
     \end{aligned}
 \end{equation*}
 Hence, there exists a constant $C>0$ such that for every $f=f_A+f_B$, $f_A\in A$, $f_B\in B$ it holds
 \begin{equation*}
     \begin{aligned}
         \norm{f}_{\GWL(B,A)} &\leq C_{\GWL(B,A)} \left(\norm{f_A}_{\GWL(B,A)} + \norm{f_B}_{\GWL(B,A)}\right) \\&\leq C C_{\GWL(B,A)} \left(\norm{f_A}_{A} + \norm{f_B}_{B}\right).
     \end{aligned}
 \end{equation*}
 The second embedding now follows from taking the infimum over such decompositions on the right-hand side.
\end{proof}

In the next theorem, we study extremal local and global embeddings into $L^1$ and $L^\infty$. 

\begin{thm}\label{extremal_embeddings}
    Let $\norm{\cdot}_A$, $\norm{\cdot}_B$ be quasi-Banach function norms and $A$, $B$ their corresponding quasi-Banach function spaces, and let $\norm{\cdot}_A$ be admissible. Then
    \begin{enumerate}[label=(\roman*)]
        \item $\GWL(L^\infty,B) \hookrightarrow \GWL(A,B)$,
        \item $\GWL(A,B) \hookrightarrow \GWL(A,L^\infty)$.
        \item If $\norm{\cdot}_A$ has the property (P5), then $\GWL(A,B) \hookrightarrow \GWL(L^1,B)$.
        \item If $\norm{\cdot}_B$ is a Banach function norm, then $\GWL(A,L^1) \hookrightarrow \GWL(A,B)$.
    \end{enumerate}
\end{thm}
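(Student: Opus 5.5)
All four statements will be reduced to Theorem~\ref{amalgam_embeddings}. By that theorem it suffices to compare only the local components in (i) and (iii), and only the global components of globally bounded functions in (ii) and (iv). Note that $L^\infty$ and $L^1$ are admissible Banach function norms on a nonatomic space, since $\varphi^{\min}_{L^\infty}(t)=1$ and $\varphi^{\min}_{L^1}(t)=t$. So the admissibility hypotheses of Theorem~\ref{amalgam_embeddings} hold in every case.

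For (i), take $f$ with $\norm{f}_{L^\infty_{\loc}}<\infty$. Since $\sup_{\mu(E)\le1}\norm{f\chi_E}_{L^\infty}=\norm{f}_{L^\infty}$, the function $f$ is bounded. Then for every $E$ with $\mu(E)\le1$ we have $\norm{f\chi_E}_A\le\norm{f}_{L^\infty}\varphi_A^{\max}(1)$. The right-hand side is finite by Theorem~\ref{lenka4.4}, hence $\norm{f}_{A_{\loc}}<\infty$.

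For (iii), use (P5) of $\norm{\cdot}_A$: for $\mu(E)\le1$ we get $\int_E\abs f\,d\mu\le C_E\norm{f\chi_E}_A$. The constant $C_E$ depends on $E$, so instead of bounding it uniformly I argue qualitatively. Suppose $\norm{f}_{A_{\loc}}<\infty$ but $\sup_{\mu(E)\le1}\int_E\abs f=\infty$. Lemma~\ref{cut_to_bound} shows that $f$ is globally bounded, so outside a set $F$ of finite measure, sets of measure $1$ carry integral at most $\norm{f\chi_{R\setminus F}}_{L^\infty}$. On $F$, which has finite measure, (P5) gives $\int_F\abs f\le C_F\norm{f\chi_F}_A$. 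Proposition~\ref{local_estimate} gives $\norm{f\chi_F}_A\le\sigma_Af(\mu(F))<\infty$. Hence $\norm{f}_{L^1_{\loc}}<\infty$, and condition~\eqref{local_stronger} holds.

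For (ii), let $f$ be globally bounded with $\norm{f}_{B_{\glob}}<\infty$. By definition there is a set $E$ of finite measure with $\norm{f\chi_{R\setminus E}}_{L^\infty}<\infty$. Removing $E$ gives $\delta_{L^\infty}f(\mu(E))<\infty$. Proposition~\ref{global_properties}(i), applied with the admissible local space $A$, then transfers finiteness to $t=1$, so $\norm{f}_{L^\infty_{\glob}}<\infty$.

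For (iv), let $f$ be globally bounded with $\int_{R\setminus E}\abs f<\infty$ for some $\mu(E)\le1$. Take a finite-measure set $F$ outside which $\abs f\le M$. The function $g=f\chi_{R\setminus(E\cup F)}$ is bounded by $M$ and lies in $L^1$. I then need $\norm{g}_B<\infty$, and this is the main obstacle: bounded integrable functions need not belong to a general $B$. For this I use that $B$ is a Banach function norm. Then $g\in L^1\cap L^\infty$, and I want $L^1\cap L^\infty\hookrightarrow B$. By Theorem~\ref{second_dual} we have $\norm{g}_B=\norm{g}_{B''}=\sup_{\norm h_{B'}\le1}\int\abs{gh}$, so it suffices to show $B'\hookrightarrow L^1+L^\infty$. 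Given $h\in B'$, split $R$ into sets $G_k$ of measure one. Proposition~\ref{p5admissible} gives $\varphi^{\min}_{B'}(1)>0$, so \[ \int_{G_k}\abs h\,d\mu\le\frac{\norm{\chi_{G_k}}_{B}}{1}\,\norm{h}_{B'}\le\varphi_B^{\max}(1)\norm h_{B'}. \] Hence $h\in(L^1\cap L^\infty)'=L^1+L^\infty$ with a uniform bound, and therefore $\norm{g}_B\lesssim\norm{g}_{L^1\cap L^\infty}<\infty$. This yields $\delta_Bf(\mu(E\cup F))<\infty$, and Proposition~\ref{global_properties}(i) gives $\norm{f}_{B_{\glob}}<\infty$. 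I expect this step, which uses normability through the second associate, to be the delicate point. It explains why the Banach-norm hypothesis is needed.
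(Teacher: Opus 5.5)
Your parts (i) and (ii) are essentially the paper's arguments, and (iii) is correct. The key step of (iv), however, fails as written. A bound $\sup_k\int_{G_k}\abs{h}\,d\mu<\infty$ over a \emph{fixed} partition of $R$ into sets of measure one does not place $h$ in $L^1+L^\infty$. The reason is that $\norm{h}_{L^1+L^\infty}=\int_0^1 h^*\,d\lambda=\sup_{\mu(E)\le 1}\int_E\abs{h}\,d\mu$ tests unit-measure sets that may be spread over infinitely many cells. For instance, take $R=[0,\infty)$ with $G_k=[k-1,k)$ and $h=\sum_{k\ge1}k^2\chi_{[k-1,k-1+k^{-2})}$. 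Then $\int_{G_k}h\,d\lambda=1$ for all $k$. On the other hand, $E=\bigcup_{k\ge2}[k-1,k-1+k^{-2})$ satisfies $\lambda(E)=\pi^2/6-1<1$ and $\int_E h\,d\lambda=\infty$. Since $\chi_E\in L^1\cap L^\infty$, the partition bound alone cannot give $\int\abs{gh}\,d\mu\lesssim\norm{g}_{L^1\cap L^\infty}$.

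The repair is immediate, because your H\"older estimate never used the partition. Theorem~\ref{holder} gives $\int_E\abs{h}\,d\mu\le\norm{\chi_E}_B\norm{h}_{B'}\le\varphi_B^{\max}(1)\norm{h}_{B'}$ for \emph{every} $E$ with $\mu(E)\le1$. Hence $\norm{h}_{L^1+L^\infty}\le\varphi_B^{\max}(1)\norm{h}_{B'}$. Combined with $\int_R\abs{gh}\,d\mu\le\norm{g}_{L^1\cap L^\infty}\norm{h}_{L^1+L^\infty}$ and $\norm{\cdot}_B=\norm{\cdot}_{B''}$, this yields $\norm{g}_B\le\varphi_B^{\max}(1)\norm{g}_{L^1\cap L^\infty}$. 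The appeal to $\varphi^{\min}_{B'}(1)>0$ plays no role. With this change the rest of your (iv) goes through.

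Once repaired, your (iii) and (iv) take different routes from the paper.

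In (iii), the paper removes your concern that $C_E$ depends on $E$ with exactly the H\"older trick you use in (iv). Since $A$ has (P5), $\norm{\cdot}_{A'}$ is a Banach function norm by Theorem~\ref{second_dual}. So $\int_E\abs{f}\,d\mu\le\norm{\chi_E}_{A'}\norm{f\chi_E}_A\le\varphi_{A'}^{\max}(1)\norm{f}_{A_{\loc}}$, where $\varphi_{A'}^{\max}(1)<\infty$ by Theorem~\ref{lenka4.4}. This is an explicit uniform constant. Your argument via Lemma~\ref{cut_to_bound}, a single (P5) constant $C_F$ and Proposition~\ref{local_estimate} is valid but only qualitative, so continuity has to come from Theorem~\ref{amalgam_embeddings}.

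In (iv), the paper dualizes part (ii). It applies (ii) to $B'$ and uses Theorem~\ref{duality} to get $\GWL(B,L^1)=(\GWL(B',L^\infty))'\hookrightarrow B''=B$. It then reads the global condition off the necessity part of Theorem~\ref{amalgam_embeddings}. Your route bypasses Theorem~\ref{duality} and uses only $B=B''$ and H\"older. It proves $L^1\cap L^\infty\hookrightarrow B$ directly, which is the second half of Corollary~\ref{space_sandwich}; the paper instead derives that corollary from this theorem. Both arguments use the normability of $B$ exactly through $B=B''$.
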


\begin{proof}
    Let $f\in \M(R,\mu)$. We obtain the first statement from the inequality
    \begin{equation*}
        \norm{f\chi_{E}}_{A}\leq \norm{f\chi_E}_{L^\infty} \norm{\chi_E}_A \leq \norm{f}_{L^\infty_{\loc}} \varphi_A^{\max}(1),
    \end{equation*}
    for any $E\subset R$, $\mu(E)\leq 1$. \par
    
    The second statement follows from the fact that the condition~\eqref{global_stronger} in Theorem~\ref{amalgam_embeddings} with $\norm{\cdot}_D=\norm{\cdot}_{L^\infty}$ is met automatically for any globally bounded function due to Proposition~\ref{global_properties}. \par
    
    Due to the property (P5) of $\norm{\cdot}_A$, Theorem~\ref{second_dual} gives that $\norm{\cdot}_{A'}$ is a Banach function norm. Hence, using  Theorem~\ref{holder} we obtain the inequality
    \begin{equation*}
        \int_E \abs{f} \, d\mu \leq \norm{\chi_E}_{A'} \norm{f\chi_E}_A \leq \varphi_{A'}^{\max}(1) \norm{f}_{A_{\loc}},
    \end{equation*}
    for any $E\subset R$, $\mu(E)\leq 1$, which proves the third statement. \par
    
      Finally, for the fourth statement we observe that by Remark~\ref{equivalent_quasinorm} and the second statement we have $B'\hookrightarrow \GWL(B',L^\infty)$. From this it easily follows that $(\GWL(B',L^\infty))'\hookrightarrow B''$. Thus, Theorems~\ref{second_dual} and ~ \ref{duality} yield
    \begin{equation*}
        \GWL(B,L^1)=\left(\GWL(B',L^\infty)\right)'\hookrightarrow B''=B.
    \end{equation*}
    Hence, Theorem~\ref{amalgam_embeddings} asserts that~\eqref{global_stronger} is true for $L^1$ and $B$, which again by Theorem~\ref{amalgam_embeddings} shows the desired embedding.
\end{proof}

\begin{rem}\label{linfty_l1_combinations}
    From Theorems~\ref{amalgam_embeddings} and~\ref{extremal_embeddings} it easily follows that the local component of $L^\infty$ is stronger than the local component of $L^1$ and that the global component of $L^1$ is stronger than the global component of $L^\infty$. Hence, Theorem~\ref{sums_intersections} shows that
        \begin{equation*}
        \begin{aligned}
            \norm{\cdot}_{\GWL(L^\infty,L^1)}&\approx \norm{\cdot}_{L^1\cap L^\infty}, \\
            \norm{\cdot}_{\GWL(L^1,L^\infty)}&\approx \norm{\cdot}_{L^1+ L^\infty}.
        \end{aligned}
    \end{equation*}
\end{rem}

We may use these results to substantially generalize the classical result that all r.i.~Banach function spaces lie in between $L^1\cap L^\infty$ and $L^1+L^\infty$, see~\cite[Chapter 2, Theorem 6.6]{BennetSharpley}. In particular, we completely remove the need for rearrangement-invariance.

\begin{cor}\label{space_sandwich}
    Let $\norm{\cdot}_X$ be a quasi-Banach function norm with the property (P5) and $X$ its corresponding quasi-Banach function space. Then 
    \begin{equation*}
        X \hookrightarrow L^1 + L^\infty.
    \end{equation*}
    Furthermore, if $\norm{\cdot}_X$ is a Banach function norm, then
    \begin{equation*}
        L^1 \cap L^\infty \hookrightarrow X.
    \end{equation*}
\end{cor}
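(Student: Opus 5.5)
The plan is to sandwich $X$ between amalgams using Remark~\ref{equivalent_quasinorm}, Theorem~\ref{extremal_embeddings} and Remark~\ref{linfty_l1_combinations}.

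\textbf{First embedding.} Assume $\norm{\cdot}_X$ has (P5). Then $\norm{\cdot}_X$ is admissible by Proposition~\ref{p5admissible}. By Remark~\ref{equivalent_quasinorm}, $X = \GWL(X,X)$ with equivalent quasinorms.
\begin{itemize}
\item Part (iii) of Theorem~\ref{extremal_embeddings}, applied with $A=B=X$, gives $\GWL(X,X)\hookrightarrow \GWL(L^1,X)$.
\item Part (ii), applied with $A=L^1$ and $B=X$, gives $\GWL(L^1,X)\hookrightarrow \GWL(L^1,L^\infty)$. This step needs $L^1$ to be admissible. That holds since $\norm{\chi_E}_{L^1}=\mu(E)$, so $\varphi^{\min}_{L^1}(t)=t>0$.
\item Remark~\ref{linfty_l1_combinations} identifies $\GWL(L^1,L^\infty)$ with $L^1+L^\infty$.
\end{itemize}
Chaining these yields $X\hookrightarrow L^1+L^\infty$. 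Each link is a continuous embedding between quasinormed spaces, so the composition is one too. Alternatively, once the inclusions are established, Theorem~\ref{continuous_embeddings} and Corollary~\ref{gwl_quasinormability} upgrade inclusion to continuity.

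\textbf{Second embedding.} Assume $\norm{\cdot}_X$ is a Banach function norm. It then has (P5), hence is admissible.
\begin{itemize}
\item Remark~\ref{linfty_l1_combinations} gives $L^1\cap L^\infty = \GWL(L^\infty,L^1)$. This needs $L^\infty$ to be admissible, which holds since $\varphi^{\min}_{L^\infty}(t)=1$.
\item Part (i) of Theorem~\ref{extremal_embeddings}, applied with $A=X$ and $B=L^1$, gives $\GWL(L^\infty,L^1)\hookrightarrow \GWL(X,L^1)$.
\item Part (iv), applied with $A=X$ and $B=X$, gives $\GWL(X,L^1)\hookrightarrow \GWL(X,X)$. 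This uses that $\norm{\cdot}_X$ is a Banach function norm.
\item Remark~\ref{equivalent_quasinorm} gives $\GWL(X,X)=X$.
\end{itemize}
Chaining these yields $L^1\cap L^\infty\hookrightarrow X$.

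\textbf{Main obstacle.} The only real care needed is checking the admissibility hypotheses at each application of Theorem~\ref{extremal_embeddings}. In each case the first argument of $\GWL$ must be admissible: $X$, $L^1$ or $L^\infty$ as appropriate. I also need to confirm that $L^1$ and $L^\infty$ are legitimate quasi-Banach function norms on the nonatomic infinite space, which is standard. Everything else is direct composition of previously established embeddings.
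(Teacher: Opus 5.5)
Your proposal is correct and follows the paper's proof. The paper likewise combines Proposition~\ref{p5admissible}, Remark~\ref{linfty_l1_combinations}, Theorem~\ref{extremal_embeddings} and Remark~\ref{equivalent_quasinorm}, and your chains $X=\GWL(X,X)\hookrightarrow\GWL(L^1,X)\hookrightarrow\GWL(L^1,L^\infty)$ and $\GWL(L^\infty,L^1)\hookrightarrow\GWL(X,L^1)\hookrightarrow\GWL(X,X)$, together with the admissibility checks for $L^1$ and $L^\infty$, simply make explicit what the paper leaves implicit.
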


\begin{proof}
    By Proposition~\ref{p5admissible}, $\norm{\cdot}_X$ is admissible. Due to Remark~\ref{linfty_l1_combinations}, we have $\GWL(L^1,L^\infty)=L^1+L^\infty$ and $\GWL(L^\infty,L^1)=L^1\cap L^\infty$. The result now follows directly from Theorem~\ref{extremal_embeddings} and Remark~\ref{equivalent_quasinorm}.
\end{proof}

We would like to investigate the optimality of the assumptions in Corollary~\ref{space_sandwich}. It turns out that the conditions for the $L^1+L^\infty$ embedding are indeed sharp.

\begin{prop}
    Let $\norm{\cdot}_X$ be a quasi-Banach function norm and $X$ its corresponding quasi-Banach function space. Then $\norm{\cdot}_X$ has the property (P5) if and only if
    \begin{equation*}
        X\hookrightarrow L^1 + L^\infty.
    \end{equation*}
\end{prop}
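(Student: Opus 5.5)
The plan is to prove the two implications separately. The sufficiency of (P5) is already contained in Corollary~\ref{space_sandwich}, so only necessity needs an argument: assume $X\hookrightarrow L^1+L^\infty$ and deduce (P5) for $\norm{\cdot}_X$.

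\textbf{Step 1 (a bound on $\norm{\cdot}_{L^1+L^\infty}$).} Fix $E\subset R$ with $\mu(E)<\infty$ and $f\in\M_+(R,\mu)$. We may assume $\norm{f}_X<\infty$; otherwise (P5) is trivial for $f$. By Theorem~\ref{continuous_embeddings} there is a constant $K$ with $\norm{g}_{L^1+L^\infty}\le K\norm{g}_X$ for all $g$. The aim is an estimate
\begin{equation*}
    \int_E \abs{g}\,d\mu\le \max\{1,\mu(E)\}\,\norm{g}_{L^1+L^\infty}.
\end{equation*}
To get it, take any decomposition $g=g_1+g_2$ with $g_1\in L^1$ and $g_2\in L^\infty$. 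Then
\begin{equation*}
    \int_E\abs{g}\,d\mu\le\int_E\abs{g_1}\,d\mu+\int_E\abs{g_2}\,d\mu\le\norm{g_1}_{L^1}+\mu(E)\norm{g_2}_{L^\infty}.
\end{equation*}
The right-hand side is at most $\max\{1,\mu(E)\}\bigl(\norm{g_1}_{L^1}+\norm{g_2}_{L^\infty}\bigr)$. Taking the infimum over all such decompositions gives the displayed bound.

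\textbf{Step 2 (conclusion).} Apply Step 1 with $g=f$ and combine it with the embedding constant:
\begin{equation*}
    \int_E f\,d\mu\le\max\{1,\mu(E)\}K\norm{f}_X.
\end{equation*}
This is exactly (P5) with $C_E=K\max\{1,\mu(E)\}$.

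\textbf{Where care is needed.} There is no real obstacle; the only point requiring attention is the use of Theorem~\ref{continuous_embeddings}. The hypothesis $X\hookrightarrow L^1+L^\infty$ already means continuity of the identity between quasinormed spaces, which gives a bounded operator constant $K$ directly. Alternatively, if one only assumes the inclusion $X\subset L^1+L^\infty$, Theorem~\ref{continuous_embeddings} upgrades it to a continuous embedding. For that, note that $L^1+L^\infty$ is a Banach function space. Its norm satisfies the Fatou property: by Remark~\ref{linfty_l1_combinations} and Corollary~\ref{gwl_quasinormability} it is equivalent to a quasi-Banach function norm, or one can use the classical fact directly. Continuity for equivalent quasinorms is unaffected, so either way we obtain the constant $K$.
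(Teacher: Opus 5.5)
Your proof is correct, and for the converse direction it takes a more elementary route than the paper. The direction ``(P5) $\Rightarrow$ $X\hookrightarrow L^1+L^\infty$'' is handled as in the paper, via Corollary~\ref{space_sandwich}. The paper calls this direction ``necessity'' and you call it ``sufficiency'', but the logical content is the same.

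For the other direction the two arguments differ:
\begin{itemize}
\item The paper first uses the classical identity $\norm{f}_{L^1+L^\infty}=\int_0^1 f^*\,d\lambda$ from Bennett--Sharpley. It then bounds $\int_E f\,d\mu\le\int_0^{\mu(E)}f^*\,d\lambda$ by the Hardy--Littlewood inequality. For $\mu(E)>1$ it also uses that the averages $t^{-1}\int_0^t f^*\,d\lambda$ are nonincreasing.
\item You work directly from the definition of the sum quasinorm. Splitting an arbitrary decomposition $g=g_1+g_2$ gives $\int_E\abs{g}\,d\mu\le\max\{1,\mu(E)\}\norm{g}_{L^1+L^\infty}$ in one line.
\end{itemize}
Both arguments yield the same constant $C_E=K\max\{1,\mu(E)\}$. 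The paper's version leans on two standard rearrangement results. Yours is self-contained and uses no rearrangements at all, which fits the paper's stated goal of avoiding them.

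Your closing paragraph about Theorem~\ref{continuous_embeddings} is unnecessary. The hypothesis is already a continuous embedding, and for a linear map between quasinormed spaces continuity is equivalent to boundedness, so the constant $K$ comes for free.
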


\begin{proof}
    The necessity is given by Corollary~\ref{space_sandwich}. For the sufficiency, let $f\in \M_+(R,\mu)$ and $E\subset R$, $\mu(E)<\infty$. By our assumption, there exists a constant $C>0$ independent of $f$ such that
    \begin{equation}\label{l1linfty_embed}
        \int_0^1 f^* \, d\lambda =\norm{f}_{L^1+L^\infty}\leq C \norm{f}_X,
    \end{equation}
    where the first equality can be found in~\cite[Chapter 2, Theorem 6.2]{BennetSharpley}. First, we consider $\mu(E)\leq1$. Then the Hardy--Littlewood inequality (see~\cite[Chapter 2, Theorem 2.2]{BennetSharpley}) combined with~\eqref{l1linfty_embed} immediately give
    \begin{equation*}
        \int_E f \, d\mu \leq \int_0^{\mu(E)} f^* \, d\lambda \leq \int_0^1 f^* \, d\lambda \leq C \norm{f}_X.
    \end{equation*}
    Next, if $\mu(E)>1$, we use the fact that the integral mean of a decreasing function is a decreasing function. That is,
    \begin{equation*}
        \frac{1}{\mu(E)}\int_0^{\mu(E)} f^* \, d\lambda \leq \int_0^1 f^* \, d\lambda.
    \end{equation*}
    We again combine this with the Hardy--Littlewood inequality and~\eqref{l1linfty_embed} to obtain
    \begin{equation*}
        \int_E f \, d\mu \leq \int_0^{\mu(E)} f^* \, d\lambda \leq \mu(E) \int_0^1 f^* \, d\lambda \leq \mu(E) C \norm{f}_X.
    \end{equation*}
\end{proof}

In the case of the $L^1\cap L^\infty$ embedding, the condition is not sharp, as we show in the following example.
\begin{example}
    We will utilize amalgams to find an example. First, consider the nonnormable space $L^{2,\frac{1}{2}}$. Then by Proposition~\ref{amalgam_embeddings} and Example~\ref{lorentz_comparison} we have
    \begin{equation*}
        L^1\cap L^\infty=\GWL(L^\infty,L^1) \hookrightarrow \GWL(L^{2,\frac{1}{2}}, L^{2,\frac{1}{2}})=L^{2,\frac{1}{2}}.
    \end{equation*}
    So, the embedding may hold even if our space is not a Banach function space.
\end{example}

\subsection{Absolute continuity of the norm}

In the final section, we turn our attention to absolute continuity of the norm in Wiener--Luxemburg amalgam spaces. The main tool that we will use to prove our characterization in Theorem~\ref{amalgam_ac} is the following generalization of Theorem~\ref{ri_ac_char} to non-r.i.~spaces. This theorem is also featured in the closely related article~\cite{pesa2026amalgamapproachcompactnessquasibanach}.

\begin{thm}\label{ac_char}
Let $\norm{\cdot}_X$ be a quasi-Banach function norm and let $X$ be the corresponding quasi-Banach function space. Let $f\in X$. Then $f\in X_a$ if and only if the following conditions hold:
\begin{equation}\label{local}
    \lim_{n\rightarrow\infty} \sigma_Xf\left(n^{-1}\right) = 0,
\end{equation}
and
\begin{equation}\label{global}
    \lim_{n\rightarrow\infty} \delta_Xf(n) = 0.
\end{equation}
\end{thm}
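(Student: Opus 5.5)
The proof splits into necessity and sufficiency. Throughout we use that $\sigma_Xf$ is nondecreasing and $\delta_Xf$ is nonincreasing in $t$, which is immediate from Definition~\ref{maximal_minimal_norm}. Consequently both limits in \eqref{local} and \eqref{global} exist, and it suffices to show they vanish along suitable subsequences. The main point needing care is necessity of \eqref{local}: the sets realizing the supremum in $\sigma_Xf(1/n)$ need not converge to $0$ pointwise, so we replace them by a decreasing family of tails.

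\emph{Necessity of \eqref{local}.} Let $f\in X_a$ and suppose \eqref{local} fails. Then there is $\varepsilon>0$ with $\sigma_Xf(t)>\varepsilon$ for all $t>0$. For each $k$ we pick $G_k\subset R$ with $\mu(G_k)\le 2^{-k}$ and $\norm{f\chi_{G_k}}_X>\varepsilon$. Put
\begin{equation*}
    E_k:=\bigcup_{j\ge k}G_j .
\end{equation*}
Then $E_k$ is decreasing and $\mu(E_k)\le 2^{1-k}$, so $\mu\bigl(\bigcap_k E_k\bigr)=0$ and hence $\chi_{E_k}\to0$ a.e. By (P2), $\norm{f\chi_{E_k}}_X\ge\norm{f\chi_{G_k}}_X>\varepsilon$ for every $k$, which contradicts $f\in X_a$.

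\emph{Necessity of \eqref{global}.} By $\sigma$-finiteness there are sets $R_k\nearrow R$ with $\mu(R_k)<\infty$. Then $\chi_{R\setminus R_k}\to0$ a.e., so $\norm{f\chi_{R\setminus R_k}}_X\to0$ since $f\in X_a$. Given $\varepsilon>0$, fix $k$ with $\norm{f\chi_{R\setminus R_k}}_X<\varepsilon$. For every $n\ge\mu(R_k)$ the monotonicity of $\delta_Xf$ gives
\begin{equation*}
    \delta_Xf(n)\le\delta_Xf(\mu(R_k))\le\norm{f\chi_{R\setminus R_k}}_X<\varepsilon .
\end{equation*}

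\emph{Sufficiency.} Assume \eqref{local} and \eqref{global}, let $\chi_{E_n}\to0$ a.e., and let $\varepsilon>0$. By \eqref{global} we choose $m$ and a set $G$ with $\mu(G)\le m$ such that $\norm{f\chi_{R\setminus G}}_X<\varepsilon$. By \eqref{local} we choose $k$ with $\sigma_Xf(1/k)<\varepsilon$.

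Since $\mu(G)<\infty$ and $\chi_{E_n\cap G}\to0$ a.e., dominated convergence gives $\mu(E_n\cap G)\to0$. Hence $\mu(E_n\cap G)\le 1/k$ for all large $n$. For such $n$, the pointwise bound $|f|\chi_{E_n}\le |f|\chi_{E_n\cap G}+|f|\chi_{R\setminus G}$, (P2) and (Q1) give
\begin{equation*}
    \norm{f\chi_{E_n}}_X\le C_X\bigl(\norm{f\chi_{E_n\cap G}}_X+\norm{f\chi_{R\setminus G}}_X\bigr)\le C_X\bigl(\sigma_Xf(1/k)+\varepsilon\bigr)<2C_X\varepsilon .
\end{equation*}
Since $\varepsilon>0$ was arbitrary, $\norm{f\chi_{E_n}}_X\to0$, so $f\in X_a$.
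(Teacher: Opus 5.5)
Your proof is correct and follows essentially the same route as the paper. Necessity of \eqref{global} uses an exhausting sequence $R_k\nearrow R$, and sufficiency splits $E_n$ into $E_n\cap G$ and $R\setminus G$ and uses dominated convergence, as in the paper. The only difference is in the necessity of \eqref{local}: the paper takes near-extremal sets of measure at most $n^{-1}$ and extracts a subsequence whose characteristic functions tend to $0$ a.e., whereas you build the decreasing tails $\bigcup_{j\ge k}G_j$ with summable measures and use (P2), which makes the Borel--Cantelli argument behind that subsequence explicit.
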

\begin{proof}
We start with the necessity for~\eqref{local}. Let $\varepsilon>0$. 
For every $n\in\N$, we find a set $E_n\subset R$ such that $\mu(E_n)\leq\frac{1}{n}$ and 
\begin{equation*}
    \sigma_Xf\left(n^{-1}\right)  -\varepsilon < \norm{f \chi_{E_n}}_X.
\end{equation*}
We have $\mu(E_n)\rightarrow 0$. Hence, there exists a subsequence $E_{n_{k}}$ such that $\chi_{E_{n_{k}}}\rightarrow 0$ a.e. Because $f\in X_a$, we get that there exists $k_0\in\N$ such that for every $k\in\N, k\geq k_0$,
\begin{equation*}
    \norm{f \chi_{E_{n_{k}}}}_X < \varepsilon.
\end{equation*}
Thus, for every $k\in\N, k\geq k_0$, we get
\begin{equation*}
    \sigma_Xf\left(n_k^{-1}\right)< \norm{f \chi_{E_{n_k}}}_X + \varepsilon < 2 \varepsilon,
\end{equation*}
which is enough due to the monotonicity of our sequence. \par 
For the necessity of~\eqref{global}, we use the $\sigma$-finiteness of $(R,\mu)$ to find a sequence of sets $E_n\subset R$ such that $E_n \nearrow R$ and $\mu(E_n)<n$, for every $n\in\N$. Then $\chi_{R\setminus E_n}\rightarrow 0$ a.e. Let $\varepsilon >0$. Because $f\in X_a$, we find $n_0\in\N$ such that for every $n\in\N, n\geq n_0$ we have
\begin{equation*}
    \norm{f \chi_{R\setminus E_n}}_X<\varepsilon.
\end{equation*}
We obtain that for every $n\in\N, n\geq n_0$
\begin{equation*}
    \delta_Xf(n) \leq \norm{f \chi_{R\setminus E_n}}_X < \varepsilon.
\end{equation*} 
\par
For sufficiency, let $\varepsilon>0$ and let $\chi_{E_n}\rightarrow 0$ a.e.~for some $E_n\subset R$. By~\eqref{global}, 
we find $n_0\in\N$ such that 
\begin{equation*}
    \delta_Xf(n_0) < \varepsilon.
\end{equation*}
Hence, there exists a set $E_0$ such that $\mu(E_0)<n_0$ and 
\begin{equation}\label{E0}
     \norm{f \chi_{R\setminus E_0}}_X<\varepsilon.
\end{equation}
We denote $\tilde{E}_n = E_n \cap E_0$ and note that $\mu(\tilde{E}_n) \rightarrow 0$ by the dominated convergence theorem for the Lebesgue integral. Next, by~\eqref{local}, we find $n_1\in\N$  
such that for every set $E\subset R$, $\mu(E)< \frac{1}{n_1}$, we have 
\begin{equation*}
     \norm{f \chi_{E}}_X<\varepsilon.
\end{equation*}
Because $\mu(\tilde{E}_n) \rightarrow 0$, we find $n_2\in\N$ such that for every $n\in\N, n\geq n_2$ we have $\mu(\tilde{E}_n) < \frac{1}{n_1}$ and by 
the previous equation
\begin{equation}\label{EK}
    \norm{f \chi_{\tilde{E}_n}}_X<\varepsilon,
\end{equation}
for every $n\in \N$, $n\geq n_2$. Using~\eqref{E0} and~\eqref{EK}, we obtain that for every $n\in\N, n\geq n_2$, 
\begin{equation*}
    \norm{f \chi_{E_n}}_X \leq C_X\left(\norm{f\chi_{\tilde{E}_k}}_X+\norm{f\chi_{R\setminus E_0}}_X\right) < 2 C_X \varepsilon.
\end{equation*}
\end{proof}

The next result that we need is the following statement. Essentially, if a function decays globally in an admissible space $X$, it has to decay globally uniformly pointwise.

\begin{prop}\label{linfinity_delta}
    Let $\norm{\cdot}_X$ be an admissible quasi-Banach function norm and $X$ its corresponding quasi-Banach function space. Let $f\in\M(R,\mu)$. If
    \begin{equation*}
        \lim_{n\to \infty}\delta_X f(n)=0,
    \end{equation*}
    then
    \begin{equation*}
        \lim_{n\to \infty}\delta_{L^\infty} f(n)=0.
    \end{equation*}
\end{prop}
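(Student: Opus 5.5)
We argue by contraposition. Suppose $\delta_{L^\infty} f(n)\not\to 0$. Since $\delta_{L^\infty}f$ is nonincreasing in $n$, there is $\alpha>0$ with $\delta_{L^\infty}f(n)\geq \alpha$ for all $n\in\N$. We will show that $\delta_X f(n)\geq c>0$ for all $n$, with $c$ depending only on $\alpha$ and on $\varphi^{\min}_X(1)$; this contradicts the hypothesis. Along the way we may replace $f$ by $\abs{f}$.

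The first step is to turn the $L^\infty$ information into a set of infinite measure. Put $G:=\{x\in R: \abs{f(x)}>\alpha/2\}$. We claim that $\mu(G)=\infty$. If instead $\mu(G)<\infty$, choose $n\geq \mu(G)$. Then
\begin{equation*}
    \delta_{L^\infty}f(n)\leq \norm{f\chi_{R\setminus G}}_{L^\infty}\leq \alpha/2<\alpha,
\end{equation*}
which is a contradiction. (Strictly, $\delta$ is an infimum over sets of measure at most $n$, and $G$ is such a set, so this is fine.)

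The second step fixes $n$ and an arbitrary $E\subset R$ with $\mu(E)\leq n$. Since $\mu(G\setminus E)=\infty$ and the measure space is nonatomic, the Sierpi\'nski theorem gives a set $H\subset G\setminus E$ with $\mu(H)=1$. By the lattice property (P2), using $\abs{f}\geq \alpha/2$ on $H$ and $H\subset R\setminus E$, we get
\begin{equation*}
    \norm{f\chi_{R\setminus E}}_X\geq \norm{f\chi_H}_X\geq \frac{\alpha}{2}\norm{\chi_H}_X\geq \frac{\alpha}{2}\varphi_X^{\min}(1).
\end{equation*}
Admissibility of $\norm{\cdot}_X$ gives $\varphi_X^{\min}(1)>0$. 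Taking the infimum over $E$ yields $\delta_Xf(n)\geq \frac{\alpha}{2}\varphi^{\min}_X(1)>0$ for every $n$, which contradicts $\delta_Xf(n)\to 0$.

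The only point needing care is the first step: showing that the level set has infinite measure, rather than merely that $f$ exceeds $\alpha/2$ somewhere outside each set of finite measure. This is precisely where the infimum in the definition of $\delta_{L^\infty}$ is used. Admissibility is the essential assumption in the second step; Example~\ref{non_admissibility} shows that without it the unit-measure pieces of $G$ could have arbitrarily small norm, and the argument would fail.
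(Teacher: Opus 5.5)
Your proof is correct, but it takes a different and more elementary route than the paper.

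The paper derives the statement from the amalgam machinery. Remark~\ref{equivalent_quasinorm} and Theorem~\ref{extremal_embeddings}(ii) give $X\hookrightarrow\GWL(X,L^\infty)$, hence $\delta_{\GWL(X,L^\infty)}f(n)\le C\,\delta_Xf(n)$. A short computation then shows $\delta_{\GWL(X,L^\infty)}f(n)\ge\delta_{L^\infty}f(n+1)$. This yields the termwise inequality $\delta_{L^\infty}f(n+1)\le C\,\delta_Xf(n)$. However, the constant $C$ is non-explicit, because the embedding rests on Theorem~\ref{amalgam_embeddings}, Proposition~\ref{global_properties}, the Fatou regularisation in Corollary~\ref{gwl_quasinormability}, and the automatic continuity of Theorem~\ref{continuous_embeddings}.

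Your argument avoids all of this. It uses only (P2), homogeneity, nonatomicity and admissibility; it is essentially the mechanism of Lemma~\ref{cut_to_bound}, applied at the level $\alpha/2$. It also gives the explicit bound $\delta_Xf(n)\ge\frac{\alpha}{2}\varphi^{\min}_X(1)$.

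In contrapositive form your argument only compares the two limits, not individual terms. The same idea run directly recovers and sharpens the paper's inequality. If $\mu(E)\le n$ and $s>\norm{f\chi_{R\setminus E}}_X/\varphi^{\min}_X(1)$, admissibility forces $\mu(\{\abs{f}\chi_{R\setminus E}>s\})<1$. Removing this set as well shows $\delta_{L^\infty}f(n+1)\le\delta_Xf(n)/\varphi^{\min}_X(1)$.
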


\begin{proof}
    By Remark~\ref{equivalent_quasinorm} and Theorem~\ref{extremal_embeddings} there exists a constant $C>0$ such that 
    \begin{equation*}
        \norm{f}_{\GWL(X,L^\infty)}\leq C\norm{f}_X.
    \end{equation*}
    From this it follows that for every $n\in \N$,
    \begin{equation*}
        \delta_{\GWL(X,L^\infty)}f (n) \leq C \delta_Xf(n).
    \end{equation*}
    Now, we calculate that for every $n\in\N$,
    \begin{equation*}
        \begin{aligned}
            \delta_{\GWL(X,L^\infty)}f (n)&= \inf_{\substack{E\subset R \\ \mu(E)=n}}\left(\norm{f\chi_{R\setminus E}}_{X_{\loc}} + \norm{f\chi_{R\setminus E}}_{L^\infty_{\glob}}\right) \\
            &\geq \inf_{\substack{E\subset R \\ \mu(E)=n}}\norm{f\chi_{R\setminus E}}_{L^\infty_{\glob}} \\
            &= \inf_{\substack{E\subset R \\ \mu(E)=n}} \inf_{\substack{G\subset R \\ \mu(G)=1}} \norm{f\chi_{R\setminus E}\chi_{R\setminus G}}_{L^\infty} \\
            &\geq \inf_{\substack{E\subset R \\ \mu(E)\leq n+1}} \norm{f\chi_{R\setminus E}}_{L^\infty} \\
            &=\delta_{L^\infty} f (n+1).
        \end{aligned}
    \end{equation*}
    The conclusion now follows from these inequalities.
\end{proof}

We now present a natural characterization of absolute continuity of the norm in Wiener--Luxemburg amalgam spaces. Compared to Theorem~\ref{ac_char}, we only need to investigate the local absolute continuity of the norm in the local space $A$ and the global absolute continuity of the norm in the global space $B$.

\begin{thm}\label{amalgam_ac}
    Let $\norm{\cdot}_A$, $\norm{\cdot}_B$ be admissible quasi-Banach function norms and $A$, $B$ their corresponding quasi-Banach function spaces. Let $f\in\GWL(A,B)$. Then $f\in\GWL(A,B)_a$ if and only if
    \begin{equation}\label{sigma_limit}
        \lim_{n\rightarrow\infty} \sigma_Af\left(n^{-1}\right) = 0,
    \end{equation}
    and
    \begin{equation}\label{delta_limit}
        \lim_{n\to \infty}\delta_B f(n)=0.
    \end{equation}
\end{thm}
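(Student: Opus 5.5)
The plan is to reduce the statement to Theorem~\ref{ac_char} applied to the amalgam space itself, and then to identify the local and global quantities of $\GWL(A,B)$ with those of $A$ and $B$, respectively. By Corollary~\ref{gwl_quasinormability}, $\GWL(A,B)$ is a quasi-Banach function space with the quasi-Banach function norm $\norm{\cdot}_{\fat(\GWL(A,B))}\approx\norm{\cdot}_{\GWL(A,B)}$. Both absolute continuity of the norm of $f$ and the conditions
\begin{equation*}
\sigma f(n^{-1})\to 0, \qquad \delta f(n)\to0
\end{equation*}
are stable under replacing a quasinorm by an equivalent one, because $\sigma$ and $\delta$ scale by the same equivalence constants. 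So Theorem~\ref{ac_char} (applied to the Fat norm) gives: $f\in\GWL(A,B)_a$ if and only if
\begin{equation*}
\sigma_{\GWL(A,B)}f(n^{-1})\to0 \quad\text{and}\quad \delta_{\GWL(A,B)}f(n)\to0,
\end{equation*}
where $\sigma_{\GWL(A,B)}$ and $\delta_{\GWL(A,B)}$ are built from the functional $\norm{\cdot}_{\GWL(A,B)}$ directly. The main point to check carefully here is that Theorem~\ref{ac_char} is legitimately transferred through this equivalence; I expect this to be routine.

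\emph{Local part.} For $\mu(E)\le n^{-1}\le1$ we have $\norm{f\chi_E}_{B_{\glob}}=0$, since one may remove $E$ itself. Moreover, $\norm{f\chi_E}_{A_{\loc}}=\sup_{F\subset E}\norm{f\chi_F}_A=\norm{f\chi_E}_A$ by (P2). Taking the supremum over such $E$ yields the exact identity $\sigma_{\GWL(A,B)}f(n^{-1})=\sigma_Af(n^{-1})$. This turns the first condition of Theorem~\ref{ac_char} into~\eqref{sigma_limit}.

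\emph{Global part, necessity.} For every $E$ with $\mu(E)\le n$ we have $\norm{f\chi_{R\setminus E}}_{B_{\glob}}\ge\delta_Bf(n+1)$, because removing first $E$ and then a set of measure at most $1$ removes a set of measure at most $n+1$. Hence $\delta_Bf(n+1)\le\delta_{\GWL(A,B)}f(n)\to0$, and~\eqref{delta_limit} follows.

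\emph{Global part, sufficiency.} This is the expected obstacle, since the local $A$-part of the tail must also be made small, and~\eqref{delta_limit} only controls $B$. Here I use the admissibility of $B$ through Proposition~\ref{linfinity_delta}: it gives $\delta_{L^\infty}f(n)\to0$. Given $\varepsilon>0$ and $n$, I pick $G_n$ with $\mu(G_n)\le n$ and $\norm{f\chi_{R\setminus G_n}}_{L^\infty}<\delta_{L^\infty}f(n)+\varepsilon$, and $H_n$ with $\mu(H_n)\le n$ and $\norm{f\chi_{R\setminus H_n}}_B<\delta_Bf(n)+\varepsilon$. With $E=G_n\cup H_n$, which has $\mu(E)\le2n$, I estimate
\begin{equation*}
\norm{f\chi_{R\setminus E}}_{A_{\loc}}\le\varphi_A^{\max}(1)\norm{f\chi_{R\setminus G_n}}_{L^\infty},
\end{equation*}
which is finite by Theorem~\ref{lenka4.4}, and
\begin{equation*}
\norm{f\chi_{R\setminus E}}_{B_{\glob}}\le\norm{f\chi_{R\setminus H_n}}_B.
\end{equation*}
Therefore
\begin{equation*}
\delta_{\GWL(A,B)}f(2n)\le\varphi_A^{\max}(1)\bigl(\delta_{L^\infty}f(n)+\varepsilon\bigr)+\delta_Bf(n)+\varepsilon.
\end{equation*}
Since $\delta_{\GWL(A,B)}f$ is nonincreasing, this gives $\delta_{\GWL(A,B)}f(n)\to0$, and Theorem~\ref{ac_char} concludes the proof.
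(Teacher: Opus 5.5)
Your proposal is correct and follows essentially the same route as the paper. Both arguments reduce to Theorem~\ref{ac_char} for the amalgam, use the identity $\sigma_{\GWL(A,B)}f(n^{-1})=\sigma_Af(n^{-1})$ and the bound $\delta_{\GWL(A,B)}f(n)\ge\delta_Bf(n+1)$, and prove the converse by combining Proposition~\ref{linfinity_delta} (via admissibility of $B$) with the union of an $L^\infty$-tail set and a near-optimal $B$-set. The differences are cosmetic: your sets have measure at most $2n$ instead of the paper's $n+m(n)$, and you make explicit the routine transfer of Theorem~\ref{ac_char} through the equivalence with $\norm{\cdot}_{\fat(\GWL(A,B))}$, which the paper leaves implicit.
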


\begin{proof}
    By Theorem~\ref{ac_char} we need to show that~\eqref{sigma_limit} and~\eqref{delta_limit} hold if and only if both
    \begin{equation*}
        \lim_{n\rightarrow\infty} \sigma_{\GWL(A,B)}f\left(n^{-1}\right) = 0,
    \end{equation*}
    and
    \begin{equation*}
        \lim_{n\to \infty}\delta_{\GWL(A,B)} f(n)=0.
    \end{equation*}
    For the maximal local norm, we observe that for every $n\in\N$,
    \begin{equation*}
        \sigma_{\GWL(A,B)}f\left(n^{-1}\right)= \sigma_Af\left(n^{-1}\right).
    \end{equation*}
    So it only remains to show the equivalence for the minimal global norm. For one inequality, we repeat essentially the same calculation from the previous proof. That is, for every $n\in \N$,
    \begin{equation*}
        \begin{aligned}
            \delta_{\GWL(A,B)}f (n)&= \inf_{\substack{E\subset R \\ \mu(E)=n}}\left(\norm{f\chi_{R\setminus E}}_{A_{\loc}} + \norm{f\chi_{R\setminus E}}_{B_{\glob}}\right) \\
            &\geq \inf_{\substack{E\subset R \\ \mu(E)=n}}\norm{f\chi_{R\setminus E}}_{B_{\glob}} \\
            &= \inf_{\substack{E\subset R \\ \mu(E)=n}} \inf_{\substack{G\subset R \\ \mu(G)=1}} \norm{f\chi_{R\setminus E}\chi_{R\setminus G}}_{B} \\
            &\geq \inf_{\substack{E\subset R \\ \mu(E)\leq n+1}} \norm{f\chi_{R\setminus E}}_{B} \\
            &=\delta_{B} f (n+1).
        \end{aligned}
    \end{equation*}
    This shows the first implication. \par
    Now, if~\eqref{delta_limit} holds, Proposition~\ref{linfinity_delta} gives that
    \begin{equation*}
        \lim_{n\to \infty}\delta_{L^\infty} f(n)=0.
    \end{equation*}
    From here we may find a sequence of sets $E_n\subset R$, $\mu(E_n)=m(n)$, where $m\colon \N \to \N$ is a strictly increasing function, such that $\lim_n m(n)=\infty$ and
    \begin{equation*}
        \lim_{n\to \infty} \norm{f\chi_{R\setminus E_n}}_{L^\infty} = 0.
    \end{equation*}
    Now, we calculate that for every $n\in\N$,
    \begin{equation*}
        \begin{aligned}
            \delta_{\GWL(A,B)}f (n+m(n))&= \inf_{\substack{E\subset R \\ \mu(E)\leq n+m(n)}} \left(\norm{f\chi_{R\setminus E}}_{A_{\loc}} + \norm{f\chi_{R\setminus E}}_{B_{\glob}}\right) \\
            &\leq \inf_{\substack{E\subset R \\ \mu(E)=n}} \left(\norm{f\chi_{R\setminus (E\cup E_n)}}_{A_{\loc}} + \norm{f\chi_{R\setminus (E\cup E_n)}}_{B_{\glob}}\right) \\
            &\leq \norm{f\chi_{R\setminus E_n}}_{A_{\loc}} + \inf_{\substack{E\subset R \\ \mu(E)=n}} \norm{f\chi_{R\setminus E}}_{B_{\glob}} \\
            &\leq \norm{f\chi_{R\setminus E_n}}_{L^\infty} \varphi_A^{\max}(1) + \delta_B f (n).
        \end{aligned}
    \end{equation*}
    The right-hand side goes to zero, so 
    \begin{equation*}
        \lim_{n\to\infty}\delta_{\GWL(A,B)}f (n+m(n)) = 0.
    \end{equation*}
    This yields the other implication because $\delta_{\GWL(A,B)}f$ is a decreasing function.
\end{proof}

We may use our previous results to transfer this characterization back to the r.i.~context of the original Wiener--Luxemburg amalgam spaces and Theorem~\ref{ri_ac_char}. Again, note that r.i.~spaces are always admissible.

\begin{cor}
     Let $\norm{\cdot}_A$, $\norm{\cdot}_B$ be r.i.~quasi-Banach function norms and $A$, $B$ their corresponding r.i.~quasi-Banach function spaces. Let $f\in \WL(A,B)$. Then $f\in\WL(A,B)_a$ if and only if
    \begin{equation*}
         \lim_{n\rightarrow \infty} \norm{f^*\chi_{[0,n^{-1})}}_{\overline{A}} = 0,
     \end{equation*}
     and
     \begin{equation*}
        \lim_{n\rightarrow \infty} \norm{f^*\chi_{[n,\infty)}}_{\overline{B}} = 0.
     \end{equation*}
\end{cor}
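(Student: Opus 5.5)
The plan is to transfer Theorem~\ref{amalgam_ac} to the rearrangement-invariant setting. Every quantity in that theorem will be replaced by its rearrangement counterpart using the consistency results of the previous subsection. Since $A$ and $B$ are r.i., they are admissible, so Theorem~\ref{gwl=wl} gives $\norm{\cdot}_{\GWL(A,B)}\approx\norm{\cdot}_{\WL(A,B)}$. In particular the two functionals define the same set of functions, and for any sequence $E_n$ with $\chi_{E_n}\to 0$ a.e.~we have $\norm{f\chi_{E_n}}_{\WL(A,B)}\to 0$ if and only if $\norm{f\chi_{E_n}}_{\GWL(A,B)}\to 0$. Hence $\WL(A,B)_a=\GWL(A,B)_a$, and $f\in\WL(A,B)$ means $f\in\GWL(A,B)$. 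By Theorem~\ref{amalgam_ac}, $f\in\WL(A,B)_a$ is therefore equivalent to conditions~\eqref{sigma_limit} and~\eqref{delta_limit}.

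It then remains to translate these two conditions. For the local one, I will use Proposition~\ref{local_ri_equiv} with $t=n^{-1}$ (the space is nonatomic, hence resonant, and $f$ is a.e.~finite since it lies in a quasi-Banach function space). This gives the identity
\[
\sigma_Af(n^{-1})=\norm{f^*\chi_{[0,n^{-1})}}_{\overline{A}},
\]
so~\eqref{sigma_limit} is literally the first condition of the corollary.

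For the global condition, I will apply Proposition~\ref{global_norm_rearrangement} with $t=n$, which gives
\[
C_1\delta_Bf(n)\leq\norm{f^*\chi_{[n,\infty)}}_{\overline{B}}\leq C_2\,\delta_Bf(n/2).
\]
Here the constants may depend on $t$, and this is the main point needing care. Inspecting that proof, $C_1=(2C_B)^{-1}$ and $C_2=\norm{D_{1/2}}_{\overline{B}\to\overline{B}}$ are both independent of $t$. So I can pass to the limit along $n\to\infty$. Since $\delta_Bf$ is nonincreasing, $\delta_Bf(n)\to0$ is equivalent to $\delta_Bf(n/2)\to0$: the sequence $\delta_Bf(n/2)$ is sandwiched between values $\delta_Bf(\lceil n/2\rceil)$ and $\delta_Bf(\lfloor n/2\rfloor)$, up to monotonicity. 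The second condition of the corollary is therefore equivalent to~\eqref{delta_limit}.

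If one prefers not to rely on uniformity of the constants, there is an alternative. The monotone quantities $\norm{f^*\chi_{[s,\infty)}}_{\overline{B}}$ and $\delta_Bf(s)$ are comparable at fixed scales $s$ and $s/2$, and a standard argument with these monotone functions then shows that one tends to zero if and only if the other does. I expect this constant-uniformity bookkeeping to be the only real obstacle; the rest is direct substitution.
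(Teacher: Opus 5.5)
Your proposal is correct and follows the paper's route. The paper's proof simply cites Propositions~\ref{local_ri_equiv} and~\ref{global_norm_rearrangement} together with Theorem~\ref{amalgam_ac}, leaving implicit both the identification of $\WL(A,B)_a$ with $\GWL(A,B)_a$ (your use of Theorem~\ref{gwl=wl}) and the fact that the constants $C_1=(2C_B)^{-1}$ and $C_2=\norm{D_{1/2}}_{\overline{B}\to\overline{B}}$ in Proposition~\ref{global_norm_rearrangement} do not depend on $t$, which you correctly extract from its proof.

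Drop your fallback paragraph, though. With $t$-dependent constants, monotonicity alone would not give the equivalence. For example, the nonincreasing functions $\delta(s)\equiv 1$ and $N(s)=1/s$ satisfy $s^{-1}\delta(s)\le N(s)\le \delta(s/2)$, yet $N\to 0$ while $\delta\not\to 0$. So the uniformity you verified is genuinely needed.
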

\begin{proof}
    This follows from Propositions~\ref{local_ri_equiv},~\ref{global_norm_rearrangement} and Theorem~\ref{amalgam_ac}.
\end{proof}

\bibliographystyle{dabbrv}
\bibliography{bibliography}
\end{document}